\newcolumntype{C}[1]{>{\centering\let\newline\\\arraybackslash\hspace{0pt}}m{#1}}
\renewcommand\appendix{\par
\setcounter{section}{0}%
\setcounter{subsection}{0}%
\setcounter{table}{0}
\setcounter{table}{0}
\setcounter{figure}{0}
\gdef\thetable{\Alph{table}}
\gdef\thefigure{\Alph{figure}}
\gdef\thesection{\Alph{section}}
\setcounter{section}{0}}
\newtheorem{theorem}{Theorem}[section]
\newtheorem{lemma}[theorem]{Lemma}
\newtheorem{proposition}[theorem]{Proposition}
\newtheorem{definition}[theorem]{Definition}
\newcommand{\corr}[1]{{#1}}
\newcommand{\corrs}[1]{\textcolor{black}{#1}}
\newcommand{\p}{^\prime}
\newcommand{\pp}{^{\prime\prime}}
\newcommand{\de}{_\delta}
\newcommand{\phiqr}{\phi_{\gamma+\delta}}
\newcommand{\Aqr}{A_{\gamma,\delta}}
\newcommand{\Auqr}{A_{u,\gamma,\delta}}
\newcommand{\Bqr}{B_{\gamma,\delta}}
\newcommand{\Kqr}{K_{\gamma,\delta}}
\newcommand{\Lqr}{L_{\gamma,\delta}}
\newcommand{\Cqr}{C_{\gamma,\delta}}
\newcommand{\Wq}{W_{\delta}}
\newcommand{\Zq}{Z_{\delta}}
\newcommand{\Zqr}{Z_{\gamma,\delta}}
\newcommand{\Jqr}{J_{\gamma,\delta}}
\newcommand{\Hqr}{H_{\gamma,\delta}}
\newcommand{\pis}{{\pi_{b_u,b_l}^{\kappa,s}}}
\newcommand{\pio}{{\pi_{b_u,b_l}^{\kappa,*}}}
\newcommand{\pioo}{{\pi_{b_u^{(1)},b_l^{(1)}}^{\kappa,*}}}
\newcommand{\piot}{{\pi_{b_u^{(2)},b_l^{(2)}}^{\kappa,*}}}
\newcommand{\Vs}{{V_s}}
\newcommand{\Vo}{{V_*}}
\newcommand{\Voo}{V_\kappa^\prime(0;\pi^{\kappa,s}_{b_u,0})}
\newcommand{\Prm}{{\mathcal{N}(ds\times dz)}}
\newcommand{\limep}{{\lim_{\epsilon\downarrow 0}}}
\newcommand{\limtn}{{\lim_{t,n\uparrow\infty}}}
\newcommand{\intte}{{\int_{0+}^{t\wedge T_n}e^{-\delta s}}}
\newcommand{\xps}{{X^\pi(s-)}}
\newcommand{\sumte}{{\sum_{0<s<t\wedge T_n}e^{-\delta s}}}
\begin{document}

\normalem 



\begin{frontmatter}

\title{Optimal periodic dividend strategies  for spectrally positive L\'evy risk processes with fixed transaction costs}

\author[UM]{Benjamin Avanzi}
\ead{b.avanzi@unimelb.edu.au}

\author[UNSW]{Hayden Lau\corref{cor}}
\ead{kawai.lau@unsw.edu.au}

\author[UNSW]{Bernard Wong}
\ead{bernard.wong@unsw.edu.au}

\cortext[cor]{Corresponding author.}

\address[UM]{Centre for Actuarial Studies, Department of Economics \\ University of Melbourne VIC 3010, Australia}
\address[UNSW]{School of Risk and Actuarial Studies, UNSW Australia Business School\\ UNSW Sydney NSW 2052, Australia}


\begin{abstract}
We consider the general class of spectrally positive L\'evy risk processes, which are appropriate for businesses with continuous expenses and lump sum gains whose timing and sizes are stochastic. Motivated by the fact that dividends \corr{cannot be} paid \corr{at any time} in real life, we study \emph{periodic} dividend strategies whereby dividend decisions are made according to a \corr{separate} arrival process. 

In this paper, we investigate the impact of fixed transaction costs on the optimal periodic dividend strategy, and show that a periodic $(b_u,b_l)$ strategy is optimal \corr{when decision times arrive according to an independent Poisson process}. Such a strategy leads to lump sum dividends that bring the surplus back to $b_l$ as long as it is no less than $b_u$ at a dividend decision time. The expected present value of dividends (net of transaction costs) is provided explicitly with the help of scale functions. Results are illustrated.


\end{abstract}

\begin{keyword}
Optimal dividends\sep Periodic dividends\sep Dual risk model\sep Fixed transaction costs\sep SPLP

JEL codes: 
C44 \sep 
C61 \sep 
G24 \sep 
G32 \sep 
G35 


\end{keyword}

\end{frontmatter}

\newtheorem{remark}{Remark}[section]
\numberwithin{equation}{section}

\section{Introduction} \label{S_intro}

The literature on the stability problem \citep*[see, e.g.][]{Buh70} is prolific. One possibility criterion for stability is the expected present value of dividends, first proposed by \citet{deF57}. A dividend strategy defines when and how much dividends should be paid, and its optimal version is \corr{(often)} the one that maximises the expected present value of dividends \citep*[see for instance][]{AlTh09}.

 Stylised models of an insurance company were first studied by pioneers such as \citet*{Lun09,Cra30,Bor67}. They considered the specific case of insurance, where income is relatively certain (premiums are determined in advance) and outflows (mainly insurance claims) are random. In this paper, we consider the class of spectrally positive L\'evy processes, whereby expenses are continuous and more certain (although still potentially perturbed by diffusion), and where the stochastic behaviour of the surplus process is on the upside, that is, gains happen at random times and with random amounts \citep*{BaKyYa12, PeYa16}. This model is sometimes referred to as ``dual model'' because it is \emph{dual} to the insurance model briefly described above \citep*[see, e.g.][]{MaRu04}. Such a model is obviously relevant to most risky business, but particularly so for commission-based businesses, pharmaceutical companies, petroleum companies \citep*{AvGeSh07}, and also for valuing venture capital investments \citep*{BaEg08}. \citet*{ChWo17} further discuss the relevance of the model, as well as its connection with queuing models. 

\corr{When dividends can be paid at any time (which we also refer to as "continuous decision making"), optimal dividend strategies for the dual model were determined in general by \citet*[without, and with fixed transaction costs, respectively]{BaKyYa12,BaKyYa13}. On the other hand, ``periodic'' dividends as introduced by \citet*{AlChTh11a} were first considered in the dual model by \citet*{AvChWoWo13}, with optimality results in \citet*{AvTuWo16,PeYa16}. When the surplus is a spectrally negative L\'evy process (with a completely monotone L\'evy density), \citet{Loe08,Loe09} showed that a barrier strategy implemented in continuous time is optimal with and without fixed transaction costs, respectively. \citet*{NoPeYaYa17} extended the result in \citet*{Loe08a} in a periodic setting (without fixed transactions costs) and showed that such class of strategy if implemented in periodic time is also optimal in the periodic setting.}

\corr{In this paper, we focus on ``periodic'' dividends, and consider \emph{fixed} transaction costs in a spectrally positive L\'evy risk process. While proportional transaction costs affect the level of the optimal barrier, they do not change results qualitatively, which makes sense as they can be interpreted as a simple change of currency. We} show that \corr{fixed transaction costs} lead to a split barrier being optimal. This again mirrors analogous results in a continuous decision making framework; see e.g. \citet*{YaYaWa11}. We further illustrate numerically that our result is consistent with \citet*{BaKyYa13} and \citet*{PeYa16} when the frequency of the dividend payment time goes to infinity and the fixed transaction costs are reduced to zero, respectively.

The paper is organised as follows. In Section \ref{section.the.model}, we define the mathematical model, whereas in Section \ref{section.candidate.strategy} we define more rigorously what the class of $(b_u,b_l)$ strategies is. We then briefly review in Sections \ref{section.definition.scale.function} and \ref{section.prelim.SPLP} scale functions and related results which  will be needed later in the paper. Section \ref{section.verification.lemma} gives a sufficient condition for a strategy to be optimal (verification lemma). Section \ref{section.value.bubl} computes the value function of a given periodic $(b_u,b_l)$ strategy. Following that, Section \ref{section.smoothness} studies the smoothness condition of the value function, which is the first step to choose our candidate strategy, where Section \ref{section.secondstep} elucidates the second step to choose our candidate strategy, which involves the derivative of the value function at the lower barrier. \corr{At the end of Section \ref{S.8} it is shown that the 2 conditions proposed in Sections \ref{section.smoothness} and \ref{section.secondstep} regarding the parameters $b_u$ and $b_l$ can always be satisfied (existence).} Section \ref{section.strategy.optimal} confirms that the candidate we constructed in Sections \ref{section.smoothness} and \ref{section.secondstep} is indeed optimal (using the verification lemma of Section \ref{section.verification.lemma}), and that it is unique. Finally, numerical illustrations are provided in Section \ref{section.illustration}. Section \ref{section.conclusion} concludes.

\section{The model}\label{section.the.model}
In this paper we use a standard set-up for stochastic processes \citep*[e.g.][Chapter 0]{Ber98}. We first define a spectrally negative L\'evy process $Y=\{Y(t);t\geq 0\}$. It is well known that the law of a L\'evy proccess can be uniquely characterised by its characteristic exponent. For $Y$, its Laplace exponent is given by 
\begin{equation}\label{def.snlp.1}
\mathbb{E}[e^{\theta Y(t)}]=e^{t\psi(\theta)}, \quad Y(0)=0,
\end{equation}
and
\begin{equation}\label{def.snlp.2}
\psi(\theta) = \psi_Y(\theta)= c\theta+\frac{\sigma^2}{2}\theta^2+\int_{(0,\infty)}(e^{-\theta s}-1+\theta s 1_{\{s<1\}})\Pi(ds),
\end{equation}
with
\begin{equation}\label{def.snlp.3}
\int_{(0,\infty)}(1\wedge z^2)\Pi(dz)<\infty,
\end{equation}
where $(c,\sigma,\Pi)$ are the L\'evy triplet of $Y$. In order to avoid trivial cases, we also require that $Y$ does not have a monotonic path. We then construct a spectrally positive L\'evy process $X$ starting at $x\in\mathbb{R}$ (initial surplus) as
\begin{align}
X(t)=x-Y(t),~t\geq 0,
\end{align}
that is, we shift the process $-Y$ upwards by $X(0):=x$ units. We denote its law by $\mathbb{P}_x$ and the mathematical expectation operator related to it as $\mathbb{E}_x[\cdot]$. Next, we define periodic dividend decision times (the time where one has to decide how much to pay and the payment occurs \corrs{instantaneously}), or in short decision times. Decision times are the times when the Poisson process \corr{(independent of $X$)} with rate $\gamma$, $N_\gamma(t)$, has increments, i.e. the set $\mathbb{T}:=\{T_i;i\in\mathbb{N}\}$, with
\begin{equation}
T_i=\inf\{t:N_\gamma(t)=i\},
\end{equation}
\corr{where throughout this paper we adopt the convention 
	\begin{equation}\label{eq.convention.inf.empty}
	\inf\emptyset=\infty.
	\end{equation} }
In words, it means that the $i$-th decision time, $T_i$, corresponds to the time when $N_\gamma$ jumps from $i-1$ to $i$. Let $\mathbb{F}:=\{\mathcal{F}(t);t\geq 0\}$ be the filtration generated by the process $(X,N_\gamma)$. Then, a periodic dividend strategy (defined by the cumulative dividends paid) $\pi:=D^\pi=\{D^\pi(t);t\geq 0\}$ is a non-decreasing, right-continuous and $\mathbb{F}$-adapted process, which admits the form
\begin{equation}\label{periodic.dividend.form}
 D^\pi(t) = \int_{[0,t]}\nu^\pi(s)dN_\gamma(s),~t\geq 0,~\corr{\text{with}~D^\pi(0)=0.}
\end{equation}

Hence, the dividend amount paid at $T_i$ is $\xi^\pi_i:=\nu^\pi(T_i)$ (the increment of $D^\pi$ at $T_i$) and the strategy $\pi$ can also be specified in terms of $\{\xi^\pi_i;i\in\mathbb{N}\}$. The modified surplus $X^\pi=\{X^\pi(t);t\geq 0\}$ is defined as
\begin{align}
X^\pi(t)=X(t)-D^\pi(t),
\end{align}
where the ruin time $\tau^\pi$ is defined as
\begin{align}
	\tau^\pi=\inf\{t\geq 0:X^\pi(t)< 0\},
\end{align}
the instant that the modified surplus goes below $0$ for the first time.

We now introduce the constraints for the periodic dividend strategy. Intuitively, given that a fixed transaction cost $\kappa>0$ is incurred on each dividend payment, the (gross) amount of dividend should be large enough to pay the transaction cost, i.e.
\begin{equation}\label{Pikappa}
\xi_i^\pi \geq\kappa~\mbox{if}~\xi^\pi_i\neq 0.
\end{equation}
This holds naturally (see property 5 in Remark \ref{remark.different.kappa.on.v} below). Since we are not allowed to inject capital into the surplus, and since a dividend payment cannot exceed the current surplus, we have the following restrictions:
\begin{equation}
X^\pi(T_i)\geq 0,~T_i\leq \tau^\pi,
\end{equation}
which translates to 
\begin{equation}
0\leq \xi^\pi_i\leq X^\pi(T_i-)~\forall~i\in\mathbb{N},
\end{equation}
as the jump times of $X$ and the periodic decision times are distinct with probability $1$.

We can see from the above definition that, at a decision time, a decision to not pay any dividend is also allowed. In this case,  no transaction cost is incurred. It is also possible that a dividend payment can cause ruin, which refers to liquidation of the company, i.e. the company chose to close its business by distributing all the available surplus (at its first opportunity). This strategy is called a \emph{liquidation-at-first-opportunity} strategy. We denote the set of all admissible strategies $\Pi$ and define $\Pi_\kappa$ the set of all admissible strategies such that (\ref{Pikappa}) holds.

Lastly, we introduce the time preference parameter $\delta>0$. The value function of a strategy $\pi,~\pi\in\Pi$ with initial surplus $x$ is denoted as, $V_{\kappa}(x;\pi)$. We define
\begin{align}
V_{\kappa}(x;\pi):=\mathbb{E}_x\left[\sum_{i=1}^{\infty}e^{-\delta T_i}(\corr{\xi_i^\pi}-\kappa)1_{\{\xi_i>0\}}1_{\{T_i\leq\tau^\pi\}}\right].\label{def.value.strategy}
\end{align}

Note that we have $X^\pi(\tau^\pi)=0$ and $V_{\kappa}(0;\pi)=0$ for all strategies $\pi\in\Pi$ since ruin at $t=0+$ with $X(0)=0$ is certain for a spectrally positive L\'evy process \corr{with no monotonic paths}. Our goal is to find an optimal strategy $\pi^*_{\kappa}$ (if it exists) such that
\begin{align}\label{eqt.optimal}
&V_{\kappa}(x;\pi_{\kappa}^*)=v_{\kappa}(x):=\sup_{\pi\in\Pi} V_{\kappa}(x;\pi).
\end{align}

\begin{remark}\label{remark.different.kappa.on.v}
	From the definitions of $V_{\kappa}$ and $\Pi_\kappa$, we have for any $0\leq\kappa_1\leq \kappa_2$
	\begin{enumerate}
		\item $\pi\in\Pi_{\kappa_2}\implies\pi\in\Pi_{\kappa_1}$ and
		\item $\pi\in\Pi_{\kappa_2}\implies V_{\kappa_1}(x;\pi)\geq V_{\kappa_2}(x;\pi)$ for all $x\geq 0$, and
		\item $v_{\kappa_1}(x)\geq v_{\kappa_2}(x)$ for all $x\geq 0$, and
		\item $V_{\kappa}(x;\pi)\geq 0$ for all $x\ge 0$ and $\pi \in\Pi_\kappa $, and 
		\item $v_{\kappa}(x)=\sup_{\pi\in\Pi_\kappa} V_{\kappa}(x;\pi)$.
	\end{enumerate}
\end{remark}
\begin{proof}[Proof of 5]
	Note this property justifies our statement just after \eqref{Pikappa}. To prove this property, it suffices to show that all strategies in $\Pi\backslash\Pi_\kappa$ can be outperformed by the strategies in $\Pi_\kappa$. If a strategy $\pi$ is in $\Pi\backslash\Pi_\kappa$, there are some dividend \corr{payments} $\xi_i$ smaller than $\kappa$. Those will contribute a negative value to the value function. By choosing not to pay dividends at those dividend decision \corr{times}, call it strategy $\pi_\kappa$, we can remove those negative \corr{contributions} while having a higher surplus level at those times, resulting in a smaller probability of ruin, or $X^\pi(t)\leq \corr{X^{\pi_{\kappa}}(t)}$ for all $t\leq \tau^\pi$ and $\xi^\pi_i-\kappa\leq \xi^{\pi_{\kappa}}_i-\kappa$ for all $i\in\mathbb{N}$. Thus, we have $V_{\kappa}(x;\pi)\leq V_{\kappa}(x;\pi_\kappa)$. Finally we note that $\pi_\kappa$ either pays dividends above or equal to $\kappa$, or does not pay any dividend, thus is inside the set $\Pi_\kappa$.
\end{proof}

Thanks to the fifth property in Remark \ref{remark.different.kappa.on.v}, it suffices to consider only the strategies in $\Pi_\kappa$. Therefore, in the remaining of this paper, we restrict ourselves to strategies in $\Pi_{\kappa}$.

\section{Candidate strategy}\label{section.candidate.strategy}
Inspired by the form of optimal strategies in the literature, e.g. \citet*{BaKyYa13}, \citet*{PeYa16}, \citet*{Loe08a} and \citet*{NoPeYaYa17}, we conjecture that an optimal  periodic strategy will be of the form $(b_u,b_l)$, as defined in Definition \ref{D_bubl} and illustrated in Figure \ref{fig.bubl}.

\begin{definition}[Periodic ($b_u,b_l$) strategy]\label{D_bubl}
	A periodic $(b_u,b_l) $ strategy with $0\leq b_l\leq b_u$ is the strategy that pays $x-b_l$ whenever the surplus $x$ is above or equal to $b_u$, at decision times. This reduces the surplus level to $b_l$.
	
	\begin{figure}[H]
		\centering
		\includegraphics[width=0.7\textwidth]{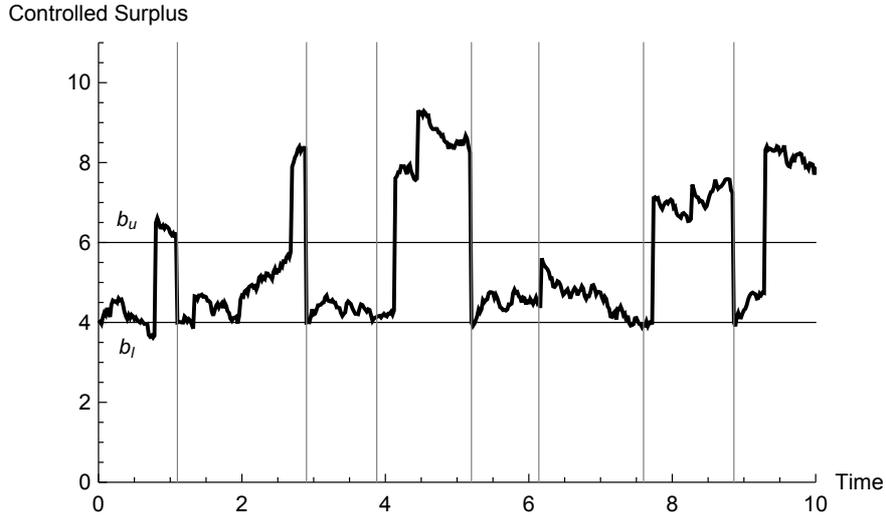}
		\caption{An illustration of a periodic $(b_u,b_l)$ strategy. The vertical lines represent the (Poissonian) dividend decision times.}
		\label{fig.bubl}
	\end{figure}
\end{definition}

By denoting the strategy as $\pi_{b_u,b_l}$, we have
\begin{align}\label{divpis}
	\xi_i^{\pi_{b_u,b_l}}=[X^{\pi_{b_u,b_l}}(T_i-)-b_l]1_{\{X^{\pi_{b_u,b_l}}(T_i-)\geq b_u\}}.
\end{align}
Clearly, we have 
\begin{equation*}
\pi_{b_u,b_l}\in\Pi_\kappa\iff b_u-b_l\geq\kappa.
\end{equation*}

\begin{remark}
	Similarly, a periodic barrier strategy at barrier level $b>0$, denoted as $\pi_{b}$, is defined as
	\begin{align}\label{divpbs}
		\xi_i^{\pi_b}=[X^{\pi_{b}}(T_i-)-b]1_{\{X^{\pi_b}(T_i-)\geq b\}}.
	\end{align}
\end{remark}

\section{Definition of scale functions}\label{section.definition.scale.function}

This section gives definitions of scale functions for our purpose, that is, to calculate the value function of a periodic $(b_u,b_l)$ strategy. The central idea of scale function is based on path decomposition of L\'evy processes, i.e. Wiener-Hopf factorisation and It\^o's excursion theory. Interested readers can refer to good textbooks including \citet*{Ber98} and \citet*{Kyp06}. Additional references regarding the general theories of spectrally negative L\'evy processes includes \citet*{ChDo05}, \citet*{LoReZh14}, \citet*{PaPeRi15} and \citet*{PeYa16b}, where some useful identities are also available. Examples of the applications of fluctuation theories include \citet*{Fur98}, \citet*{AvPaPi07}, \citet*{ChYaZh17P} and \citet*{YaYaZh13}. Recently, \citet*{AvGrVa17} summarised the identities and the applications of fluctuation theories on risk theory. 

The $q$-scale function, $W_q$, for $x\geq0$, $q\geq 0$ is defined through the inverse Laplace transform of $1/(\psi(\theta)-q)$, i.e.
\begin{equation*}
\int_{0}^{\infty}e^{-\theta x}W_q(x)dx=\frac{1}{\psi(\theta)-q},~~\theta>\phi_q,
\end{equation*}
where 
\begin{equation*}
\phi_q = \sup \{s:\psi(s)=q\}.
\end{equation*}
Moreover, for $x\geq0$ the ``tilted'' $q$-scale function is defined as
\begin{equation}
Z_q(x,\theta)=e^{\theta x}\left(1-(\psi(\theta)-q)\int_{0}^{x}e^{-\theta y}W_q(y)dy\right).
\end{equation}
In particular, we also define for $r>0$
\begin{align}
Z_{r,q}(x):=~&Z_q(x,\phi_{r+q})\corr{~=r\int_0^\infty e^{-\phi_{r+q}u}W_{q}(x+u)du},\label{Zqr.2}\\
Z_q(x):=~&Z_q(x,0)=1+q\int_{0}^{x}W_q(y)dy,
\end{align} 
\corr{where the equality in \eqref{Zqr.2} is due to $\int_0^\infty e^{-\phi_{r+q} u} W_q(u)du=1/r$.}

For $x<0$, we define 
\begin{align}
W_q(x)&=0,\label{scale.fcn.neg.beg}\\
Z_q(x)&=1,\\
Z_{r,q}(x)&=e^{\phi_{r+q} x},\\
{\overline{Z}}_q(x)&=x.\label{scale.fcn.neg.end}
\end{align}
We also define the integral of functions by adding an overhead line to them, such that 
\begin{align*}
{\overline{W}}_q(x)&=\int_{0}^{x}W_q(y)dy,\\
{\overline{\overline{W}}}_q(x)&=\int_{0}^{x}{\overline{W}}_q(y)dy,\\
{\overline{Z}}_q(x)&=\int_{0}^{x}Z_q(y)dy,\nonumber
\end{align*}
\corr{where we also define $W_q(x)=0$ for $x<0$.}

In addition, we also define $\mu :=-\psi'(0+)<\infty$ and 
\begin{align}
J_{r,q}(x) :=& \frac{q}{r+q}Z_{r,q}(x)+\frac{r}{r+q}Z_q(x),\\
H_{r,q}(x) :=& \frac{r}{r+q}\left( \bar{Z}_q(x)-\frac{\mu}{q}\right).
\end{align}


\section{Preliminaries: \corr{results on periodic barrier strategies}}\label{section.prelim.SPLP}

In this section, we list some useful results \corr{related to periodic barrier strategies} which will be used in later sections. First, we present the following identity extracted from Equation (5) in \citet*{AlIvZh16}: for $0\leq x\leq y$,
\begin{equation}
\mathbb{E}(e^{-\delta \tau^-_{Y,0,x} +\phiqr Y(\tau^-_{Y,0,x})};\tau^-_{Y,0,x}<\tau^+_{Y,y,x})=\Zqr(x)-W_\delta(x)\frac{\Zqr(y)}{W_\delta(y)},\label{ineq.Zqr.W}
\end{equation}
where $\tau^-_{Y,a,x} = \inf\{t\geq0:x+Y(t)<a\}$ and $\tau^+_{Y,a,x}=\inf\{t\geq0:x+Y(t)>a\}$ for any $a\in\mathbb{R}$. Hence, we have 
\begin{equation}\label{ineqt.Z.W}
\frac{\Zqr(x)}{\Wq(x)}\geq \frac{\Zqr(y)}{\Wq(y)},~0\leq x\leq y,
\end{equation}
\corr{which} is analogous to $\Wq(x)/\Wq^\prime(x)\geq \Wq(y)/\Wq^\prime(y)$ in \citet*{BaKyYa13}.

\corr{
In addition, using existing results in scale function as in Appendix \ref{Appendix.Formula.ZqrZq}, one can deduce that
\begin{equation}
\frac{\partial}{\partial x}\frac{\Zqr(x)}{\Zq(x)}> 0.\label{eqt.ZqrZq.increasing}
\end{equation}
}

From \citet*{PeYa16}, we know that
\begin{equation}
V_{0}(x;\pi_b)=\frac{\Hqr(b)}{\Jqr(b)}\Jqr(b-x)-\Hqr(b-x),~x\geq 0,~b>0,
\end{equation}
and that
\begin{equation}
v_{0}(x)=\frac{\Hqr(b^*)}{\Jqr(b^*)}\Jqr(b^*-x)-\Hqr(b^*-x),~x\geq 0, \label{eqt.v0rho}
\end{equation}
where $b^*$ is a periodic barrier, and is the unique solution of 
\begin{equation}\label{eqt.Q}
Q(b):=\frac{\Hqr(b)}{\Jqr(b)}+\frac{1}{\phiqr}=0,
\end{equation}
given that $Q(0)<0$. Otherwise, $b^*=0$. In addition, if $b^*>0$, $Q(x)<0$ (resp. $Q(x)>0$) if $x<b^*$ (resp. $x>b^*$). 
\corrs{Hence, we can deduce that for $b^*>0$
	\begin{equation}\label{V0bstar}
	V_0(b^*;\pi_{b^*})= \frac{\gamma}{\gamma+\delta}\frac{\mu}{\delta}-\frac{1}{\phiqr}.
	\end{equation}
}
	Furthermore, we have 
\begin{equation}\label{Eqt.VdEq1bstar}
V_0^\prime(b^*;\pi_{b^*})\leq 1.
\end{equation}

To supplement the results in \citet*{PeYa16}, we establish the following \corr{results}:
\begin{lemma}\label{lem.vprime.geq0}It holds that
\begin{equation}\label{eqt.vprime.geq0}
V_{0}^\prime(x;\pi_b)\geq 0,~x\geq 0.
\end{equation}
\end{lemma}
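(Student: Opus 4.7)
My plan is to establish the stronger statement that $V_0(\cdot;\pi_b)$ is non-decreasing on $[0,\infty)$; since the explicit formula for $V_0(x;\pi_b)$ recalled at the start of Section \ref{section.prelim.SPLP} is continuously differentiable in $x$ (its derivative is expressible in terms of $Z_\delta$ and $Z_{\gamma,\delta}$, both continuous), the desired inequality \eqref{eqt.vprime.geq0} follows at once from this monotonicity.

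To prove monotonicity I would use a pathwise coupling argument. Fix $0\leq x_1\leq x_2$ and, on a common probability space, construct two surplus processes $X_j(t)=x_j-Y(t)$, $j=1,2$, driven by the same spectrally negative L\'evy process $Y$ with dividend decisions taken at a common sequence of Poissonian times $\{T_i\}_{i\geq 1}$. Apply $\pi_b$ to both, obtaining controlled processes $X_j^{\pi_b}$, payments $\xi_i^{(j)}$, and ruin times $\tau_j$.

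I would then show by induction on $i$ that, on the event both processes are alive, $X_1^{\pi_b}(t)\leq X_2^{\pi_b}(t)$ and $0\leq\xi_i^{(1)}\leq\xi_i^{(2)}$. Between decision times the two processes share the increments $-dY$ so the difference is preserved; only the jump at $T_i$ requires analysis. Given $X_1^{\pi_b}(T_i-)\leq X_2^{\pi_b}(T_i-)$, the rule \eqref{divpbs} splits into three cases. (A) Both pre-decision surpluses are $\geq b$: both reset to $b$, and $\xi_i^{(2)}-\xi_i^{(1)}=X_2^{\pi_b}(T_i-)-X_1^{\pi_b}(T_i-)\geq 0$. (B) Only process 2 pays: $X_1^{\pi_b}(T_i)=X_1^{\pi_b}(T_i-)<b=X_2^{\pi_b}(T_i)$ and $\xi_i^{(2)}\geq 0=\xi_i^{(1)}$. (C) Neither pays: the difference is unchanged. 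In each case the order is preserved and $\xi_i^{(2)}\geq\xi_i^{(1)}\geq 0$; in particular $\tau_1\leq\tau_2$ pathwise.

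Combining these pointwise inequalities, $\xi_i^{(2)}1_{\{T_i\leq\tau_2\}}\geq \xi_i^{(2)}1_{\{T_i\leq\tau_1\}}\geq \xi_i^{(1)}1_{\{T_i\leq\tau_1\}}$ almost surely, whence
\[
V_0(x_2;\pi_b)-V_0(x_1;\pi_b)=\mathbb{E}\Bigl[\sum_{i=1}^{\infty} e^{-\delta T_i}\bigl(\xi_i^{(2)}1_{\{T_i\leq\tau_2\}}-\xi_i^{(1)}1_{\{T_i\leq\tau_1\}}\bigr)\Bigr]\geq 0,
\]
proving the claim. The only subtle step is case (B), where I rely on $X_1^{\pi_b}(T_i-)<b$ to ensure the lower process remains dominated by the upper process after the latter is reset to $b$; this is automatic from the hypothesis of that case, so there is no real obstacle.
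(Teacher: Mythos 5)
Your proof is correct and follows essentially the same route as the paper: a pathwise coupling of the two controlled processes driven by a common $Y$ and common decision times $\{T_i\}$, establishing the ordering $X_1^{\pi_b}\le X_2^{\pi_b}$ and $\xi_i^{(1)}\le\xi_i^{(2)}$ and hence monotonicity of $V_0(\cdot;\pi_b)$. Your case analysis (A)/(B)/(C) at decision times is a more explicit rendering of the same observation the paper makes ("a dividend either brings both surpluses down to $b$, or brings the higher one closer to the lower"), so the two arguments agree in substance.
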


\begin{proof}[Proof of Lemma \ref{lem.vprime.geq0}] 
 It suffices to show that $V_0(x;\pi_{b})$ is strictly increasing in $x$, i.e.
\begin{equation*}
V_{0}(x;\pi_{b})<V_{0}(y;\pi_{b}),~\mbox{for }0\leq x<y.
\end{equation*}

\corrs{Regarding the construction of the law $\mathbb{P}$ of $X$ on the probability space of the sample path, $(\Omega, \mathbb{F})$, we shall assume that $\mathbb{P}(X(0)=0)=1$ and refer to the law $\mathbb{P}_x$ the law of $x+X$ under $\mathbb{P}$. In this sense, in the following, we will only work with the primary measure $\mathbb{P}$, which have also taken into account of the $\{T_i\}$ (for example by taking the product measure).}

Suppose $x>0$, for a given sample path of $X=X(\omega)$ starting at $0$, the modified sample path with $x$ units shifted upward (denoted $X^{\pi_{b},x}_\omega$) can never exceed that with $y$ units shifted up (denoted $X^{\pi_{b},y}_\omega$), because a dividend either brings both surplus down to $b$ ($X^{\pi_{b},y}_\omega(T_i)=b=X^{\pi_{b},x}_\omega(T_i)$ for some $i\in\mathbb{N}$), or brings the $X^{\pi_b,y}_\omega$ closer to $X^{\pi_b,x}_\omega$ such that $X^{\pi_{b},y}_\omega(T_i)=b>X^{\pi_{b},x}_\omega(T_i)$ for some $i\in\mathbb{N}$. Using the same argument, it is clear that $X^{\pi_{b},x}_\omega$ can never receive more \corr{dividends} than $X^{\pi_{b},y}_\omega$. This shows $V_{0}(x;\pi_{b})\leq V_{0}(y;\pi_{b})$. Note on the ($\mathbb{P}$-a.s.) event $\{T_1<\infty\}$, there is a positive probability that $x+X(\omega)$ hits above $b_u$ at $T_1$ before ruin, giving the strict inequality.

For $x=0$, we have $V_{0}(x;\pi_{b})=0$. Hence, it suffices to show $V_0(y;\pi_{b})>0$. Again, on the ($\mathbb{P}$-a.s.) event $\{T_1<\infty\}$, there is a positive probability that $y+X(\omega)$ hits above $b_u$ at $T_1$ before ruin, showing the strict inequality.
\end{proof}
\corr{
\begin{lemma}\label{Lemma.VdLeq1AtBstar}
	For $d> 0$, we have: 
	\begin{equation}\label{Vp.bublSmoothLeq1}
	V_0^\prime(b^*;\pi_{b^*+d})=-\frac{\Hqr(b^*+d)}{\Jqr(b^*+d)}\Jqr^\prime(d)+\Hqr^\prime(d)<1.
	\end{equation}
\end{lemma}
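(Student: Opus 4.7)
The equality in \eqref{Vp.bublSmoothLeq1} will follow by straight differentiation of the explicit formula \eqref{eqt.v0rho} for $V_0(x;\pi_b)$ in $x$, evaluated at $x=b^*$ and $b=b^*+d$; this step is purely mechanical.

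For the strict inequality, my plan is to reduce it to a pair of scale-function monotonicities already in hand. I would start by applying the same differentiation at $d=0$: using $\Hqr(b^*)/\Jqr(b^*) = -1/\phiqr$ (from $Q(b^*)=0$) and the elementary identities $\Jqr'(0) = \delta\phiqr/(\gamma+\delta)$ and $\Hqr'(0) = \gamma/(\gamma+\delta)$ (both immediate from the definitions together with $\Zqr(0)=\Zq(0)=1$ and $\Zqr'(x) = \phiqr\Zqr(x) - \gamma\Wq(x)$), one obtains $V_0'(b^*;\pi_{b^*})=1$. Hence the content of the lemma is that this derivative strictly decreases as the barrier shifts upward from $b^*$ to $b^*+d$.

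Next, I would write $\Hqr(b^*+d)/\Jqr(b^*+d) = -1/\phiqr + Q(b^*+d)$ with $Q(b^*+d)>0$ and use $\Jqr'(d)/\phiqr + \Hqr'(d) = \Jqr(d)$ to bring the RHS of \eqref{Vp.bublSmoothLeq1} into the form $\Jqr(d) - Q(b^*+d)\Jqr'(d)$. I would then establish the integral representation
\[
Q(b^*+d) \;=\; \frac{1}{\Jqr(b^*+d)}\int_0^d \Jqr(b^*+t)\,dt,
\]
by observing that $N(b) := \phiqr\Hqr(b) + \Jqr(b)$ satisfies $N(b^*)=0$ and, by a direct computation, $N'(b) = \phiqr\Jqr(b)$. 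Substituting this in, together with $\Jqr(d)-1 = \int_0^d \Jqr'(s)\,ds$, reduces the desired strict inequality to
\[
\int_0^d \bigl[\Jqr'(s)\Jqr(b^*+d) - \Jqr'(d)\Jqr(b^*+s)\bigr]\,ds \;<\; 0,
\]
which will follow if the map $s\mapsto \Jqr'(s)/\Jqr(b^*+s)$ is strictly increasing on $[0,d]$.

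The hard part will be proving this monotonicity, which (using $\Jqr''/\Jqr' = \Zqr'/\Zqr$) reduces to the pointwise bound $\Zqr'(s)/\Zqr(s) > \Jqr'(b^*+s)/\Jqr(b^*+s)$. I would split this comparison at the auxiliary point $u := b^*+s$. First, since \eqref{ineqt.Z.W} makes $\Wq/\Zqr$ non-decreasing, the identity $\Zqr'/\Zqr = \phiqr - \gamma\Wq/\Zqr$ shows that $\Zqr'/\Zqr$ is non-increasing, giving $\Zqr'(s)/\Zqr(s) \geq \Zqr'(u)/\Zqr(u)$. Second, from $\Jqr = (\delta\Zqr + \gamma\Zq)/(\gamma+\delta)$ one has $\Zqr/\Jqr = (\gamma+\delta)/(\delta + \gamma\Zq/\Zqr)$, which by the strict monotonicity of $\Zqr/\Zq$ in \eqref{eqt.ZqrZq.increasing} is strictly increasing in $u$; equivalently, $\Zqr'(u)/\Zqr(u) > \Jqr'(u)/\Jqr(u)$ pointwise. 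Chaining these two inequalities at $u=b^*+s$ yields the required strict bound and hence the monotonicity, closing the argument.
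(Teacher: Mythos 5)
Your proof is correct, but it takes a genuinely different route from the paper, so let me compare. The paper attacks \eqref{Vp.bublSmoothLeq1} by a brute-force computation of $\tfrac{\partial}{\partial d}\bigl(-\tfrac{\Hqr(b^*+d)}{\Jqr(b^*+d)}\Jqr'(d)+\Hqr'(d)\bigr)$, reducing via the identities $\tfrac{\partial}{\partial b}(\Hqr/\Jqr)=1-(\Hqr/\Jqr+1/\phiqr)\Jqr'/\Jqr$ and $\phiqr^{-1}\Jqr''=\Jqr'-\Hqr''$ to the quantity $\Zq(b^*+d)\Zqr'(d)-\Zqr(b^*+d)\Zq'(d)$, which is shown to be nonnegative and increasing in $b^*$ by \eqref{ineq.Zqr.W}, and strictly positive at $b^*=0$ by \eqref{eqt.ZqrZq.increasing}; integrating in $d$ from $0$ and applying \eqref{Eqt.VdEq1bstar} finishes. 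You instead normalise the expression once and for all: the identity $\Jqr'/\phiqr+\Hqr'=\Jqr$ and the decomposition $\Hqr(b^*+d)/\Jqr(b^*+d)=-1/\phiqr+Q(b^*+d)$ turn the target into $\Jqr(d)-Q(b^*+d)\Jqr'(d)<1$; the observation that $N:=\phiqr\Hqr+\Jqr$ satisfies $N(b^*)=0$ and $N'=\phiqr\Jqr$ gives the tidy integral formula $Q(b^*+d)\Jqr(b^*+d)=\int_0^d\Jqr(b^*+t)\,dt$; and the problem collapses to a Chebyshev-type comparison governed by strict monotonicity of $s\mapsto\Jqr'(s)/\Jqr(b^*+s)$, which you chain from $\Zqr'/\Zqr$ nonincreasing (equivalent to \eqref{ineqt.Z.W}) and $\Zqr'/\Zqr>\Jqr'/\Jqr$ (equivalent to \eqref{eqt.ZqrZq.increasing}). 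Both proofs therefore rest on the same two scale-function facts, but your route makes the appearance of those facts structurally transparent — each one becomes a single ratio monotonicity — and replaces the paper's long chain of derivative identities with one-line algebra.

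Two small remarks. First, your claim $V_0'(b^*;\pi_{b^*})=1$ is a genuine sharpening of the paper's cited $\leq 1$ in \eqref{Eqt.VdEq1bstar}; the computation via $\Jqr'(0)=\delta\phiqr/(\gamma+\delta)$ and $\Hqr'(0)=\gamma/(\gamma+\delta)$ checks out whenever $Q(b^*)=0$. Second, that last proviso is the only implicit hypothesis in your argument: both the $d=0$ evaluation and the integral representation for $Q$ require $Q(b^*)=0$, i.e.\ $b^*>0$. This is harmless where the lemma is actually used (in Lemma \ref{lemma.bl.exist} the hypothesis rules out $b^*=0$, since your own lemma applied with $b^*=0$ would contradict $V_\kappa'(0;\pi^{\kappa,s}_{b_u,0})>1$), and if you wanted the statement at $b^*=0$ too you could simply replace the equality $N(b^*)=0$ by $N(b^*)\geq 0$; this gives $Q(b^*+d)\Jqr(b^*+d)\geq\int_0^d\Jqr(b^*+t)\,dt$, which suffices since $Q$ enters with a negative sign.
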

\begin{proof}
	By direct computation, we obtain for $b\geq 0$
	\begin{align}
	\frac{\partial}{\partial b}\Big(\frac{\Hqr(b)}{\Jqr(b)}\Big)=~&1-\Big(\frac{\Hqr(b)}{\Jqr(b)}+\frac{1}{\phiqr}\Big)\frac{\Jqr^\prime(b)}{\Jqr(b)},\\
	\frac{1}{\phiqr}\Jqr^{\pp}(b)=~&\Jqr^\prime(b)-\Hqr^{\pp}(b),
	\end{align}
	which gives
	\begin{align*}
	~&\frac{\partial}{\partial d}\Big(-\frac{\Hqr(b^*+d)}{\Jqr(b^*+d)}\Jqr^\prime(d)+\Hqr^\prime(d)\Big)\\
	=~&-\Jqr^\prime(d)\Bigg(1-\Big(\frac{\Hqr(b^*+d)}{\Jqr(b^*+d)}+\frac{1}{\phiqr}\Big)\frac{\Jqr^\prime(b^*+d)}{\Jqr(b^*+d)}\Bigg)-\frac{\Hqr(b^*+d)}{\Jqr(b^*+d)}\Jqr^{\pp}(d)+\Hqr^{\pp}(d)\\
	=~&-\Big(\frac{\Hqr(b^*+d)}{\Jqr(b^*+d)}+\frac{1}{\phiqr}\Big)\frac{1}{\Jqr(b^*+d)}\Big(\Jqr^{\pp}(d)\Jqr(b^*+d)-\Jqr^\prime(b^*+d)\Jqr^\prime(d)\Big)\\
	=~&-\Big(\frac{\Hqr(b^*+d)}{\Jqr(b^*+d)}+\frac{1}{\phiqr}\Big)\frac{1}{\Jqr(b^*+d)}\Big(\phiqr\Jqr^\prime(d)\Jqr(b^*+d)-\phiqr\Hqr^{\pp}(d)\Jqr(b^*+d)-\Jqr^\prime(b^*+d)\Jqr^\prime(d)\Big)\\
	=~&-\Big(\frac{\Hqr(b^*+d)}{\Jqr(b^*+d)}+\frac{1}{\phiqr}\Big)\frac{\phiqr\frac{\gamma}{\gamma+\delta}}{\Jqr(b^*+d)}\Big(\frac{\delta}{\gamma+\delta}\phiqr\Zqr(d)\Zq(b^*+d)-\delta\Wq(d)\Jqr(b^*+d)\Big)
	\end{align*}
	where the last term inside the bracket is
	\begin{align*}
		&\frac{\delta}{\gamma+\delta}\phiqr\Zqr(d)\Zq(b^*+d)-\delta\Wq(d)\frac{\delta}{\gamma+\delta}\Zqr(b^*+d)-\delta\Wq(d)\frac{\gamma}{\gamma+\delta}\Zq(b^*+d)\\
		=~&\frac{\delta}{\gamma+\delta}\Big(\Zq(b^*+d)\big(\phiqr\Zqr(d)-\gamma\Wq(d)\big)-\delta\Wq(d)\Zqr(b^*+d)\Big)\\
		=~&\frac{\delta}{\gamma+\delta}\Big(\Zq(b^*+d)\Zqr^\prime(d)-\Zqr(b^*+d)\Zq^\prime(d)\Big)
	\end{align*}
	By taking derivative w.r.t. $b$, we get
	\begin{align*}
	\frac{\partial}{\partial b}\Big(\Zq(b+d)\Zqr^\prime(d)-\Zqr(b+d)\Zq^\prime(d)\Big)
	=~&\delta \Wq(b+d)\Zqr^\prime(d)-\Zqr^\prime(b+d)\delta \corrs{\Wq(d)}\\
	=~&\phiqr\delta\Wq(b+d)\Big(\Zqr(d)-\frac{\Wq(d)}{\Wq(b+d)}\Zqr(b+d)\Big)\\
	\geq ~& 0
	\end{align*}
	by \eqref{ineq.Zqr.W}. Therefore, we can conclude that 
	\begin{align*}
	~&\frac{\partial}{\partial d}\Big(-\frac{\Hqr(b^*+d)}{\Jqr(b^*+d)}\Jqr^\prime(d)+\Hqr^\prime(d)\Big)\\
	=~&-\Big(\frac{\Hqr(b^*+d)}{\Jqr(b^*+d)}+\frac{1}{\phiqr}\Big)\frac{\phiqr\frac{\gamma}{\gamma+\delta}}{\Jqr(b^*+d)}\Big(\frac{\delta}{\gamma+\delta}\phiqr\Zqr(d)\Zq(b^*+d)-\delta\Wq(d)\Jqr(b^*+d)\Big)\\
	\leq ~&-\Big(\frac{\Hqr(b^*+d)}{\Jqr(b^*+d)}+\frac{1}{\phiqr}\Big)\frac{\phiqr\frac{\gamma}{\gamma+\delta}}{\Jqr(b^*+d)}\Big(\frac{\delta}{\gamma+\delta}\phiqr\Zqr(d)\Zq(d)-\delta\Wq(d)\Jqr(d)\Big)\\
	=~&-\Big(\frac{\Hqr(b^*+d)}{\Jqr(b^*+d)}+\frac{1}{\phiqr}\Big)\frac{\phiqr\frac{\gamma}{\gamma+\delta}}{\Jqr(b^*+d)}\frac{\delta}{\gamma+\delta}\Big(\Zq(d)\Zqr^\prime(d)-\Zq^\prime(d)\Zqr(d)\Big)\\
	=~&-\Big(\frac{\Hqr(b^*+d)}{\Jqr(b^*+d)}+\frac{1}{\phiqr}\Big)\frac{\phiqr\frac{\gamma}{\gamma+\delta}}{\Jqr(b^*+d)}\frac{\delta}{\gamma+\delta}\Zq(d)^2\frac{\partial}{\partial d}\Big(\frac{\Zqr(d)}{\Zq(d)}\Big)\\
	<~& 0
	\end{align*}
	by \eqref{eqt.ZqrZq.increasing}. Finally, by integrating the above and using \eqref{Eqt.VdEq1bstar} we get
	$$V_0^\prime(b^*;\pi_{b^*+d})=-\frac{\Hqr(b^*+d)}{\Jqr(b^*+d)}\Jqr^\prime(d)+\Hqr^\prime(d)<V_0^\prime(b^*;\pi_{b^*})\leq 1$$as required.
\end{proof}
}

\section{Verification lemma}\label{section.verification.lemma}
In this section, we give a sufficient condition for a strategy to be optimal. We first characterise the smoothness of a function by the following definition.
\begin{definition}\label{def.smooth.function}
	\corr{If $X$ is of unbounded variation, a function $f$ is smooth if $f\in\mathcal{C}^2(0,\infty)$. Otherwise if $X$ is of bounded variation, a function $f$ is smooth if $f\in\mathcal{C}^1(0,\infty)$.}
\end{definition}
\begin{remark}\label{remark.X.variations}
	It is well-known that $X$ has path of bounded variation if and only if $\sigma=0$ and $\int_{(0,1)}x\Pi(dx)<\infty$. Examples of SPLP with bounded variation include the compound Poisson process with negative drift. Examples of SPLP with unbounded variation include \corr{Brownian motion} with upward jumps, see Section \ref{section.illustration} for illustrations.
\end{remark}

The extended generator for $X$ applied on a function $F$, $\mathcal{L}F$, is given by
\begin{equation}\label{Def.extended.operation}
\mathcal{L}F(x):=-cF'(x)+\frac{\sigma^2}{2}F''(x)+\int_{(0,\infty)}\big[F(x+s)-F(x)-F'(x)s1_{\{s<1\}}\big]\Pi(ds)
\end{equation}
whenever it ($\mathcal{L}F$) is well defined (if $F$ is sufficiently smooth), and where the term $\frac{\sigma^2}{2}F''(x)$ is understood to vanish if $X$ is of bounded variation (no Gaussian component). Lemma \ref{verification.lemma} characterises sufficient conditions that a strategy need to satisfy in order to be optimal.

\begin{lemma}\label{verification.lemma}
	Suppose $\pi\in\Pi_\kappa$ and its value function $H(x):=V_{\kappa}(x;\pi)$ satisfies
	\begin{enumerate}
		\item $H$ is smooth, 
		\item $H\geq 0$,
		\item $(\mathcal{L}-\delta)H(x)+\gamma\left( l-\kappa+H(x-l)-H(x)\right)\leq 0$,  $x\geq l \geq 0$,
	\end{enumerate}
	then $\pi$ is optimal, i.e. $H(x)=v_{\kappa}(x)$ for all $x\geq0$.
\end{lemma}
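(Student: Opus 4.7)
The plan is to fix an arbitrary admissible $\tilde\pi\in\Pi_\kappa$ and show that $V_\kappa(x;\tilde\pi)\leq H(x)$ for every $x\geq 0$; combined with the hypothesis $H=V_\kappa(\cdot;\pi)$ and the fifth property in Remark~\ref{remark.different.kappa.on.v}, this identifies $\pi$ as optimal and $H$ with $v_\kappa$. The central device is the process
\[
M(t):=e^{-\delta(t\wedge\tau^{\tilde\pi})}H\bigl(X^{\tilde\pi}(t\wedge\tau^{\tilde\pi})\bigr)+\sum_{T_i\leq t\wedge\tau^{\tilde\pi}}e^{-\delta T_i}\bigl(\xi_i^{\tilde\pi}-\kappa\bigr)1_{\{\xi_i^{\tilde\pi}>0\}},
\]
which I would show is a local supermartingale under $\mathbb{P}_x$ with $M(0)=H(x)$; the bound $V_\kappa(x;\tilde\pi)\leq H(x)$ will then follow by passing to the limit.

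To establish the supermartingale property, apply the appropriate change-of-variable formula to $e^{-\delta t}H(X^{\tilde\pi}(t))$ up to the localising time $S_n:=\inf\{t:X^{\tilde\pi}(t)\geq n\}\wedge n$. Condition 1 supplies the required regularity in both variation regimes: classical It\^o applies in the unbounded-variation case via $H\in\mathcal{C}^2(0,\infty)$, and the Lebesgue-Stieltjes change-of-variable formula suffices in the bounded-variation case via $H\in\mathcal{C}^1(0,\infty)$ (exploiting that $X^{\tilde\pi}$ then has no continuous martingale component). Grouping the continuous L\'evy drift, the compensated martingale from the positive jumps of $X$, the negative jumps of $X^{\tilde\pi}$ at the decision times $T_i$ (of magnitude $\xi_i^{\tilde\pi}$), their compensator at rate $\gamma$, and the cumulative-dividend increments that enter $M$, the drift of $M$ at time $s\leq\tau^{\tilde\pi}\wedge S_n$ with $y=X^{\tilde\pi}(s-)$ and $l=\xi^{\tilde\pi}(s)$ reads
\[
e^{-\delta s}\Bigl[(\mathcal{L}-\delta)H(y)+\gamma\bigl(H(y-l)-H(y)+(l-\kappa)1_{\{l>0\}}\bigr)\Bigr].
\]
Since $\tilde\pi\in\Pi_\kappa$ forces $l\in\{0\}\cup[\kappa,y]$, the indicator $1_{\{l>0\}}$ can be dropped on $\{l\geq\kappa\}$, and condition 3 applied pointwise with that $l$ delivers the non-positivity of this drift (the $l=0$ contribution being handled by condition 3 at $l=0$); hence $M(\cdot\wedge S_n)$ is a genuine supermartingale.

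Taking expectations yields $\mathbb{E}_x[M(t\wedge S_n)]\leq H(x)$. To pass to the limit, first send $n\uparrow\infty$, using that a L\'evy process cannot explode in finite time; then send $t\uparrow\infty$. The dividend sum converges monotonically to its infinite-horizon counterpart because $(\xi_i^{\tilde\pi}-\kappa)1_{\{\xi_i^{\tilde\pi}>0\}}\geq 0$ for every $\tilde\pi\in\Pi_\kappa$, while condition 2 together with Fatou's lemma allows the surviving term $\mathbb{E}_x[e^{-\delta(t\wedge\tau^{\tilde\pi}\wedge S_n)}H(X^{\tilde\pi}(t\wedge\tau^{\tilde\pi}\wedge S_n))]$ to be discarded in the limit. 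Spectral positivity of $X$, combined with the admissibility constraint $0\leq\xi_i^{\tilde\pi}\leq X^{\tilde\pi}(T_i-)$, forces $X^{\tilde\pi}(\tau^{\tilde\pi})=0$, so no boundary term at the ruin time contributes. The resulting inequality $V_\kappa(x;\tilde\pi)\leq H(x)$ is exactly what we need.

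The main obstacle is assembling the drift decomposition rigorously: one must carefully combine the It\^o/Lebesgue-Stieltjes output (which already carries a compensator for the L\'evy positive jumps of $X$) with the compensator of the independent Poisson stream $N_\gamma$, whose arrivals simultaneously generate a downward jump of $X^{\tilde\pi}$ and an increment of the cumulative dividend sum in $M$, while keeping the no-payment decisions consistently accounted for in both contributions. Secondary technicalities are the bounded-variation sub-case, where the $\mathcal{C}^2$ requirement of classical It\^o must be bypassed via the $\mathcal{C}^1$ change-of-variable formula, and the uniform-integrability/monotone-convergence arguments used to justify the $n,t\uparrow\infty$ limits; both are standard in this branch of the L\'evy-dividend literature.
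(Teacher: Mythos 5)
Your overall strategy is the same as the paper's: build a (local) supermartingale from the discounted candidate value plus the accrued net dividends, compute the drift via a change-of-variables formula, invoke condition 3 pointwise to get a nonpositive drift, localize, take expectations, and pass to the limit with Fatou and monotone convergence. That much is correct.

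There is, however, a concrete gap in the localization. You take $S_n=\inf\{t:X^{\tilde\pi}(t)\geq n\}\wedge n$, which controls the process only from above. But condition 1 gives smoothness of $H$ only on the \emph{open} half-line $(0,\infty)$ (see Definition~\ref{def.smooth.function}), so $H'$ and $H''$ may be unbounded as the surplus approaches $0$, which can happen strictly before $\tau^{\tilde\pi}$. Without also cutting the process off from below, the change-of-variables formula cannot be applied rigorously, and the stochastic-integral terms in your $M$ are not guaranteed to be genuine zero-mean martingales on $[0,S_n]$. The paper's localizing times are $T_n=\inf\{t>0:X^\pi(t)>n\text{ or }X^\pi(t)<1/n\}$, confining the pre-stopping path to the compact set $[1/n,n]$ on which the relevant derivatives of $H$ are bounded; this makes both the change-of-variables step and the martingale property airtight, and $T_n\uparrow\tau^\pi$ a.s. is still enough for the closing Fatou argument. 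Your acknowledgment that "uniform-integrability arguments are standard" does not touch this point, because the issue is not integrability at large surplus or large time, but the possible blow-up of derivatives at the lower boundary before ruin. Replacing $S_n$ by the paper's two-sided $T_n$ (or equivalently by $\inf\{t:X^{\tilde\pi}(t)\notin(1/n,n)\}$) closes the gap; the rest of your argument then goes through as written.
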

\begin{proof}[Proof of Lemma \ref{verification.lemma}]
	See Appendix \ref{Appendix.A}.
\end{proof}

\section{Value function of a periodic $(b_u,b_l)$ strategy}\label{section.value.bubl}
In the following, we first calculate the value function of a periodic $(b_u,b_l)$ strategy with any choices of $b_u$ and $b_l$ such that $b_u>b_l\geq 0$. The value function of $\pi_{b_u,b_l}$ is given by the following theorem. 
\begin{theorem}\label{lemma.value.fcn}
	For $x\geq0$, the value function is given by
	\begin{equation}
	V_{\kappa}(x;\pi_{b_u,b_l})=\frac{\gamma}{\gamma+\delta}\Aqr(b_u-x;d)+\Zqr(b_u-x) V_{\kappa}(b_u;\pi_{b_u,b_l}) + \Cqr(b_u-x) V_{\kappa}(b_l;\pi_{b_u,b_l})\label{V.kapparho},
	\end{equation}
	where
		\begin{align}
	\frac{\gamma}{\gamma+\delta}\Aqr(x;d) &= -\frac{\gamma}{\gamma+\delta}\frac{\mu}{\delta}\Jqr(x) -\Hqr(x)+( d-\kappa )\Cqr(x),\label{A.expression} \\
	d&=b_u-b_l,\\
	\Cqr(x)&=\frac{\gamma}{\gamma+\delta}(\Zq(x)-\Zqr(x)).
	\end{align}
	The constants $V_{\kappa,}(b_u;\pi_{b_u,b_l})$ and $ V_{\kappa}(b_l;\pi_{b_u,b_l})$ are given by 
	\begin{align}
	V_\kappa(b_l;\pi_{b_u,b_l})=~&\frac{(d-\kappa-\frac{\gamma}{\gamma+\delta}\frac{\mu}{\delta})\Zqr(b_u)\Lqr(d;b_u)+\Hqr(b_u)\Zqr(d)-\Hqr(d)\Zqr(b_u)}{\Zqr(b_u)(1-\Lqr(d;b_u))},\label{value.bl}\\
	V_\kappa(b_u;\pi_{b_u,b_l})=~&\frac{\frac{-\gamma}{\gamma+\delta}\Aqr(b_u;d)-\frac{\gamma}{\gamma+\delta}\frac{\mu}{\delta}\Zqr(b_u)\Lqr(d;b_u)+\Cqr(b_u)\Hqr(d)-\Cqr(d)\Hqr(b_u)}{\Zqr(b_u)(1-\Lqr(d;b_u))}\label{value.bu}
	\end{align}
	if $b_l\neq 0$
	\begin{equation}
	\Lqr(x;b_u) :=  \frac{\gamma}{\gamma+\delta}\left[ \Zq(x)-\frac{\Zqr(x)}{\Zqr(b_u)}\Zq(b_u)\right].
	\end{equation}
 Otherwise, we have
	\begin{align}
	V_\kappa(b_l;\pi_{b_u,b_l})&=0,\\
	V_\kappa(b_u;\pi_{b_u,b_l})&=\frac{\gamma}{\gamma+\delta}\frac{-\Aqr(b_u;d)}{\Zqr(b_u)}.
	\end{align}
\end{theorem}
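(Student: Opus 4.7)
The plan is to derive a self-referential identity for $V_\kappa(x; \pi_{b_u,b_l})$ by conditioning on the first decision time $T_1 \sim \mathrm{Exp}(\gamma)$, and then unwind it using the Poisson-observed fluctuation theory for spectrally positive L\'evy processes. Since no dividend is paid before $T_1$ under $\pi_{b_u,b_l}$, the controlled process coincides with $X$ on $[0,T_1)$. By the strong Markov property at $T_1$, and distinguishing between ruin before $T_1$ (contribution zero), $X(T_1) \geq b_u$ (a dividend of $X(T_1) - b_l$ is paid, with net future value $X(T_1) - b_l - \kappa + V_\kappa(b_l;\pi_{b_u,b_l})$), and $0 \leq X(T_1) < b_u$ (no dividend, process continues from $X(T_1)$), one obtains
\begin{equation*}
V_\kappa(x; \pi_{b_u,b_l}) = \mathbb{E}_x\!\left[e^{-\delta T_1}\!\left\{(X(T_1) - b_l - \kappa + V_\kappa(b_l;\pi_{b_u,b_l})) 1_{\{X(T_1) \geq b_u\}} + V_\kappa(X(T_1);\pi_{b_u,b_l}) 1_{\{0 \leq X(T_1) < b_u\}}\right\} 1_{\{T_1 < \tau^{\pi_{b_u,b_l}}\}}\right].
\end{equation*}

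To cast this as a closed-form scale-function expression, I would integrate against the density of $T_1$, which transforms the right-hand side into the $(\gamma+\delta)$-resolvent of $X$ killed upon entering $(-\infty, 0)$, applied to a function built from a linear term ($X(T_1) - b_l - \kappa$) and the unknown $V_\kappa(\cdot;\pi_{b_u,b_l})$ itself. Invoking the Poisson-observed first-passage identities of \citet{PeYa16}, together with the two-sided identity \eqref{ineq.Zqr.W} of \citet{AlIvZh16}, one identifies $\Zqr(b_u - x)$ as the weight attached to the Poisson-observed first up-crossing of $b_u$ before ruin, $\Cqr(b_u - x) = \tfrac{\gamma}{\gamma+\delta}(\Zq(b_u-x) - \Zqr(b_u-x))$ as the correction for intermediate Poisson observations where $X$ is in $[0, b_u)$, and $\tfrac{\gamma}{\gamma+\delta}\Aqr(b_u - x; d)$ as the particular solution associated with the linear piece; the latter decomposes into the combination of $\Hqr$, $\Jqr$ and $\Cqr$ displayed in \eqref{A.expression}.

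Once the functional form \eqref{V.kapparho} is fixed, the two unknown constants $V_\kappa(b_u; \pi_{b_u,b_l})$ and $V_\kappa(b_l; \pi_{b_u,b_l})$ are pinned down by a $2\times 2$ linear system. For $b_l > 0$, the first equation comes from evaluating \eqref{V.kapparho} at $x = 0$ together with $V_\kappa(0; \pi_{b_u,b_l}) = 0$ (certain ruin at $0$ for SPLPs with no monotonic paths); the second comes from evaluating at $x = b_l$. After eliminating one unknown via the first relation, the $x = b_l$ equation acquires the factor $1 - \Lqr(d; b_u)$, leading to \eqref{value.bl}--\eqref{value.bu}. The degenerate case $b_l = 0$ is immediate from $V_\kappa(b_l) = 0$.

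The main obstacle is the self-referential term $V_\kappa(X(T_1); \pi_{b_u,b_l})$ on the event $\{0 \leq X(T_1) < b_u\}$, which forces one to invoke the Poisson-observed resolvent identities rather than elementary integration. An equivalent and arguably cleaner route is verification: check directly that the candidate \eqref{V.kapparho}, with the postulated values of $V_\kappa(b_l)$ and $V_\kappa(b_u)$, satisfies $(\mathcal{L} - \delta) V_\kappa = 0$ on $(0, b_u)$, $(\mathcal{L} - \delta - \gamma) V_\kappa + \gamma(V_\kappa(b_l;\pi_{b_u,b_l}) + x - b_l - \kappa) = 0$ on $(b_u, \infty)$, smoothness at $b_u$ in the sense of Definition \ref{def.smooth.function}, and $V_\kappa(0; \pi_{b_u,b_l}) = 0$. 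This verification reduces to routine algebraic manipulations with the scale-function identities collected in Section \ref{section.definition.scale.function}.
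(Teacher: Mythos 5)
Your starting decomposition at $T_1$ is correct, and you rightly flag the self-referential term $V_\kappa(X(T_1);\pi_{b_u,b_l})$ on $\{0\le X(T_1)<b_u\}$ as the crux --- but you never actually dissolve it. Rewriting the right-hand side as a $(\gamma+\delta)$-resolvent of $X$ killed at $\tau_0^-$ still leaves $V_\kappa$ under the integral over $[0,b_u)$, so you are facing a Volterra-type integral equation and merely ``invoking the Poisson-observed resolvent identities'' does not produce its solution. The paper sidesteps this entirely by applying the strong Markov property at a sharper stopping time: for $x<b_u$ it stops at $T_{b_u}^+\wedge\tau_0^-$, where $T_{b_u}^+=\min\{T_i:X(T_i)>b_u\}$ is the first Poissonian observation strictly above $b_u$. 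At $T_{b_u}^+$ the modified surplus deterministically resets to $b_l$, so the continuation value is exactly $X(T_{b_u}^+)-b_l-\kappa+V_\kappa(b_l;\pi_{b_u,b_l})$, and at $\tau_0^-$ the contribution is zero; no self-reference remains and one obtains directly
\begin{equation*}
V_\kappa(x;\pi_{b_u,b_l})=\Bqr(b_u-x;b_u)+\bigl(b_u-b_l-\kappa+V_\kappa(b_l;\pi_{b_u,b_l})\bigr)\Lqr(b_u-x;b_u),
\end{equation*}
where $\Bqr(b_u-x;b_u)=\mathbb{E}_x[e^{-\delta T_{b_u}^+}(X(T_{b_u}^+)-b_u)1_{\{T_{b_u}^+<\tau_0^-\}}]$ and $\Lqr(b_u-x;b_u)=\mathbb{E}_x[e^{-\delta T_{b_u}^+}1_{\{T_{b_u}^+<\tau_0^-\}}]$. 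The companion case $x\ge b_u$ uses $T_1\wedge\tau_{b_u}^-$ in the same spirit. This choice of stopping time is the idea your proposal is missing.

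Two secondary issues. Your probabilistic reading of the pieces in \eqref{V.kapparho} is not quite right: $\Zqr(b_u-x)$ is \emph{not} ``the weight attached to the Poisson-observed first up-crossing of $b_u$ before ruin'' --- that weight is $\Lqr(b_u-x;b_u)$. The form \eqref{V.kapparho} only emerges after an algebraic re-parametrization of $\Bqr(\cdot;b_u)$ and $\Lqr(\cdot;b_u)$ (via identities such as \eqref{identityL1}--\eqref{identityL2} and the boundary relation at $x=b_u$) designed to isolate the two unknown constants $V_\kappa(b_u;\pi_{b_u,b_l})$ and $V_\kappa(b_l;\pi_{b_u,b_l})$; these are then solved from the $2\times 2$ system at $x=0$ and $x=b_l$, and here your description is correct, including the origin of the factor $1-\Lqr(d;b_u)$ in the denominator of \eqref{value.bl}--\eqref{value.bu}. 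Finally, the verification alternative you sketch would indeed work in principle, but as written it neither establishes the boundary-value problem nor supplies closed forms for the two constants, so it cannot by itself prove the full statement.
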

\begin{proof}[Proof of Theorem \ref{lemma.value.fcn}]
 See Appendix \ref{Appendix.B}. 
\end{proof}

\begin{remark}
	With the help from (\ref{scale.fcn.neg.beg})-(\ref{scale.fcn.neg.end}), we can rewrite (\ref{V.kapparho}) for $x\geq b_u$ as
	\begin{equation}
	V_{\kappa}(x;\pi_{b_u,b_l})=\frac{\gamma}{\gamma+\delta} \Auqr(x-b_u;d)+e^{-\phiqr(x-b_u)} V_{\kappa}(b_u;\pi_{b_u,b_l}) + \frac{\gamma}{\gamma+\delta} \left(1-e^{-\phiqr(x-b_u)}\right) V_{\kappa}(b_l;\pi_{b_u,b_l})\label{V.kapparho.upper},
	\end{equation}
	where 
	\begin{align}
	\Auqr(x;d)= x +\left(d-\kappa+\frac{\mu}{\gamma+\delta}\right)(1-e^{-\phiqr x}).
	\end{align}
	which is a more explicit representation.
\end{remark}

\section{Construction of a candidate optimal strategy}\label{S.8}

In this section, we construct a candidate optimal strategy, making two educated guesses for the optimality conditions, which we implement sequentially: smoothness (Section \ref{section.smoothness}), and a further condition on the derivative at the optimal lower barrier (Section \ref{section.secondstep}). Existence is established \corr{at the end of Section \ref{section.secondstep}}, but proof \corr{of} uniqueness is postponed until the end of Section \ref{section.strategy.optimal}, where we verify that it is indeed the optimal strategy thanks to the verification lemma developed in Section \ref{section.verification.lemma}.

\subsection{First step: a smoothness condition}\label{section.smoothness}

Based on a smooth fitting argument, the optimal value function should have one more degree of smoothness (compared to that of a general $(b_u,b_l)$ strategy). In this section, we investigate which condition the value function $V_{\kappa}(\cdot;\pi_{b_u,b_l})$ must satisfy in order to be  smooth according to Definition \ref{def.smooth.function}. This is summarised in the following lemma.

\begin{lemma}\label{lemma.smooth}
	$V_{\kappa}(\cdot;\pi_{b_u,b_l})$ is smooth if and only if 
	\begin{equation}
	V_{\kappa}(b_u;\pi_{b_u,b_l})-V_{\kappa}(b_l;\pi_{b_u,b_l})=b_u-b_l-\kappa.\label{smoothness.condition}
	\end{equation}
\end{lemma}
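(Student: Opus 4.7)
The plan is to exploit the piecewise explicit form of $V_\kappa(\cdot;\pi_{b_u,b_l})$: on $[0,b_u]$ it is given by~(\ref{V.kapparho}), and on $[b_u,\infty)$ it is given by~(\ref{V.kapparho.upper}). Because scale functions and their integrals are infinitely differentiable on $(0,\infty)$, each piece is as smooth as we could wish away from $x=b_u$, and the whole question reduces to matching one-sided derivatives at the single point $x=b_u$.

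First I would check continuity. Using $\Zqr(0)=\Zq(0)=\Jqr(0)=1$, $\Zb(0)=0$ and $\Cqr(0)=0$, the definitions of $\Aqr$ and $\Auqr$ immediately give $\Aqr(0;d)=\Auqr(0;d)=0$, so both representations evaluate to $V_\kappa(b_u;\pi_{b_u,b_l})$ at $x=b_u$; no constraint on the parameters is needed for $V_\kappa(\cdot;\pi_{b_u,b_l})$ to be continuous at $b_u$.

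Next I would compute $V_\kappa^\prime(b_u^-;\pi_{b_u,b_l})$ and $V_\kappa^\prime(b_u^+;\pi_{b_u,b_l})$. The derivatives needed are $\Zqr^\prime(y)=\phiqr\Zqr(y)-\gamma\Wq(y)$, $\Zq^\prime(y)=\delta\Wq(y)$, the induced expressions for $\Jqr^\prime,\Hqr^\prime,\Cqr^\prime$, and the elementary derivative of $\Auqr$, together with the sign flip coming from the change of variable $y=b_u-x$ on the left versus $y=x-b_u$ on the right. After collecting terms, a direct calculation should give
\begin{equation*}
V_\kappa^\prime(b_u^+;\pi_{b_u,b_l})-V_\kappa^\prime(b_u^-;\pi_{b_u,b_l})=\gamma\Wq(0)\Bigl[(b_u-b_l-\kappa)-\bigl(V_\kappa(b_u;\pi_{b_u,b_l})-V_\kappa(b_l;\pi_{b_u,b_l})\bigr)\Bigr].
\end{equation*}
Two cases then finish the argument. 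If $X$ has bounded variation, smoothness means $C^1$ by Definition~\ref{def.smooth.function} and $\Wq(0)=1/c>0$, so the displayed jump vanishes if and only if (\ref{smoothness.condition}) holds. If $X$ has unbounded variation, then $\Wq(0)=0$, so $V_\kappa(\cdot;\pi_{b_u,b_l})$ is automatically $C^1$ at $b_u$, and smoothness instead requires $C^2$. Differentiating the two pieces once more, every $\Wq$ produced in the previous step is replaced by $\Wq^\prime$, and a parallel computation yields $V_\kappa^{\prime\prime}(b_u^+;\pi_{b_u,b_l})-V_\kappa^{\prime\prime}(b_u^-;\pi_{b_u,b_l})=-\gamma\Wq^\prime(0+)$ times the very same bracket. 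Since $\Wq^\prime(0+)>0$ in the unbounded variation regime (equal to $2/\sigma^2$ if $\sigma>0$ and to $+\infty$ otherwise), matching the second derivatives again forces exactly~(\ref{smoothness.condition}).

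The main obstacle is the bookkeeping in the first-order computation: the derivatives of $\Aqr,\Zqr,\Cqr$ at $0$ on one side and of $\Auqr$ and $e^{-\phiqr(\cdot)}$ at $0$ on the other generate several contributions involving $\phiqr$, $\mu/\delta$ and $(d-\kappa)$, and one must verify that essentially all of them cancel across the two sides, leaving only the multiples of $\gamma\Wq(0)$. A secondary subtlety arises in the non-Gaussian unbounded variation case, where $\Wq^\prime(0+)=+\infty$ and the individual one-sided second derivatives of $V_\kappa(\cdot;\pi_{b_u,b_l})$ at $b_u$ diverge; in this situation the $C^2$ matching has to be interpreted as the requirement that the coefficient of the divergent $\Wq^\prime(0+)$ factor vanishes, which once more collapses to~(\ref{smoothness.condition}).
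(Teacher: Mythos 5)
Your proposal is correct and follows essentially the same route as the paper: compute one-sided first derivatives (bounded variation) or second derivatives (unbounded variation) at $x=b_u$ using the piecewise representations~(\ref{V.kapparho}) and~(\ref{V.kapparho.upper}), and observe that the mismatch is exactly $\pm\gamma\Wq(0)$ (resp.\ $\mp\gamma\Wq^\prime(0+)$) times $\bigl[V_\kappa(b_u;\pi_{b_u,b_l})-V_\kappa(b_l;\pi_{b_u,b_l})-(d-\kappa)\bigr]$, from which the equivalence follows since $\Wq(0)>0$ (resp.\ $\Wq^\prime(0+)>0$). Your closing remark about the non-Gaussian unbounded-variation case, where $\Wq^\prime(0+)=+\infty$ and the one-sided second derivatives diverge individually so that the cancellation must be read as vanishing of the divergent coefficient, is a genuine subtlety that the paper's write-up passes over silently; it does not change the conclusion but is a worthwhile caveat.
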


\begin{proof}[Proof of Lemma \ref{lemma.smooth}]
	First, note that
	\begin{align*}
	&\Zqr\p(x)=\phiqr \Zqr(x)-\gamma \Wq(x),\\
	&\Zqr\pp(x)= \phiqr^2 \Zqr(x) - \phiqr\gamma \Wq(x) - \gamma \Wq\p(x);
	\end{align*}
	and also that
	\begin{equation*}
	\Zq(0)=~1,\quad
	\Zq\p(x)=~\delta \Wq(x),\quad \text{and}\quad
	\Zq\pp(x+) =~ \delta \Wq\p(x+),
	\end{equation*}
	$x\geq 0$. In addition, it is well known that 
	\begin{equation*}
	W_q(0)=0 \iff X~\mbox{is of unbounded variation}.
	\end{equation*}	
	From $\Auqr(x;d)=x+(b_u-b_l-\kappa+\frac{\mu}{\gamma+\delta})(1-e^{-\phiqr x})$, we have
	\begin{equation*}
	\Auqr'(0+;d)=1+\phiqr (d-\kappa+\frac{\mu}{\gamma+\delta}), \text{ and }\Auqr''(0+;d)=-\phiqr^2(d-\kappa+\frac{\mu}{\gamma+\delta}).
	\end{equation*}
	Therefore, we get from \corr{Theorem} \ref{lemma.value.fcn} that
	\begin{align*}
	&V_{\kappa}'(b_u+;\pi_{b_u,b_l}) = \frac{\gamma}{\gamma+\delta}\left(1+\phiqr (d-\kappa+\frac{\mu}{\gamma+\delta})\right) -\phiqr V_{\kappa}(b_u;\pi_{b_u,b_l}) + \frac{\gamma}{\gamma+\delta}\phiqr V_{\kappa}(b_l;\pi_{b_u,b_l}),\\
	&V_{\kappa}''(b_u+;\pi_{b_u,b_l}) =  \frac{\gamma}{\gamma+\delta}\left( -\phiqr^2(d-\kappa+\frac{\mu}{\gamma+\delta})\right) +\phiqr^2 V_{\kappa}(b_u;\pi_{b_u,b_l}) -\frac{\gamma}{\gamma+\delta}\phiqr^2 V_{\kappa}(b_l;\pi_{b_u,b_l}).
	\end{align*}
	On the other hand, we have
	\begin{align*}
	\Aqr'(0+;d)=&-\frac{\mu}{\delta}\left(\frac{\delta}{\gamma+\delta}\left( \phiqr -\gamma \Wq(0+)\right) +\frac{\gamma}{\gamma+\delta}\delta \Wq(0+) \right) -1+\left( d-\kappa \right) \left( \delta \Wq(0+) - \phiqr +\gamma \Wq(0+)\right)\nonumber\\
	=&-\frac{\mu}{\gamma+\delta}\phiqr-1+\left( d-\kappa \right) \left( (\gamma+\delta) \Wq(0) - \phiqr \right)\nonumber\\
	=&-1-\phiqr\left( d-\kappa+\frac{\mu}{\gamma+\delta} \right)
	+\left( d-\kappa \right)(\gamma+\delta) \Wq(0)
	\end{align*}
	when $X$ is of bounded variation and 
	\begin{align*}
	\Aqr''(0+;d)=&-\frac{\mu}{\delta}\left(\frac{\delta}{\gamma+\delta}
	\left(  \phiqr^2  - \phiqr\gamma \Wq(0+) - \gamma \Wq^{\prime}(0+)\right) +\frac{\gamma}{\gamma+\delta}\delta \Wq^{\prime}(0+) \right)-\delta \Wq(0)\nonumber\\ 
	&+\left( d-\kappa \right) \left( \delta \Wq^{\prime}(0+) - \phiqr^2  + \phiqr\gamma \Wq(0) + \gamma \Wq^{\prime}(0+)\right)\nonumber\\
	=&-\frac{\mu}{\gamma+\delta}
	\left(  \phiqr^2  - \phiqr\gamma \Wq(0) \right) -\delta \Wq(0)\nonumber\\ 
	&+\left( d-\kappa \right) \left( (\gamma+\delta) W^{\prime}(0+) - \phiqr^2  + \phiqr\gamma \Wq(0) \right)\\
	=&-\frac{\mu}{\gamma+\delta}\phiqr^2 +\left( d-\kappa \right) \left( (\gamma+\delta) \Wq^{\prime}(0+) - \phiqr^2 \right)\nonumber\\
	=&\left( d-\kappa \right)  (\gamma+\delta) \Wq^{\prime}(0+)-\phiqr^2\left(d-\kappa+\frac{\mu}{\gamma+\delta}\right)
	\end{align*}
	when $X$ is of unbounded variation. Similarly,
	\begin{align*}
	\Zqr\p(0+) &= \phiqr -\gamma \Wq(0),\\
	\Cqr\p(0+)&= \gamma \Wq(0)-\frac{\phiqr\gamma}{\gamma+\delta}
	\end{align*}
	when $X$ is of bounded variation and
	\begin{align*}
	\Zqr\pp(0+) = -\gamma \Wq^{\prime}(0+)+\phiqr^2,\\
	\Cqr\pp(0+) = \gamma \Wq^{\prime}(0+)-\frac{\corr{\phiqr^2}\gamma}{\gamma+\delta}
	\end{align*}
	when $X$ is of unbounded variation.
	
	Therefore, 
	\begin{align*}
	V_{\kappa}'(b_u-;\pi_{b_u,b_l})=~&\frac{\gamma}{\gamma+\delta}\left( 1+\phiqr\left( d-\kappa+\frac{\mu}{\gamma+\delta} \right)
	-\left( d-\kappa \right)(\gamma+\delta) \Wq(0)\right) \nonumber\\
	&-\phiqr V_{\kappa}(b_u;\pi_{b_u,b_l})+\gamma \Wq(0)V_{\kappa}(b_u;\pi_{b_u,b_l})\nonumber\\
	&+\frac{\gamma}{\gamma+\delta}\phiqr V_{\kappa}(b_l;\pi_{b_u,b_l})-\gamma \Wq(0)V_{\kappa}(b_l;\pi_{b_u,b_l})\nonumber\\
	=~&V_{\kappa}'(b_u+;\pi_{b_u,b_l})+\gamma \Wq(0) \left( V_{\kappa}(b_u;\pi_{b_u,b_l})-V_{\kappa}(b_l;\pi_{b_u,b_l})-(d-\kappa)\right) 
	\end{align*}
	when $X$ is of bounded variation and
	\begin{align*}
	V_{\kappa}''(b_u-;\pi_{b_u,b_l})=V_{\kappa}''(b_u+;\pi_{b_u,b_l})-\gamma \Wq^{\prime}(0+) \left( V_{\kappa}(b_u;\pi_{b_u,b_l})-V_{\kappa}(b_l;\pi_{b_u,b_l})-(d-\kappa)\right) 
	\end{align*}
	when $X$ is of unbounded variation.
		Hence, \begin{equation*}
	V_{\kappa}(b_u;\pi_{b_u,b_l})-V_{\kappa}(b_l;\pi_{b_u,b_l})=d-\kappa=b_u-b_l-\kappa
	\end{equation*} 
	is the smoothness condition.
	
\end{proof}

\begin{remark}\label{meaning.smoothness}
	By rearranging (\ref{smoothness.condition}), we have 
	\begin{equation}\label{smooth.equivalent}
	V_{\kappa}(b_u;\pi_{b_u,b_l})=b_u-b_l-\kappa+V_{\kappa}(b_l;\pi_{b_u,b_l}).
	\end{equation}
This is equivalent to the continuity condition when dividends can be made at any time \citep*[see, e.g.][]{BaKyYa13,JeSh95,Loe08a}. In words, it means that the difference in value between both barriers $b_l$ and $b_u$ is exactly equal to the \emph{net} (of transaction costs $\kappa$) dividend paid between those two levels.

It is remarkable that this relation holds \emph{for any} smooth $(b_u,b_l)$ strategy in the periodic decision making framework considered in this paper.	
\end{remark}

\begin{remark}\label{remark.reduce.to.barrier}
	When the smoothness condition (\ref{smoothness.condition}) is met, one can further simplify the value function given in Theorem \ref{lemma.value.fcn} \corr{with the help of $V_\kappa(0;\pi_{b_u,b_l})=0$} and show that for $x\geq 0$,
	\begin{equation}\label{eqt.smooth}
	V_{\kappa}(x;\pi_{b_u,b_l}) = \frac{\Hqr(b_u)}{\Jqr(b_u)}\Jqr(b_u-x)-\Hqr(b_u-x)=V_{0}(x;\pi_{b_u}).
	\end{equation}

	Plugging  $x=b_u$ in (\ref{eqt.smooth}) and using $V_{\kappa}(0;\pi_{b_u,b_l}) =0$, we obtain
	\begin{equation}\label{Vbu.smooth}
	V_{\kappa}(b_u;\pi_{b_u,b_l}) =\frac{\Hqr(b_u)}{\Jqr(b_u)}+\frac{\gamma}{\gamma+\delta}\frac{\mu}{\delta}.
	\end{equation}
Furthermore, the last equality of (\ref{eqt.smooth}) implies that a smooth periodic $(b_u,b_l)$ strategy is increasing, i.e.
	 \begin{equation}\label{eqt.smoothbubl.Dgeq0}
	 V^\prime_{\kappa}(x;\pi_{b_u,b_l})=V^\prime_0(x;\pi_{b_u})\geq 0
	 \end{equation}
	 for all $x\geq 0$, thanks to (\ref{eqt.vprime.geq0}).
\end{remark}

Lemma \ref{lemma.smooth} characterised the condition for the value function of a $(b_u,b_l)$ strategy to be smooth. The following lemma characterises the smoothness condition explicitly in terms of $b_u$ and $b_l$.
\begin{lemma}\label{lemma.smoothness.equivalent}
	The smoothness condition (\ref{smoothness.condition}) is equivalent to $\Gamma_{b_l}(d)=0$, where $d=b_u-b_l$ and $\Gamma_{b_l}$ is defined as
	\begin{equation}\label{bubleqt}
	\Gamma_{b_l}(d):=\left(d-\kappa-\frac{\gamma}{\gamma+\delta}\frac{\mu}{\delta}\right) \Jqr(b_u)-\Hqr(b_u)+\Jqr(d)\Hqr(b_u)-\Jqr(b_u)\Hqr(d)=0.
	\end{equation}
\end{lemma}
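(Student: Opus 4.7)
The plan is to take the smoothness condition
$$V_\kappa(b_u;\pi_{b_u,b_l})-V_\kappa(b_l;\pi_{b_u,b_l})=d-\kappa$$
from Lemma~\ref{lemma.smooth}, substitute the explicit formulas \eqref{value.bl}--\eqref{value.bu} from Theorem~\ref{lemma.value.fcn}, clear the common denominator, and algebraically reduce the resulting identity to $\Gamma_{b_l}(d)=0$. Since both expressions \eqref{value.bl} and \eqref{value.bu} share the denominator $\Zqr(b_u)(1-\Lqr(d;b_u))$, which we need to verify is nonzero (this follows from $\Lqr(d;b_u)\in[0,1)$ for $d<b_u$, since $Z_{\gamma,\delta}(x)/Z_\delta(x)$ is monotone by \eqref{eqt.ZqrZq.increasing}), multiplying through converts the smoothness condition into a polynomial identity in the scale-function quantities.

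The first main step is to rewrite $V_\kappa(b_u;\pi_{b_u,b_l})-V_\kappa(b_l;\pi_{b_u,b_l})-(d-\kappa)$ with common denominator $\Zqr(b_u)(1-\Lqr(d;b_u))$, and to replace $-\tfrac{\gamma}{\gamma+\delta}\Aqr(b_u;d)$ using \eqref{A.expression} by
$$\tfrac{\gamma}{\gamma+\delta}\tfrac{\mu}{\delta}\Jqr(b_u)+\Hqr(b_u)-(d-\kappa)\Cqr(b_u).$$
The second main step is to use the algebraic identities
$$\Jqr(x)=\Zqr(x)+\Cqr(x),\qquad \Zqr(b_u)\Lqr(d;b_u)=\Zqr(b_u)\Cqr(d)-\Zqr(d)\Cqr(b_u),$$
which follow directly from the definitions of $\Jqr$, $\Cqr$ and $\Lqr$. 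These let one rewrite all mixed products $\Cqr(b_u)\Hqr(d)-\Cqr(d)\Hqr(b_u)$, $H_{\gamma,\delta}(b_u)Z_{\gamma,\delta}(d)-H_{\gamma,\delta}(d)Z_{\gamma,\delta}(b_u)$, and the $L_{\gamma,\delta}$-terms uniformly in terms of $\Jqr$ and $\Hqr$ evaluated at $b_u$ and $d$. After collecting the coefficients of $\Jqr(b_u)$, $\Hqr(b_u)$, $\Jqr(d)\Hqr(b_u)$ and $\Jqr(b_u)\Hqr(d)$, the numerator collapses to a scalar multiple (by $\Jqr(b_u)$) of $\Gamma_{b_l}(d)$ as defined in \eqref{bubleqt}.

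The third step is to handle the degenerate case $b_l=0$ separately: using the boundary formulas $V_\kappa(0;\pi_{b_u,b_l})=0$ and $V_\kappa(b_u;\pi_{b_u,b_l})=\tfrac{\gamma}{\gamma+\delta}\cdot\tfrac{-\Aqr(b_u;d)}{\Zqr(b_u)}$ with $d=b_u$, and noting that $\Gamma_0(d)=(d-\kappa-\tfrac{\gamma}{\gamma+\delta}\tfrac{\mu}{\delta})\Jqr(d)-\Hqr(d)$ (the cross terms cancel since $b_u=d$), the equivalence reduces to a single direct substitution. Combining both cases yields the claim.

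The main obstacle is purely bookkeeping: keeping track of the many cross terms in the numerator and verifying that only the combinations appearing in $\Gamma_{b_l}$ survive. The structural simplification hinges on the two identities linking $\Jqr,\Cqr,\Zqr$ and $\Lqr$ noted above; once those are deployed, no new analytic input is needed beyond Theorem~\ref{lemma.value.fcn}.
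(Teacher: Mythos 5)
Your proposal follows essentially the same route as the paper's proof in Appendix~C.1: substitute the explicit formulas \eqref{value.bl}--\eqref{value.bu} into the smoothness condition \eqref{smoothness.condition}, clear the common denominator $\Zqr(b_u)(1-\Lqr(d;b_u))$ (the paper establishes its nonvanishing probabilistically in Appendix~B), and simplify to reach \eqref{bubleqt}, with the $b_l=0$ case handled separately via \eqref{eq.to.be.solve.bl.0}. Where the paper compresses the reduction into ``after some algebra,'' you helpfully spell out the two key scale-function identities $\Jqr=\Zqr+\Cqr$ and $\Zqr(b_u)\Lqr(d;b_u)=\Zqr(b_u)\Cqr(d)-\Zqr(d)\Cqr(b_u)$ that drive the cancellations; these are correct and make the bookkeeping transparent, but the underlying argument is the same.
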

\begin{proof}[Proof of Lemma \ref{lemma.smoothness.equivalent}]
	See Appendix \ref{App.C1}.
\end{proof}

	\begin{remark}
\corr{	The choice of notation $d=b_u-b_l$ reflects naturally the structure of the construction of the value function, which is first anchored at $b_l$ through a derivative. The distance between both barriers then depends directly on the level of fixed transaction costs, and has a direct interpretation as being the minimum viable amount of dividends to be paid. More rationale for this choice can be found in \citet[Remark A.3.1]{Tu17}, who revisited the original results of \citet{JeSh95}.}
	
	\end{remark}

The following proposition assures the existence of a periodic $(b_u,b_l)$ strategy that satisfies the smoothness condition (\ref{smoothness.condition}).

\begin{proposition}\label{lemma.existence.bu}
	For any \corr{$b_l\in[0,b^*]$}, there is a unique $b_u>b^*$ with $b_u> b_l+\kappa$ such that $\Gamma_{b_l}(b_u-b_l)=0$.
 \corr{Moreover, such $b_u$ is a continuous function of $b_l$. In particular, $b_u>b^*$ implies}
	\begin{equation}\label{eqt.smooth.dev.positive}
	\phiqr \frac{\Hqr(b_u)}{\Jqr(b_u)}+1>0.
	\end{equation}
\end{proposition}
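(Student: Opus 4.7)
The plan is to convert the algebraic condition $\Gamma_{b_l}(d)=0$ into a ``smoothness-gap'' statement about the value function $V_0(\cdot;\pi_{b_u})$, analyse its sign at the boundaries of the admissible interval for $b_u$, and then use a monotonicity argument to obtain uniqueness.

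First I would exploit the explicit formula $V_0(x;\pi_b)=\frac{\Hqr(b)}{\Jqr(b)}\Jqr(b-x)-\Hqr(b-x)$ from \eqref{eqt.v0rho}, together with $\Hqr(0)=-\tfrac{\gamma\mu}{(\gamma+\delta)\delta}$ and $\Jqr(0)=1$, to verify by direct algebra the factorisation
\[
\Gamma_{b_l}(d) \;=\; -\Jqr(b_u)\,\Psi(b_u),\qquad \Psi(b_u):=V_0(b_u;\pi_{b_u})-V_0(b_l;\pi_{b_u})-(b_u-b_l-\kappa),
\]
where $b_u=b_l+d$. Since $\Jqr(b_u)>0$, the task reduces to locating zeros of $\Psi$ in the open interval $(\max(b^*,b_l+\kappa),\infty)$.

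For the sign of $\Psi$ at the left endpoint I would split into two cases. If $b_l+\kappa\geq b^*$, evaluate at $b_u=b_l+\kappa$: the third term vanishes and Lemma \ref{lem.vprime.geq0} (strict monotonicity of $V_0(\cdot;\pi_b)$) yields $\Psi(b_l+\kappa)>0$. If $b_l+\kappa<b^*$, I would first establish the identity
\[
v_0'(x)=\Jqr(b^*-x),\qquad x\in[0,b^*],
\]
by differentiating \eqref{eqt.v0rho} at $b=b^*$ and substituting $\Hqr(b^*)/\Jqr(b^*)=-1/\phiqr$ together with $\Jqr'=\tfrac{\delta\phiqr}{\gamma+\delta}\Zqr$ and $\Hqr'=\tfrac{\gamma}{\gamma+\delta}\Zq$. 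Since $\Jqr\geq 1$, this gives $v_0(b^*)-v_0(b_l)\geq b^*-b_l$ and hence $\Psi(b^*)\geq\kappa>0$. Thus in both cases $\Psi>0$ at the left boundary. At the right end, the common asymptotic growth rate $e^{\phi_\delta x}$ of $\Hqr$ and $\Jqr$ makes $V_0(b_u;\pi_{b_u})$ tend to a finite limit and $V_0(b_l;\pi_{b_u})\to 0$ (the cancellation $\Hqr(b_u)\Jqr(d)-\Hqr(d)\Jqr(b_u)$ being of lower order), while $b_u-b_l-\kappa\to\infty$; so $\Psi(b_u)\to-\infty$, and by continuity the intermediate value theorem delivers a zero in the admissible interval.

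For uniqueness I would prove $\Psi'(b_u)<0$ throughout this range. With $A(b_u):=V_0(b_u;\pi_{b_u})=\Hqr(b_u)/\Jqr(b_u)+\tfrac{\gamma\mu}{(\gamma+\delta)\delta}$ one has, as in the proof of Lemma \ref{Lemma.VdLeq1AtBstar}, $A'(b_u)=1-Q(b_u)\Jqr'(b_u)/\Jqr(b_u)$ with $Q(b_u)>0$ for $b_u>b^*$; differentiation gives
\[
\Psi'(b_u) \;=\; A'(b_u)\bigl(1-\Jqr(d)\bigr) \,+\, \bigl(V_0'(b_l;\pi_{b_u})-1\bigr).
\]
The first summand is strictly negative since $\Jqr(d)>1$ for $d>0$ and $0<A'(b_u)<1$; the second is nonpositive by adapting the derivative-comparison argument of Lemma \ref{Lemma.VdLeq1AtBstar} (with the point $b^*$ replaced by $b_l$), which rests on the inequalities \eqref{ineq.Zqr.W} and \eqref{eqt.ZqrZq.increasing}. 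Continuity of $b_u$ as a function of $b_l$ then follows from the implicit function theorem applied to $\Gamma_{b_l}(b_u-b_l)$, since $\partial_{b_u}\Gamma_{b_l}\neq 0$ at the zero by the monotonicity just established. Finally, the inequality \eqref{eqt.smooth.dev.positive} is an immediate rewriting of $Q(b_u)>0$, which holds for every $b_u>b^*$ by the characterisation of $b^*$ as the unique zero of $Q$. The main technical obstacle is the strict-monotonicity step: bounding $V_0'(b_l;\pi_{b_u})-1\leq 0$ uniformly in $b_u$ will require a derivative-comparison argument in the spirit of Lemma \ref{Lemma.VdLeq1AtBstar}.
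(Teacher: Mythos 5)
The reformulation $\Gamma_{b_l}(d)=-\Jqr(b_u)\,\Psi(b_u)$ with $\Psi(b_u)=V_0(b_u;\pi_{b_u})-V_0(b_l;\pi_{b_u})-(b_u-b_l-\kappa)$ is correct and is a genuinely different route from the one in Appendix~C.2: the paper works with $\Gamma_{b_l}(d)$ directly, establishing $\Gamma_{b_l}(\kappa)<0$ via an auxiliary function $\widetilde\Gamma$ (splitting the cases $b^*=0$ and $b^*>0$) and showing $\Gamma_{b_l}'(d')\propto Q(b_l+d')$ \emph{only at roots} $d'$, then using the sign change of $Q$ at $b^*$ for uniqueness. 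Your boundary-sign arguments (evaluating $\Psi$ at $b_u=b_l+\kappa$ via strict monotonicity of $V_0(\cdot;\pi_b)$, or at $b_u=b^*$ via $v_0'(x)=\Jqr(b^*-x)\geq 1$) are clean and would give a more transparent existence proof than the one in the paper.

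However, the uniqueness step has a genuine gap. You write $\Psi'(b_u)=A'(b_u)\bigl(1-\Jqr(d)\bigr)+\bigl(V_0'(b_l;\pi_{b_u})-1\bigr)$ (which is correct) and assert that the two summands are separately $<0$ and $\leq 0$. Neither is justified, and the second is in fact \emph{false} near the left endpoint: by your own identity $v_0'(x)=\Jqr(b^*-x)$, one has $V_0'(b_l;\pi_{b^*})=\Jqr(b^*-b_l)>1$ whenever $b_l<b^*$, so $V_0'(b_l;\pi_{b_u})-1>0$ for $b_u$ close to $b^*$. An ``adaptation of Lemma~\ref{Lemma.VdLeq1AtBstar} with $b^*$ replaced by $b_l$'' cannot repair this, because the base case of that comparison would be $V_0'(b_l;\pi_{b_l})\leq 1$, which requires $Q(b_l)\geq 0$, i.e.\ $b_l\geq b^*$---exactly the opposite of what we need. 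Likewise $A'(b_u)>0$ is asserted but never established (only $A'<1$ follows from $Q(b_u)>0$).

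The conclusion $\Psi'(b_u)<0$ for $b_u>b^*$ turns out to be correct, but only by a cancellation between the two summands. Substituting $A'(b_u)=1-Q(b_u)\Jqr'(b_u)/\Jqr(b_u)$ and the identity $V_0'(b_l;\pi_{b_u})=\Jqr(d)-Q(b_u)\Jqr'(d)$ (which follows from $\frac{1}{\phiqr}\Jqr'+\Hqr'=\Jqr$), the terms $\pm\bigl(1-\Jqr(d)\bigr)$ cancel and one obtains
\begin{equation*}
\Psi'(b_u)=-\,Q(b_u)\left[\frac{\Jqr'(b_u)}{\Jqr(b_u)}\bigl(1-\Jqr(d)\bigr)+\Jqr'(d)\right]
=-\,Q(b_u)\,\frac{\delta\,\phiqr}{\gamma+\delta}\,\frac{\Zqr(b_u)}{\Jqr(b_u)}\bigl(1-\Lqr(d;b_u)\bigr),
\end{equation*}
and the last factor is strictly positive (this is the determinant computation from Appendix~B). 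So $\Psi'(b_u)<0$ for $b_u>b^*$ and $\Psi'(b^*)=0$. This is the correct justification; the sign-by-sign argument you give does not work, and, if left as stated, the monotonicity step fails. Once this is repaired, your approach is complete and arguably crisper than the paper's, because you obtain monotonicity of $\Psi$ on the whole half-line $(b^*,\infty)$ rather than just a contradiction at hypothetical extra roots.
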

\begin{proof}[Proof of Proposition \ref{lemma.existence.bu}]
	See Appendix \ref{App.C2}. 
\end{proof}

\begin{remark}\label{R_proofsmooth}
	When a liquidation-at-first-opportunity is considered, i.e. $b_l=0$, then the smoothness condition (\ref{smoothness.condition}) is equivalent to 
	\begin{equation}\label{eq.to.be.solve.bl.0}
	V_\kappa(b_u;\pi_{b_u,b_l})=\frac{\gamma}{\gamma+\delta}\frac{-\Aqr(b_u;d)}{\Zqr(b_u)}=b_u-\kappa.
	\end{equation}
	\corr{If the process survives until the next dividend decision time, then the net dividend payment will be \emph{at least} $b_u-\kappa$. So, $V_\kappa(b_u;\pi_{b_u,b_l})$ should be that quantity, discounted over the time to next dividend decision time where a dividend can be paid, times the} probability that this will happen (all in an expected sense). What this formula says is that smoothness ensures that it all balances out so as to obtain an expected present value of $b_u-\kappa$.
	
	This can be used to intuitively explain how Proposition \corr{\ref{lemma.existence.bu}} is proved. Consider the following two extreme cases. When $b_u$ is $\kappa$, the left hand side is a value function which is positive, which is greater than the right hand side which is zero. On the other hand when $b_u$ is large (close to infinity), the left hand side is $\frac{\gamma}{\gamma+\delta}(\text{``increase in surplus''}+b_u-\kappa)$, which is approximately $\frac{\gamma}{\gamma+\delta}(b_u-\kappa)$ since $b_u$ is large. This quantity is smaller than $b_u-\kappa$, the right hand side. In other words, since we are in periodic setting, we need to wait for the first opportunity to liquidate, the discounting effect is dominant when $b_u$ is large, resulting in the left hand side being smaller. Now it should be clear that there is a ``sweet spot'' such that the equation holds. 
	
	When $b_l>0$ a similar reasoning applies, although the proof is more involved; see Appendix \ref{Appendix.C}.	
\end{remark}

Thanks to Proposition \ref{lemma.existence.bu}, we know that for a given $b_l$, we can always find a $b_u$ ($>b_l+\kappa$) such that the smoothness condition (\ref{smoothness.condition}) is met. We call those strategies ``smooth $(b_u,b_l)$ strategy'' and denote those strategies as $\pis$ (the value function of the strategy is smooth). We should remember that $b_u$ is uniquely determined by $b_l$. In the following, we assume $b_u$, $b_l$ and $\kappa$ are fixed, i.e. we are looking at a particular smooth $(b_u,b_l)$ strategy. The value function of such strategy is denoted as \corr{$V_s(x)$} when the initial surplus is $x$. In the case where confusion may arise, we will write it explicitly as $V_\kappa(\cdot;\pis)$.

\subsection{Second step: a condition on the derivative of $V_s$ at $b_l$}\label{section.secondstep}

\corr{From last section, we know that for a fixed $b_l\in[0,b^*]$, we can always choose a unique $b_u>b_l+\kappa$ such that $V_\kappa(\cdot;\pi_{b_u,b_l})$ is smooth.} This is our first step to optimality, i.e. we shall only look at those strategies.

The second step to optimality concerns the derivative of the value function at $b_l$. Specifically, if $\Voo\leq 1$, we call the liquidation-at-first-opportunity strategy $\pi^{\kappa,s}_{b_u,0}$ ``optimal'' and denote it as $\pi^{\kappa,*}_{b_u,0}$. This means we choose $b_l=0=b_l^*$. On the other hand, if $V^\prime_\kappa(0;\pi^{\kappa,s}_{{b}_u,0})> 1$ and there are $(b_u,b_l)$ such that 
\begin{equation}\label{opt.condition}
V_\kappa^\prime(b_l;\pis)=1,
\end{equation}
then we also call it ``optimal'' and denote it as $\pio$. Hence, the notation $\pio$ with $b_l\in[0,b^*]$ stands for an ``optimal $(b_u,b_l)$ strategy''. The value function when a chosen optimal $(b_u,b_l)$ strategy $\pio$ (for $b_l\in[0,b^*]$) is applied is denoted as $V_*$, or $V_\kappa(\cdot,\pio)$ if the dependence of $b_l$ and $\kappa$ needs to be stressed.

Of course, our goal is to show that an ``optimal'' strategy exists and is optimal in the sense of (\ref{eqt.optimal}). The first goal (existence) is established \corr{at the end of this section after the following remarks} while the second goal (optimality) will be achieved in Section \ref{section.strategy.optimal}. 

\begin{remark}\label{remark.bubl.space}
	In the space of strategies being considered, it is useful to know the following relationship. 
	\begin{equation*}
	\mbox{	Set of optimal $(b_u,b_l)$ $\subseteq$ Set of smooth $(b_u,b_l)$ $\subseteq$ Set of general $(b_u,b_l)$}.
	\end{equation*}
	Ultimately, we want to select an element in the set of optimal $(b_u,b_l)$ strategy to serve our candidate strategy to be verified optimal. To do that, we need to make sure that there is at least one element in the first set. Specifically, Proposition \ref{lemma.existence.bu} ensures that the middle set has as many elements as the real line (and therefore is non-empty). Lemma \ref{lemma.bl.exist} (which appears later) guarantees that there is at least one element in the first set.	
\end{remark}

\begin{remark}\label{R_vdashkappa}
The derivative of the value function represents the marginal (discounted) rate of return from investing in the company. When the derivative is greater than 1, it means that an extra one dollar invested in the company will generate more than one dollar of return and therefore we should leave that dollar in the surplus, i.e. \corr{pay no dividends}. On the other hand, if the derivative is less than 1, the company cannot generate enough profit and hence it would be better off paying out the cash as dividends. Following this argument, we can see that if the company wants to maximise the total discounted dividends, it should pay dividends until it reaches a level where the derivative is $1$. This explains \eqref{opt.condition} and is a standard behaviour observed when (optimal) barriers are applied.

Furthermore, the condition $\Voo\leq 1$ represents the scenario that the company does not have a good prospects anywhere, so liquidating the company as soon as possible is desired, i.e. choose $b_l^*=0$. On the other hand, the condition $\Voo>1$ represents the scenario that the company has a good prospect and therefore we should keep the company running, i.e. choose $b_l^*>0$. In summary, we have
	\begin{equation}
	\begin{cases}
	\Voo\leq 1&\implies b_l^*=0\\
	\Voo>1 &\implies b_l^*>0
	\end{cases}.
	\end{equation}
\end{remark}
	
\begin{remark}\label{R_kappadistance}
Recall from Remark \ref{R_vdashkappa} that $V'$ can be interpreted as a marginal profitability rate. When $b_u$ and $b_l$ satisfy the smoothness condition (\ref{smoothness.condition}), by rearranging the terms we get
		\begin{equation}\label{E_intkappa}
		\kappa= \int_{b_l}^{b_u}(1-V_{\kappa}'(x;\pis))dx, 
		\end{equation}
which means that the distance between $b_l$ and $b_u$ must be such that the integrated ``profitability shortfall'' $1-V'$ is exactly $\kappa$.

Interestingly, \eqref{E_intkappa} always holds when dividends can be paid at any time \cite[see, e.g., Equation (3.13) of][]{JeSh95}. In our framework, this becomes a necessary (but not sufficient) condition for optimality. The condition becomes sufficient when $b_l$ is chosen so as to minimise the distance in \eqref{E_intkappa}, which is when \eqref{opt.condition} holds.
\end{remark}

\begin{remark}
		Recall from Remark \ref{remark.reduce.to.barrier} that the value function of a smooth periodic $(b_u,b_l)$ stategy simplifies, especially at $b_u$. The following displays the relationship between the optimal barriers $(b_u,b_l)$ versus the optimal barriers under different settings. For the optimal continuous barrier $\bar{b}>0$, it holds that $\Hqr(\bar{b})=0$, and the value at the barrier simplifies. For the optimal periodic barrier $b^*>0$ (without costs), we have $$\frac{\Hqr(b^*)}{\Jqr(b^*)}=-\frac{1}{\phiqr},$$
and further simplifications can be made. In our case however, we \corr{only have $V_\kappa'(b_l;\pi_{b_u,b_l})=1$ if $b_l>0$ which yields in view of \eqref{eqt.smooth} and \eqref{eqt.smooth.dev.positive}}
		\begin{equation}
		\frac{\Hqr(b_u)}{\Jqr(b_u)}=\frac{\Hqr^\prime(b_u-b_l)-1}{\Jqr^\prime(b_u-b_l)}>-\frac{1}{\phiqr} 
		\end{equation}
		when the smoothness condition is verified, and \eqref{eqt.smooth} does not seem to simplify further.

\end{remark}

\corr{We now proceed to prove the existence of $\pio$, i.e. the following proposition.
	\begin{lemma}\label{lemma.bl.exist}
		If $V_\kappa^\prime(0;\pi^{\kappa,s}_{{b}_u,0})>1$, then there exists $(b_u,b_l)$ with $b_l>0$ such that (\ref{opt.condition}) holds, i.e. $\pio$ exists.
	\end{lemma}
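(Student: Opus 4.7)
The plan is to reduce the question to an intermediate value theorem argument on the single real parameter $b_l$, exploiting the identification of smooth strategies with periodic barrier strategies established in Remark \ref{remark.reduce.to.barrier}. For each $b_l\in[0,b^*]$, Proposition \ref{lemma.existence.bu} provides a unique $b_u=b_u(b_l)>b_l+\kappa$ satisfying the smoothness condition \eqref{smoothness.condition}, and $b_u(\cdot)$ is continuous. Define
\[
f(b_l) := V_\kappa^\prime(b_l;\pi^{\kappa,s}_{b_u(b_l),b_l}),\qquad b_l\in[0,b^*].
\]
By \eqref{eqt.smooth} this equals $V_0^\prime(b_l;\pi_{b_u(b_l)})$, which reduces the problem to a statement about the (known) value function of a plain periodic barrier strategy evaluated at a level shifting with the barrier.

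First I would check the endpoint $b_l=0$: the hypothesis gives $f(0)=V_\kappa^\prime(0;\pi^{\kappa,s}_{b_u(0),0})>1$ directly. Next I would check the endpoint $b_l=b^*$. Here $b_u(b^*)>b^*+\kappa>b^*$, so $b_u(b^*)=b^*+d$ with $d>\kappa>0$, and applying Lemma \ref{Lemma.VdLeq1AtBstar} gives
\[
f(b^*)=V_0^\prime(b^*;\pi_{b^*+d})<1.
\]
Thus $f$ crosses the level $1$ between $b_l=0$ and $b_l=b^*$.

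The remaining ingredient is continuity of $f$ on $[0,b^*]$. Since $V_0(x;\pi_b)=\frac{\Hqr(b)}{\Jqr(b)}\Jqr(b-x)-\Hqr(b-x)$, we have
\[
V_0^\prime(x;\pi_b)=-\frac{\Hqr(b)}{\Jqr(b)}\Jqr^\prime(b-x)+\Hqr^\prime(b-x),
\]
which is jointly continuous in $(x,b)$ on its domain because $\Hqr,\Jqr$ and their derivatives are continuous on $[0,\infty)$ (the relevant smoothness of scale functions is classical, see Section \ref{section.definition.scale.function}). Combining with the continuity of $b_l\mapsto b_u(b_l)$ from Proposition \ref{lemma.existence.bu}, the map $f$ is continuous on $[0,b^*]$.

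The intermediate value theorem then yields some $b_l^*\in(0,b^*)$ with $f(b_l^*)=1$, and the associated pair $(b_u(b_l^*),b_l^*)$ defines the desired strategy $\pio$. The only mildly delicate point is the continuity of $b_u(\cdot)$, which is already asserted in Proposition \ref{lemma.existence.bu}; beyond this invocation, the argument is a routine boundary-value comparison.
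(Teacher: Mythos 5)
Your argument is correct and follows essentially the same route as the paper's own proof: both reduce via \eqref{eqt.smooth} and Proposition \ref{lemma.existence.bu} to a continuity-plus-intermediate-value argument on $b_l\in[0,b^*]$, using the hypothesis at $b_l=0$ and Lemma \ref{Lemma.VdLeq1AtBstar} at $b_l=b^*$. Your treatment of the continuity of $b_l\mapsto V_\kappa'(b_l;\pis)$ is slightly more explicit than the paper's one-line appeal to Proposition \ref{lemma.existence.bu}, but the substance is identical.
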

\begin{proof}[Proof of Lemma \ref{lemma.bl.exist}]
	From Proposition \ref{lemma.existence.bu}, we know that the mapping $b_l\mapsto V^\prime_\kappa(b_l;\pi_{b_u,b_l}^{\kappa,s})-1$ is a continuous function on the domain $b_l\in[0,b^*]$. The given condition in the lemma is the same as assuming the function value at $b_l=0$ is greater than $0$. Hence, we are done if we can show that the function value is negative at $b_l=b^*$. From \eqref{eqt.smooth}, such condition is the same as 
	$$V^\prime_0(b^*;\pi_{b^*+d(b^*)})<1$$
	with $d(b^*)$ being the unique solution to $\Gamma_{b^*}(d)=0$ (see Proposition \ref{lemma.existence.bu}). However, such condition is precisely Lemma \ref{Lemma.VdLeq1AtBstar}.
\end{proof}
}

Note that uniqueness will be shown in Lemma \ref{lemma.unique.bl}, whose proof uses results from Proposition \ref{thm} in the next section.

\corr{
	\begin{remark}
		Combining Lemmas \ref{lemma.bl.exist}, \ref{lemma.unique.bl} and Proposition \ref{lemma.existence.bu}, we conclude that there is a  unique pair $(b_u^*,b_l^*)$ with $0\leq b_l^*\leq b^*\leq b_u^*$ such that the periodic $(b_u,b_l)$ strategy $\pi_{b_u^*,b_l^*}$ is optimal.
	\end{remark}
}

\section{Verification of the optimality of the candidate $(b_u,b_l)$ strategy $\pio$}\label{section.strategy.optimal}

The previous section ensures that there is at least one $\pio$ (defined by (\ref{smoothness.condition}) and (\ref{opt.condition}), see Remark \ref{remark.bubl.space} for details). In this section, unless otherwise specified, we work on a chosen optimal $(b_u,b_l)$ strategy $\pio$ and show its optimality. Recall that the value function of such strategy is denoted as $\Vo$ or $V_\kappa(\cdot;\pio)$. Moreover, all properties derived for $\pis$ \corr{are} automatically satisfied by $\pio$. The optimality of a $\pio$ is summarised by the following theorem.

\begin{proposition}\label{thm}
	The strategy $\pio$ is optimal, i.e. $V_*(x)=V_\kappa(x;\pio)=v_{\kappa}(x)$ for $x\geq 0$.
\end{proposition}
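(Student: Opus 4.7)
The plan is to apply the Verification Lemma (Lemma~\ref{verification.lemma}) with $H := V_* = V_\kappa(\cdot; \pio)$. Condition~1 (smoothness) is immediate: $\pio$ was constructed to satisfy \eqref{smoothness.condition}, which by Lemma~\ref{lemma.smooth} is precisely equivalent to $V_*$ being smooth in the sense of Definition~\ref{def.smooth.function}. Condition~2 ($V_* \geq 0$) follows from Remark~\ref{remark.different.kappa.on.v}(4) since $\pio \in \Pi_\kappa$.

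The substantive content is Condition~3. I would exploit the simplification from Remark~\ref{remark.reduce.to.barrier}: under smoothness, \eqref{eqt.smooth} gives $V_*(x) = V_0(x; \pi_{b_u})$ on $[0, b_u]$, so on this region $V_*$ inherits the integro-differential equation of the no-cost periodic barrier value function of \citet*{PeYa16}, reflecting that no dividend is paid at decision times while the surplus lies below $b_u$. On $[b_u, \infty)$, the explicit representation \eqref{V.kapparho.upper} together with the smoothness relation $V_*(b_u) - V_*(b_l) = b_u - b_l - \kappa$ yields equality in Condition~3 at the action $l = x - b_l$ prescribed by $\pio$ (bringing the surplus from $x$ down to $b_l$).

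Because the prescribed action yields equality on each of the two regions, Condition~3 reduces to verifying that this action maximises $l \mapsto l - \kappa \, 1_{\{l > 0\}} + V_*(x - l) - V_*(x)$. This in turn reduces to the derivative bounds (a) $V_*'(y) \geq 1$ for $y \in [0, b_l]$ (so that pushing the surplus below $b_l$ costs more than one-for-one in value) and (b) $V_*'(y) \leq 1$ for $y \in [b_l, \infty)$ (so that keeping extra surplus above $b_l$ is less valuable than cashing it out). Both anchor at $V_*'(b_l) = 1$ coming from \eqref{opt.condition}. Above $b_u$ the bound (b) can be obtained by direct differentiation of \eqref{V.kapparho.upper}, with the smoothness relation forcing $V_*'(b_u) \leq 1$ and the decay factor $e^{-\phiqr(x - b_u)}$ propagating this along. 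On $[b_l, b_u]$ the bound leans on the representation $V_* = V_0(\cdot; \pi_{b_u})$, Lemma~\ref{Lemma.VdLeq1AtBstar}, and the fact that $b_u > b^*$ established in Proposition~\ref{lemma.existence.bu}.

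The main obstacle will be (b) on $[b_l, b_u]$: although the closed-form expression for $V_*$ is available, showing that $V_*'$ does not exceed $1$ uniformly on this interval is delicate. The natural route is to combine the scale-function monotonicity \eqref{eqt.ZqrZq.increasing} with the inequality \eqref{ineq.Zqr.W} already exploited in the proof of Lemma~\ref{Lemma.VdLeq1AtBstar}, perhaps through a convexity/concavity analysis of $V_*$ near $b_l$. Once (a) and (b) are secured, Condition~3 follows by a routine split on the sign of $x - b_u$, and the Verification Lemma delivers $V_*(x) = v_\kappa(x)$ for $x \geq 0$.
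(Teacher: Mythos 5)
Your plan mirrors the paper's own proof: invoke the verification lemma with $H=V_*$, check Conditions~1--2 directly, and reduce Condition~3 to the derivative bounds $V_*'\geq 1$ on $[0,b_l]$ and $V_*'\leq 1$ on $[b_l,\infty)$ (anchored at \eqref{opt.condition} and the upper-tail computation from \eqref{V.kapparho.upper}), from which the HJB inequality follows by checking that $l=(x-b_l)1_{\{x\geq b_u\}}$ maximises the intervention term. The ``obstacle'' you flag on $[b_l,b_u]$ is exactly what the paper resolves, and with the tool you anticipate: a concavity/single-turning-point analysis of $V_*''$ based on \eqref{ineq.Zqr.W} and \eqref{eqt.smooth.dev.positive} (Lemmas~\ref{lemma.vdbuleq1}, \ref{lemma.one.turning.point} and \ref{lemma.concave.bl}, culminating in Lemma~\ref{lemma.8.6}), rather than Lemma~\ref{Lemma.VdLeq1AtBstar}, which the paper reserves for the existence argument.
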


We need some preparations for the proof. \corr{First, we establish Lemmas \ref{lemma.vdbuleq1}-\ref{lemma.8.6}.}

\begin{lemma}\label{lemma.vdbuleq1}
	$\Vs'(x)<1$ for $x\geq b_u$.
\end{lemma}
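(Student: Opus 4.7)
The plan is to leverage the simplification available for smooth strategies (equation \eqref{eqt.smooth}, which gives $V_s(x)=V_0(x;\pi_{b_u})$) and then differentiate the closed-form representation on the overshoot region $x\ge b_u$. First I would use the representation \eqref{V.kapparho.upper} for $V_\kappa(x;\pi_{b_u,b_l})$ when $x\geq b_u$, which gives
\begin{equation*}
V_s(x)=\frac{\gamma}{\gamma+\delta}\Auqr(x-b_u;d)+e^{-\phiqr(x-b_u)}V_s(b_u)+\frac{\gamma}{\gamma+\delta}\bigl(1-e^{-\phiqr(x-b_u)}\bigr)V_s(b_l).
\end{equation*}
Differentiating in $x$ and using $\Auqr'(y;d)=1+\phiqr(d-\kappa+\mu/(\gamma+\delta))e^{-\phiqr y}$ yields an expression of the form $\frac{\gamma}{\gamma+\delta}+\phiqr e^{-\phiqr(x-b_u)}[\cdots]$.

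Next I would plug in the smoothness relation \eqref{smoothness.condition} to substitute $V_s(b_l)=V_s(b_u)-(d-\kappa)$, and then use \eqref{Vbu.smooth}, which states $V_s(b_u)=\frac{\Hqr(b_u)}{\Jqr(b_u)}+\frac{\gamma}{\gamma+\delta}\frac{\mu}{\delta}$. A short calculation causes the $\mu$-terms to cancel inside the bracket, leaving the clean identity
\begin{equation*}
V_s'(x)=\frac{\gamma}{\gamma+\delta}-\frac{\delta\,\phiqr}{\gamma+\delta}\,\frac{\Hqr(b_u)}{\Jqr(b_u)}\,e^{-\phiqr(x-b_u)},\qquad x\ge b_u.
\end{equation*}
This reduces the inequality $V_s'(x)<1$ to $\phiqr\frac{\Hqr(b_u)}{\Jqr(b_u)}\,e^{-\phiqr(x-b_u)}>-1$.

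The final step is to verify this bound using the sign information already in the paper. Set $a:=\phiqr\Hqr(b_u)/\Jqr(b_u)$ and $t:=e^{-\phiqr(x-b_u)}\in(0,1]$; I need $at>-1$. By Proposition \ref{lemma.existence.bu} we have the key estimate \eqref{eqt.smooth.dev.positive}, namely $a>-1$. If $a\ge 0$ then $at\ge 0>-1$ trivially; if $-1<a<0$ then $at\ge a>-1$ since $t\le 1$ and $a<0$. In either case the strict inequality holds for every $x\ge b_u$, which is exactly the claim.

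I do not anticipate a genuine obstacle here: the entire argument is essentially a computation combined with one appeal to \eqref{eqt.smooth.dev.positive}. The only care required is in the algebraic simplification after substituting the smoothness condition (so that the bracket collapses to $-\frac{\delta}{\gamma+\delta}\Hqr(b_u)/\Jqr(b_u)$), and in handling the two sign cases for $\Hqr(b_u)$ separately when applying \eqref{eqt.smooth.dev.positive}.
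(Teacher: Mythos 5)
Your proposal is correct and follows the paper's argument essentially verbatim: both derive the closed-form derivative $V_s'(x)=\frac{\gamma}{\gamma+\delta}-\frac{\delta\phiqr}{\gamma+\delta}\frac{\Hqr(b_u)}{\Jqr(b_u)}e^{-\phiqr(x-b_u)}$ on $[b_u,\infty)$ by substituting the smoothness condition and \eqref{Vbu.smooth} into \eqref{V.kapparho.upper}, then split on the sign of $\Hqr(b_u)$ (equivalently your $a$) and invoke \eqref{eqt.smooth.dev.positive} in the negative case. The only cosmetic difference is that you phrase the negative-sign case as $at\ge a>-1$ while the paper rewrites the expression so the bound $\frac{\gamma}{\gamma+\delta}+\frac{\delta}{\gamma+\delta}e^{-\phiqr(x-b_u)}\le 1$ appears explicitly; these are the same estimate.
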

\begin{proof}[Proof of Lemma \ref{lemma.vdbuleq1}]
	For $x\geq b_u$, by substituting $\Vs(b_u)=\Vs(b_l)+d-\kappa$ in (\ref{V.kapparho.upper}), we have 
	\begin{equation}
	\Vs(x)=\frac{\gamma}{\gamma+\delta}\left(x-b_u+\frac{\mu}{\gamma+\delta}(1-e^{-\phiqr(x-b_u)})\right)+\Vs(b_u)\left(\frac{\gamma}{\gamma+\delta}+\frac{\delta}{\gamma+\delta}e^{-\phiqr(x-b_u)}\right). \label{Value.upper.opt}
	\end{equation}
	By taking derivative w.r.t. $x$ in (\ref{Value.upper.opt}) and using $\Vs(b_u)=\frac{\Hqr(b_u)}{\Jqr(b_u)}+\frac{\gamma}{\gamma+\delta}\frac{\mu}{\delta}$, we have
	\begin{align}
	\Vs'(x)=~&\frac{\gamma}{\gamma+\delta}+\frac{\gamma}{\gamma+\delta}\frac{\mu}{\gamma+\delta}\phiqr e^{-\phiqr(x-b_u)}+\Vs(b_u)\left(-\frac{\delta}{\gamma+\delta}\phiqr e^{-\phiqr(x-b_u)}\right)\nonumber\\
	=~&\frac{\gamma}{\gamma+\delta}+\frac{\delta}{\gamma+\delta}\phiqr e^{-\phiqr(x-b_u)} \left(\corr{\frac{\gamma}{\gamma+\delta}}\frac{\mu}{\delta}-\Vs(b_u)\right)\nonumber\\
	=~&\frac{\gamma}{\gamma+\delta}-\frac{\delta}{\gamma+\delta} e^{-\phiqr(x-b_u)}\phiqr\frac{\Hqr(b_u)}{\Jqr(b_u)}\label{Vtop.dev.1}\\
	=~&\frac{\gamma}{\gamma+\delta}+\frac{\delta}{\gamma+\delta}e^{-\phiqr(x-b_u)}-\left(\frac{\delta}{\gamma+\delta} \left(\phiqr\frac{\Hqr(b_u)}{\Jqr(b_u)}+1\right)\right)e^{-\phiqr(x-b_u)}\label{Vtop.dev.2}.
	\end{align}
	If $\Hqr(b_u)\geq 0$, from (\ref{Vtop.dev.1}) we have $\Vs'(x)\leq\frac{\gamma}{\gamma+\delta}<1$. On the other hand, if $\Hqr(b_u)<0$, from (\ref{Vtop.dev.2}), we have $\Vs'(x)<\frac{\gamma}{\gamma+\delta}+\frac{\delta}{\gamma+\delta}e^{-\phiqr(x-b_u)}\leq\frac{\gamma}{\gamma+\delta}+\frac{\delta}{\gamma+\delta}=1$ since $\phiqr\frac{\Hqr(b_u)}{\Jqr(b_u)}+1>0$ by (\ref{eqt.smooth.dev.positive}).
\end{proof}

\begin{lemma}\label{lemma.one.turning.point}
	The derivative of the value function, $\Vs'$, has at most one turning point on $[0,b_u]$.
\end{lemma}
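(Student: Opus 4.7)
The plan is to exploit the identity \eqref{eqt.smooth} of Remark~\ref{remark.reduce.to.barrier}, which gives the explicit representation $\Vs(x)=\frac{\Hqr(b_u)}{\Jqr(b_u)}\Jqr(b_u-x)-\Hqr(b_u-x)$ on $[0,b_u]$. This reduces the question of counting turning points of $\Vs'$ to counting sign changes of an explicit linear combination of scale functions.

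First, I would differentiate $\Vs$ twice on $(0,b_u)$ using the standard identities $\Zq'(y)=\delta\Wq(y)$ and $\Zqr'(y)=\phiqr\Zqr(y)-\gamma\Wq(y)$, from which $\Jqr'(y)=\frac{\delta\phiqr}{\gamma+\delta}\Zqr(y)$ and $\Hqr'(y)=\frac{\gamma}{\gamma+\delta}\Zq(y)$ follow immediately. Setting $y=b_u-x$ and applying the chain rule, a short rearrangement writes $\Vs''(x)$ as a positive constant times $\Wq(y)$ times a bracket of the form $A\cdot\frac{\Zqr(y)}{\Wq(y)}-B$, where $A=\frac{\phiqr^2}{\gamma}\cdot\frac{\Hqr(b_u)}{\Jqr(b_u)}$ has the sign of $\Hqr(b_u)$ and $B=\frac{\phiqr\Hqr(b_u)}{\Jqr(b_u)}+1$ is strictly positive by \eqref{eqt.smooth.dev.positive} in Proposition~\ref{lemma.existence.bu}.

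The second step is a short case analysis on the sign of $\Hqr(b_u)$. If $\Hqr(b_u)\leq 0$, then $A\leq 0$ while $B>0$ and $\Zqr(y)/\Wq(y)>0$, so the bracket is strictly negative throughout $(0,b_u)$; hence $\Vs'$ is strictly decreasing and admits no turning point. If $\Hqr(b_u)>0$, then $A>0$ and the equation $\Vs''(x)=0$ becomes $\Zqr(y)/\Wq(y)=B/A$, a positive constant.

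The finish is to invoke \eqref{ineqt.Z.W}, which asserts that $y\mapsto\Zqr(y)/\Wq(y)$ is weakly decreasing. Its level sets are therefore connected intervals, so as $y$ decreases (equivalently, as $x$ increases on $(0,b_u)$) the bracket moves monotonically from negative to positive and crosses zero at most once, giving at most one turning point of $\Vs'$. I do not foresee a serious obstacle: once the factorisation is set up, the argument is a sign chase driven by the two previously-established inputs \eqref{eqt.smooth.dev.positive} and \eqref{ineqt.Z.W}. The only mild subtlety is that the second derivative is taken on the open interval $(0,b_u)$, which is harmless because $\Wq$ is continuously differentiable there in both the bounded- and unbounded-variation cases.
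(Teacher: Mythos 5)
Your proposal is correct and mirrors the paper's own argument almost exactly: both start from the explicit representation \eqref{eqt.smooth}, differentiate twice using $\Jqr'=\frac{\delta\phiqr}{\gamma+\delta}\Zqr$ and $\Hqr'=\frac{\gamma}{\gamma+\delta}\Zq$, factor $\Vs''$ as a positive multiple of $\Wq(b_u-x)$ times a bracket $A\cdot\Zqr(b_u-x)/\Wq(b_u-x)-B$ with $B>0$ by \eqref{eqt.smooth.dev.positive}, split on the sign of $\Hqr(b_u)$, and invoke the monotonicity of $\Zqr/\Wq$ from \eqref{ineqt.Z.W} to conclude at most one sign change. Your observation that \eqref{ineqt.Z.W} only gives weak monotonicity (the paper parenthetically calls it strict) is a slight improvement in care, but it does not change the route.
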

\begin{proof}[Proof of Lemma \ref{lemma.one.turning.point}]
	Note that $\Jqr\p(x)=\frac{\delta}{\gamma+\delta}\phiqr \Zqr(x)$. In addition, from (\ref{eqt.smooth}), we have
	\begin{equation}
	\Vs(x)= \frac{\Hqr(b_u)}{\Jqr(b_u)}\Jqr(b_u-x)-\Hqr(b_u-x).
	\end{equation}	
	Hence, we have
	\begin{align}
	\Vs''(x)&=\frac{\Hqr(b_u)}{\Jqr(b_u)}\Jqr\pp(b_u-x)-\Hqr\pp(b_u-x)\nonumber\\
	&=\frac{\Hqr(b_u)}{\Jqr(b_u)}\phiqr\frac{\delta}{\gamma+\delta}(\phiqr \Zqr(b_u-x)-\gamma \Wq(b_u-x))-\frac{\gamma}{\gamma+\delta}\delta \Wq(b_u-x)\nonumber\\
	&=\frac{\delta}{\gamma+\delta}\left(\left(\frac{\Hqr(b_u)}{\Jqr(b_u)}\phiqr\right)\phiqr \Zqr(b_u-x)-\left(\frac{\Hqr(b_u)}{\Jqr(b_u)}\phiqr+1\right)\gamma \Wq(b_u-x)\right)\label{vdd.hleq0}\\
	&=\frac{\delta}{\gamma+\delta}\left(\frac{\Hqr(b_u)}{\Jqr(b_u)}\phiqr+1\right)\gamma \Wq(b_u-x)\left(\frac{\frac{\Hqr(b_u)}{\Jqr(b_u)}\phiqr}{\frac{\Hqr(b_u)}{\Jqr(b_u)}\phiqr+1}\frac{\phiqr}{\gamma}\frac{\Zqr(b_u-x)}{\Wq(b_u-x)}-1\right).\label{vdd.hg0}
	\end{align}
	If $\Hqr(b_u)<0$, we have from (\ref{vdd.hleq0}) that $\Vs''(x)<0$ for all $x\in[0,b_u]$ since $\frac{\Hqr(b_u)}{\Jqr(b_u)}\phiqr+1> 0$ by (\ref{eqt.smooth.dev.positive}), which means that there is no turning point in this case. On the other hand, if $\Hqr(b_u)\geq 0$, we have from (\ref{vdd.hg0})
	\begin{equation*}
	\Vs''(x)\geq 0 \iff \frac{\frac{\Hqr(b_u)}{\Jqr(b_u)}\phiqr}{\frac{\Hqr(b_u)}{\Jqr(b_u)}\phiqr+1}\frac{\phiqr}{\gamma}\frac{\Zqr(b_u-x)}{\Wq(b_u-x)}-1\geq 0,~x\in[0,b_u]. 
	\end{equation*}
	Noting from (\ref{ineqt.Z.W}) that $\Zqr(b_u-x)/\Wq(b_u-x)$ is (strictly) increasing in $x$, we can conclude that there is at most one turning point for $\Vs'$.
\end{proof}

\begin{lemma}\label{lemma.concave.bl}
	If $b_l>0$ and $\Vs'(b_l)\geq1$, then $\Vs''(x)<0$, $x\in[0,b_l]$.
\end{lemma}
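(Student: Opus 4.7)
The plan is to bootstrap from Lemma \ref{lemma.one.turning.point} combined with Lemma \ref{lemma.vdbuleq1}, using a contradiction argument on the location of the single possible turning point of $V_s'$. First I would inspect the explicit representation \eqref{vdd.hg0} of $V_s''$ obtained in the proof of Lemma \ref{lemma.one.turning.point}, and use the monotonicity \eqref{ineqt.Z.W} to note that the map $x \mapsto Z_{\gamma,\delta}(b_u-x)/W_\delta(b_u-x)$ is strictly increasing on $[0,b_u)$. Combined with \eqref{eqt.smooth.dev.positive}, this means the sign of $V_s''$ on $[0,b_u]$ can only evolve from (strictly) negative to non-negative as $x$ grows; hence there exists $c \in [0,b_u]$ (possibly $c = b_u$, which corresponds to the case $H_{\gamma,\delta}(b_u)<0$ and gives $V_s''<0$ on the whole interval) such that $V_s''(x)<0$ for $x \in [0,c)$ and $V_s''(x)\ge 0$ for $x \in [c,b_u]$.

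Given this structure, the proof reduces to showing $b_l < c$. Suppose for contradiction that $b_l \ge c$. Then $V_s'' \ge 0$ on $[b_l,b_u]$, so $V_s'$ is non-decreasing on that interval and in particular
\[
V_s'(b_u) \ge V_s'(b_l) \ge 1.
\]
By the smoothness of $V_s$ at $b_u$ (Lemma \ref{lemma.smooth}), $V_s'(b_u)=V_s'(b_u+)$, and Lemma \ref{lemma.vdbuleq1} yields $V_s'(b_u+)<1$, a contradiction. Therefore $b_l<c$, and since $V_s''(x)<0$ strictly on $[0,c)$ (the bracket in \eqref{vdd.hg0} is strictly negative there by strict monotonicity of $Z_{\gamma,\delta}/W_\delta$, and $W_\delta(b_u-x)>0$ whenever $b_u-x>0$, which holds on $[0,b_l]\subset[0,b_u)$), we conclude $V_s''(x)<0$ for all $x \in [0,b_l]$ as required.

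The main obstacle I anticipate is confirming that the strict inequality $V_s''<0$ holds on the whole closed interval $[0,b_l]$ (rather than merely $V_s''\le 0$), which requires separately handling the case $H_{\gamma,\delta}(b_u)<0$ via \eqref{vdd.hleq0} (where the bracket is manifestly negative on $[0,b_u)$ because the leading coefficient is negative while the subtracted $W_\delta$-term has a positive coefficient thanks to \eqref{eqt.smooth.dev.positive}), and the case $H_{\gamma,\delta}(b_u)\ge 0$ via the factorisation \eqref{vdd.hg0}; one must also remember that for bounded-variation paths $W_\delta(0)=0$, which is harmless here since $b_l<b_u$. The hypothesis $b_l>0$ is used only implicitly, in that the conclusion would be vacuous otherwise, and it is consistent with the dichotomy in Remark \ref{R_vdashkappa}.
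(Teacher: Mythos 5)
Your proof is correct and follows essentially the same route as the paper: use the single-turning-point structure from Lemma \ref{lemma.one.turning.point}, combine $\Vs'(b_l)\ge 1$ with $\Vs'(b_u)<1$ from Lemma \ref{lemma.vdbuleq1} to rule out $\Vs''\ge 0$ ever occurring on $[b_l,b_u]$, and conclude concavity on $[0,b_l]$. Your phrasing with the threshold $c$ is just a tidier packaging of the paper's three-case enumeration. One small point in your favour: the paper's own proof only asserts $\Vs''(x)\le 0$ on $[0,b_l]$ in its concluding sentence, leaving the strict inequality claimed in the lemma statement implicit; your closing observation---that the bracket in \eqref{vdd.hg0} is \emph{strictly} negative on $[0,c)$ and $\Wq(b_u-x)>0$ for $x\in[0,b_l]\subset[0,b_u)$, together with the $\Hqr(b_u)<0$ case via \eqref{vdd.hleq0}---explicitly closes that gap, which is a worthwhile sharpening even if the underlying idea is unchanged.
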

\begin{proof}[Proof of Lemma \ref{lemma.concave.bl}]
	From the proof of Lemma \ref{lemma.one.turning.point}, we know that $\Vs''(x)$ cannot go from positive to negative when $x$ increases. Hence, we can conclude that there are only \corr{three} possibilities regarding the sign of $\Vs''$ on $[0,b_u]$:
	\begin{enumerate}
		\item $\Vs''(x)\geq 0$ for all $x\in[0,b_u]$;
		\item There exists a point $x_t\in(0,b_u)$ such that 
		\begin{equation*}
		\begin{cases}
		\Vs''(x)<0,~x\in[0,x_t)\\
		\Vs''(x_t)=0\\
		\Vs''(x)>0,~x\in(x_t,b_u]
		\end{cases};
		\end{equation*} 
		\item $\Vs''(x)\leq 0$ for all $x\in[0,b_u]$.
	\end{enumerate}
	Case (1) is impossible since it implies \corr{$1\leq \Vs'(b_l)\leq \Vs'(b_u)$}, which contradicts to Lemma \ref{lemma.vdbuleq1}. For case (2), unless $x_t>b_l$ we can use the same argument as in Case (1) to conclude that it is impossible. Therefore, we have $\Vs''(x)\leq 0$ for $x\in[0,b_l]$. Case (3) directly leads to $\Vs''(x)\leq 0$ for $x\in[0,b_l]$.
\end{proof}

\begin{lemma}\label{lemma.8.6}
	For $x\geq\kappa$, we have
	\begin{equation}
	(\mathcal{L}-\delta)\Vo(x)+\gamma \left( \max_{\kappa\leq l \leq x} \left\{ (l-\kappa) + \Vo(x-l) -\Vo(x)\right\}\right)_+= 0.
	\end{equation}
\end{lemma}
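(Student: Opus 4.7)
The plan is to split the domain $\{x \geq \kappa\}$ into the ``no-pay'' region $[\kappa, b_u)$ and the ``pay'' region $[b_u, \infty)$ of $\pio$, verify the generator equation on each piece, and match the decision actually implemented by $\pio$ with the maximiser inside $(\cdot)_+$. The first step is to pin down the shape of $\Vo'$ on $[0, b_u]$. By construction of $\pio$, either $b_l = 0$ with $\Vo'(0) \leq 1$, or $b_l > 0$ with $\Vo'(b_l) = 1$. Combined with Lemmas \ref{lemma.vdbuleq1}, \ref{lemma.one.turning.point} and \ref{lemma.concave.bl}, a case analysis of the three sign patterns of $\Vo''$ listed in the proof of Lemma \ref{lemma.concave.bl} rules out those incompatible with the boundary condition $\Vo'(b_l) \vee \Vo'(0) = 1 > \Vo'(b_u)$ and yields $\Vo'(x) \leq 1$ on $[b_l, b_u]$, $\Vo'(x) \geq 1$ on $[0, b_l]$ (vacuous when $b_l = 0$), and $\Vo'(x) < 1$ on $[b_u, \infty)$. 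In particular, in the ``decreasing-then-increasing'' case, the turning point must lie in $[b_l, b_u]$, and $\Vo' \leq 1$ follows on both sides of it from the boundary values.

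Next, I would evaluate the supremum explicitly. Setting $g(l) := (l-\kappa) + \Vo(x-l) - \Vo(x)$, the identity $g'(l) = 1 - \Vo'(x - l)$ together with the shape above shows that the maximiser on $[\kappa, x]$ is $l^\ast = (x - b_l) \vee \kappa$. For $\kappa \leq x < b_u$, the integral identity $\kappa = \int_{b_l}^{b_u}(1 - \Vo'(u))\,du$ of Remark \ref{R_kappadistance} together with $1 - \Vo' \geq 0$ on $[b_l, b_u]$ yields $\int_{b_l}^x(1 - \Vo'(u))\,du \leq \kappa$, i.e.\ $g(l^\ast) \leq 0$; the subcase $x < b_l + \kappa$ reduces to $g(\kappa) = \Vo(x-\kappa) - \Vo(x) \leq 0$ by monotonicity of $\Vo$. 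For $x \geq b_u$, writing
\begin{equation*}
g(x - b_l) = (x - b_u) - \int_{b_u}^x \Vo'(u)\,du
\end{equation*}
and invoking Lemma \ref{lemma.vdbuleq1} shows $g(l^\ast) \geq 0$, with equality only at $x = b_u$.

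Finally, I would establish the generator identity by conditioning on the first Poisson decision time $T_1 \sim \mathrm{Exp}(\gamma)$ and applying the strong Markov property: since $\pio$ takes no action on $\{X < b_u\}$ and pays the surplus down to $b_l$ on $\{X \geq b_u\}$, a standard dynamic programming argument yields $(\mathcal{L}-\delta)\Vo(x) = 0$ on $[0, b_u)$ and $(\mathcal{L}-\delta)\Vo(x) + \gamma[(x-b_l-\kappa) + \Vo(b_l) - \Vo(x)] = 0$ on $[b_u, \infty)$. Alternatively, these can be verified directly from the explicit formulas in Theorem \ref{lemma.value.fcn} and the scale-function derivatives of Section \ref{section.definition.scale.function}, the smoothness established in Section \ref{section.smoothness} guaranteeing that $\mathcal{L}\Vo$ is well defined. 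Combining with the previous step then gives the stated identity in both regions. The main obstacle is the shape analysis of $\Vo'$ on $[b_l, b_u]$: it requires simultaneously exploiting all of Lemmas \ref{lemma.vdbuleq1}--\ref{lemma.concave.bl} together with the boundary condition $\Vo'(b_l) = 1$ to exclude the unfavourable monotonicity patterns, which is precisely what allows the global maximiser to be located at $l^\ast = x - b_l$.
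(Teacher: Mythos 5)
Your proposal is correct and follows essentially the same route as the paper: establish the generator equations $(\mathcal{L}-\delta)\Vo = 0$ on $[0,b_u)$ and $(\mathcal{L}-\delta)\Vo + \gamma[(x-b_l-\kappa)+\Vo(b_l)-\Vo(x)] = 0$ on $[b_u,\infty)$, pin down the shape of $\Vo'$ via Lemmas \ref{lemma.vdbuleq1}--\ref{lemma.concave.bl} and the boundary condition from $\pio$, locate the maximiser $l^\ast = (x-b_l)\vee\kappa$, and check its sign across $b_u$. The only cosmetic difference is that you route the sign check for $g(l^\ast)$ on $[\kappa,b_u)$ through the integral identity $\kappa = \int_{b_l}^{b_u}(1-\Vo')$ of Remark \ref{R_kappadistance}, whereas the paper derives the equivalent inequality chain (\ref{verify.eqt.2}) directly from the smoothness condition $V_\kappa(b_u)-V_\kappa(b_l)=b_u-b_l-\kappa$ and monotonicity of $1-\Vo'$; both are the same computation in disguise.
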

\begin{proof}[Proof of Lemma \ref{lemma.8.6}]
	From equations (4.20), (4.21), (4.18) and (4.22) in \citet*{PeYa16}, we have
	\begin{align}
	(\mathcal{L}-\delta)\Jqr(x)&=0,\\
	(\mathcal{L}-\delta)\Hqr(x)&=0,\\
	(\mathcal{L}-\delta)(x-b_u)&=\mu-\delta(x-b_u),\\
	(\mathcal{L}-\delta)e^{-\phiqr (x-b_u)}&=\gamma e^{-\phiqr (x-b_u)}.
	\end{align}
	By following exactly the same steps, it can be shown that
	\begin{align}
	(\mathcal{L}-\delta)\Vo(x)&=0, &x\leq b_u,\label{eqt.int.dif.bu.bl.lower}\\
	(\mathcal{L}-\delta)\Vo(x)+\gamma\big[x-b_l-\kappa+\Vo(b_l)-\Vo(x)\big]&=0, &x> b_u.\label{eqt.int.dif.bu.bl.upper}
	\end{align}
	
	Next, from the definition of $\pio$ \corr{and Lemma \ref{lemma.vdbuleq1}}, when $b_l=0$, we have that $\Vo'(x)<1$ for $x\in(0,b_u]$ in all 3 cases stated in the proof of Lemma \ref{lemma.concave.bl} regarding the sign of $\Vo''(x)$. When $b_l>0$, only cases 2 and 3 are possible. Hence, we have $\Vo'(x)<1$ for $x\in(b_l,b_u]$ by Lemma \ref{lemma.concave.bl}. Combining with Lemma \ref{lemma.vdbuleq1}, we have 
	\begin{equation}\label{verify.eqt.1}
	\begin{cases}
	\Vo'(x)\leq 1,~x\geq 0,~&\mbox{if }b_l=0,\\
	\Vo'(x)\begin{cases}
	&>1,~x<b_l\\
	&=1,~x=b_l\\
	&<1,~x>b_l
	\end{cases},~&\mbox{if }b_l>0.
	\end{cases}
	\end{equation}
	and subsequently
	\begin{equation}\label{verify.eqt.2}
	\begin{cases} x-b_l-\kappa + \Vo(b_l)- \Vo(x)<0, &x \in [b_l,b_u)\\
	b_u-b_l-\kappa + \Vo(b_l) - \Vo(b_u)=0  \\
	x-b_l-\kappa +\Vo(b_l) -\Vo(x)>0, &x\in (b_u,\infty)\end{cases}.
	\end{equation}
	
	Next, we verify that for a fixed $x$, $l=(x-b_l)1_{\{x\geq b_u\}}$ maximises 
	\begin{equation}
	P(l):={(l-\kappa)}_+ + \Vo(x-l) - \Vo(x),~x\geq\kappa,
	\end{equation}
	with the support of $P$ being $\{0\}\cup[\kappa,x]$. 
	
	First, we consider the support $P$ on $[\kappa,x]$. Since $P$ is a continuous differentiable function (as $\Vo$ is) and the support is bounded, the maximum value of the function is attended at either $P'=0$ or at the boundaries. Now, by taking the derivative of $P$, we have
	\begin{align}
	P'(l)&=1-\Vo'(x-l)\nonumber\\
	&\begin{cases}
	<0,~~l>x-b_l\\
	=0,~~l=x-b_l\\
	>0,~~l<x-b_l
	\end{cases}
	\end{align}
	thanks to (\ref{verify.eqt.1}).
	Hence, the maximum value of $P$ on $[\kappa,x]$ is attained at $l^*=(x-b_l)1_{\{x-b_l\geq\kappa\}}+\kappa1_{\{x-b_l<\kappa\}}$ since $P$ is strictly decreasing on $l>x-b_l$. Now, we should compare the value of $P$ at $0$ and $l^*$ to find the maximum. Clearly, $P(0)=0$. When $x-b_l<\kappa$, $P(l^*)=P(\kappa)=\Vo(x-\kappa) - \Vo(x)\leq0=P(0)$, since $\Vo'(x)\geq0$ for all $x\geq 0$. Thus, the maximum value attains at $l=0$. On the other hand, if $x-b_l\geq\kappa$, then $P(l^*)=x-b_l-\kappa+\Vo(b_l)- \Vo(x)\geq 0=\corr{P(0)}$ if and only if $x\geq b_u$ by (\ref{verify.eqt.2}). 
	In summary, $P$ attains its maximum when $l=0$ if $x<b_u$, and $l=x-b_l$ if $x\geq b_u$.
	
	Therefore, for $x\geq\kappa$, we have
	\begin{align}
	&\left( \max_{\kappa\leq l \leq x} \left\{ (l-\kappa)  + \Vo(x-l) - \Vo(x)\right\}\right)_+\nonumber\\
	=&\begin{cases} \Vo(x)-\Vo(x), &x \in [\kappa, b_u) \\
	x-b_l-\kappa+\Vo(b_l)-\Vo(x),&x \in [b_u, \infty) \end{cases} \nonumber\\
	=&\begin{cases} 0,&x \in [\kappa, b_u) \\
	x-b_l-\kappa+\Vo(b_l)-\Vo(x),&x \in [b_u, \infty) \end{cases}
	\end{align}
	and hence 
	\begin{equation}\label{eqt.conditon.2.3}
	(\mathcal{L}-\delta)\Vo(x)+\gamma \left( \max_{\kappa\leq l \leq x} \left\{ (l-\kappa) + \Vo(x-l) -\Vo(x)\right\}\right)_+= 0,~x\geq\kappa
	\end{equation}
	by (\ref{eqt.int.dif.bu.bl.lower}) and (\ref{eqt.int.dif.bu.bl.upper}). Note that $(\mathcal{L}-\delta)\Vo(b_u)$ is  well defined as $\Vo(x)$ is smooth.
\end{proof}

We are now ready to prove Proposition \ref{thm}.

\begin{proof}[Proof of Proposition \ref{thm}]
	It suffices to show that $\Vo$ satisfies all 3 conditions in Lemma \ref{verification.lemma}.
	
	Since both \corr{$\Hqr^\prime(0)$ and $\Jqr^\prime(0)$} are finite, combining with (\ref{eqt.smoothbubl.Dgeq0}) and Lemma \ref{lemma.vdbuleq1}, we have
	\begin{equation}\label{der.bdd}
	0\leq \Vo'(x)< \Vo'(0)+1,~x\geq 0,
	\end{equation}
	showing that $\Vo'$ is finite. This together with Proposition \ref{lemma.existence.bu} shows the first condition. The second condition is satisfied by (\ref{der.bdd}) \corr{together with the fact that $V_*(0)=0$ ; see the discussion after \eqref{def.value.strategy}}. The third condition is a simple consequence of (\ref{eqt.conditon.2.3}) and (\ref{eqt.int.dif.bu.bl.lower}). 
\end{proof}

From Lemma \ref{lemma.bl.exist} and Proposition \ref{thm}, we know that there exists a pair of ($b_u,b_l$) such that $\pi_{b_u,b_l}=\pio$ is optimal. The following lemma states that the choice of ($b_u,b_l$) is unique.

\begin{lemma}\label{lemma.unique.bl}
	There is only one pair of $(b_u,b_l)$ such that the strategy $\pi_{b_u,b_l}$ qualifies as $\pio$.
\end{lemma}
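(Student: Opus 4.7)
The plan is to exploit Proposition \ref{thm}: any strategy qualifying as $\pio$ produces the value function $v_\kappa$, so two such strategies yield the \emph{same} value function, and the pair $(b_u,b_l)$ can be read off uniquely from this common function.

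Concretely, suppose $(b_u^{(1)},b_l^{(1)})$ and $(b_u^{(2)},b_l^{(2)})$ both qualify as $\pio$. By Proposition \ref{thm}, $V_\kappa(\cdot;\pioo) = V_\kappa(\cdot;\piot) = v_\kappa$; I would denote this common function by $\Vo$. To recover $b_l$ from $\Vo$, I would invoke the derivative structure \eqref{verify.eqt.1} established inside the proof of Lemma \ref{lemma.8.6}: for each $i\in\{1,2\}$, either $b_l^{(i)}=0$ and $\Vo'(x)\leq 1$ on $[0,\infty)$, or $b_l^{(i)}>0$ with $\Vo'(x)>1$ on $[0,b_l^{(i)})$, $\Vo'(b_l^{(i)})=1$, and $\Vo'(x)<1$ on $(b_l^{(i)},\infty)$. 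Since the dichotomy is dictated by the single number $\Vo'(0)$, both $b_l^{(i)}$ must fall in the same case; and in the positive case the strictly monotonic crossing of level $1$ by $\Vo'$ forces $b_l^{(1)}=b_l^{(2)}$.

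Once $b_l^{(1)}=b_l^{(2)}$, Proposition \ref{lemma.existence.bu} gives uniqueness of the corresponding $b_u$ through the smoothness condition \eqref{smoothness.condition}, so $b_u^{(1)}=b_u^{(2)}$ as well, and we are done.

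The main obstacle is essentially already dealt with: the strict crossing of $\Vo'$ through the level $1$ that the middle step relies on is the content of Lemmas \ref{lemma.vdbuleq1}, \ref{lemma.one.turning.point} and \ref{lemma.concave.bl}, distilled in \eqref{verify.eqt.1}. Without that strict structure several values of $b_l$ could a priori satisfy $V_s'(b_l)=1$ for the corresponding smooth strategies; it is precisely optimality (through Proposition \ref{thm} and the verification lemma) that prevents this.
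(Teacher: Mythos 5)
Your proof is correct and follows essentially the same route as the paper's: invoke Proposition \ref{thm} to identify the two value functions with $v_\kappa$, use the sign structure of $\Vo'-1$ recorded in \eqref{verify.eqt.1} to pin down $b_l$ uniquely, and then appeal to Proposition \ref{lemma.existence.bu} to pin down $b_u$. The only cosmetic difference is that you phrase the $b_l$-step as a ``unique crossing of level $1$'' whereas the paper runs the two inequalities $b_l^{(1)}\le b_l^{(2)}$ and $b_l^{(2)}\le b_l^{(1)}$ explicitly; these are the same argument.
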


\begin{proof}[Proof of Lemma \ref{lemma.unique.bl}]
	If $b_l=0$, clearly it is unique. Otherwise, suppose we have \corr{two} $\pio$ strategies, namely $\pioo$ and $\piot$. From Proposition \ref{thm}, we have
	\begin{equation}
	V_\kappa(x;\pioo)=V_\kappa(x;\piot)=v_{\kappa}(x).
	\end{equation}
	As a result, by using the definition of $\pioo$ and $\piot$, we have
	\begin{equation}
	V_\kappa^\prime(b_l^{(1)};\pioo)=V_\kappa^\prime(b_l^{(2)};\piot)=1.
	\end{equation}
	
	From equation (\ref{verify.eqt.1}), we have $V_\kappa'(x;\pioo)<1$ for $x>b_l^{(1)}$, which gives $b_l^{(2)}\leq b_l^{(1)}$. Similarly, we have $b_l^{(1)}\leq b_l^{(2)}$. Therefore, we have $b_l^{(1)}= b_l^{(2)}$. Recall from Proposition \ref{lemma.existence.bu} that for a given $b_l$ there is a unique $b_u$ to achieve smoothness.. Hence, $b_l^{(1)}= b_l^{(2)}$ implies $b_u^{(1)}= b_u^{(2)}$. In other words, there is only one $\pio$.

\end{proof}

We are now ready to state our main result:

\begin{theorem}
	Denote $(b_u^*,b_l^*)$ the barriers of \corr{the unique} $\pio$, 
	the strategy $\pi_{b_u^*,b_l^*}$ is optimal, i.e.
	\[ V_{\kappa}(x;\pi_{b_u^*,b_l^*}) = v_{\kappa}(x) ~\mbox{for all}~ x\geq 0.\]
\end{theorem}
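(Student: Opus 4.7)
The plan is to assemble this as a short synthesis theorem that collects the work done in Sections \ref{S.8} and \ref{section.strategy.optimal}, rather than introducing new machinery. The statement has three distinct pieces: (i) existence of a pair $(b_u^*,b_l^*)$ with $\pi_{b_u^*,b_l^*}=\pio$, (ii) uniqueness of such a pair, and (iii) optimality of the associated strategy. Each piece has already been established, so the proof will essentially be a pointer argument.

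First, I would handle existence. Proposition \ref{lemma.existence.bu} guarantees that, for every $b_l\in[0,b^*]$, there exists a unique $b_u>b_l+\kappa$ rendering $V_\kappa(\cdot;\pi_{b_u,b_l})$ smooth, so the family of smooth strategies $\pi^{\kappa,s}_{b_u,b_l}$ is non-empty and parametrised continuously by $b_l$. Then, depending on whether $V_\kappa'(0;\pi^{\kappa,s}_{b_u,0})\leq 1$ or $>1$, either $b_l^*=0$ already defines a valid $\pio$, or Lemma \ref{lemma.bl.exist} supplies a strictly positive $b_l^*\in(0,b^*]$ satisfying $V_\kappa'(b_l^*;\pi^{\kappa,s}_{b_u^*,b_l^*})=1$. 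In both cases the corresponding $b_u^*$ is determined by Proposition \ref{lemma.existence.bu}, so the candidate pair exists.

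Second, uniqueness of $(b_u^*,b_l^*)$ is exactly the content of Lemma \ref{lemma.unique.bl}: two candidate optimal strategies must agree on $b_l^*$ (via the sign analysis of $V_\kappa'-1$ on either side of $b_l$ coming from \eqref{verify.eqt.1}), after which Proposition \ref{lemma.existence.bu} forces the associated $b_u^*$ values to coincide as well.

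Third, optimality is Proposition \ref{thm}, which establishes that any $\pio$ satisfies the three sufficient conditions of the verification Lemma \ref{verification.lemma}, hence $V_\kappa(\cdot;\pio)=v_\kappa$. Applying this to the unique pair $(b_u^*,b_l^*)$ yields the claimed identity. There is no genuine obstacle left, since the hard analytical work (the smooth-fitting equation, the derivative control $V_s'(x)<1$ for $x\geq b_u$, the sign analysis of $V_s''$, and the HJB inequality verification) has all been carried out earlier; the only care needed is to note that Lemmas \ref{lemma.bl.exist}, \ref{lemma.unique.bl} and Proposition \ref{thm} reference each other, so one must cite them in the order existence $\to$ optimality $\to$ uniqueness (uniqueness in Lemma \ref{lemma.unique.bl} itself uses Proposition \ref{thm}, but not this final theorem, so no circularity arises).
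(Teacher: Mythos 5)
Your proposal is correct and takes essentially the same route as the paper: the theorem is a synthesis statement that just collects Lemma \ref{lemma.bl.exist} (existence), Lemma \ref{lemma.unique.bl} (uniqueness), and Proposition \ref{thm} (optimality via the verification lemma), and your remark about the dependency ordering (uniqueness relying on Proposition \ref{thm} but not on this final theorem) is a valid and slightly more careful observation than the paper's own one-line assembly.
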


\begin{proof}
Proposition \ref{thm} states that the family of optimal $(b_u,b_l)$ strategies is optimal. Lemma \ref{lemma.bl.exist} states that the family of optimal $(b_u,b_l)$ strategies has at least one element while Lemma \ref{lemma.unique.bl} states that the family of optimal $(b_u,b_l)$ strategies has at most one element. All together, it means there exists a unique (optimal) periodic $(b_u,b_l)$ strategy which is optimal in the sense of (\ref{eqt.optimal}). 
\end{proof}

\section{Numerical illustrations}\label{section.illustration}

In this section, a diffusion process with Poissonian upward exponential jumps is used, i.e.
\begin{equation}
X(t) = x-ct+\sigma W(t)+\sum_{i=1}^{N(t)}G_i,
\end{equation}
where $\{G_i,i\in\mathbb{N}\}$ is a collection of i.i.d. exponential random variables with mean $1/\beta$, $\{W(t);t\geq 0\}$ is a standard diffusion process and $\{N(t):t\geq 0\}$ is a Poisson process with rate $\lambda$ such that $\mathbb{E}(N(t))=\lambda t$.  The baseline parameters used are $c=0.0027$, $\sigma=0.09$, $\lambda=1$, $\beta=33.33$, $\gamma=0.04$, $\delta=0.003$ and $\kappa=0.06$. 

In the terminology of (\ref{def.snlp.2}), the Laplace exponent in our illustration is
$$\psi_Y(\theta)=c\theta+\frac{\sigma^2}{2}\theta^2+\int_{(0,\infty)}(e^{-\theta}-1)\lambda(\beta e^{-\beta s})ds,$$
which is slightly different but can be rewritten in the form of (\ref{def.snlp.2}) easily. This is further explicitly evaluated as 
$$\psi_Y(\theta)-q=c\theta+\frac{\sigma^2}{2}\theta^2+\lambda\frac{\beta}{\beta+\theta}-\gamma-q.$$
It is easy to show that $\psi_Y$ is a rational function with 3 distinct roots and therefore its reciprocal can be rewritten using partial fraction as 
$$\frac{1}{\psi_Y(\theta)-q}=\sum_{j=1}^3\frac{1}{\psi_Y^\prime(r_j^{(q)})}\frac{1}{\theta-r_j^{(q)}},$$
where $r_j^{(q)}$, $j=1,2,3$ are the roots of $\psi_Y(\theta)-q=0$.
The $q$-scale function $W_q$ can then be computed explicitly by inverting the Laplace transform. All other scale functions can then be computed explicitly afterwards.

To find the optimal barriers $(b_u^*,b_l^*)$, we make use of \corr{Proposition} \ref{lemma.existence.bu} and Lemma \ref{lemma.bl.exist}. To be more specific, we perform the following:
\begin{enumerate}
	\item Find $b^*$ using (\ref{eqt.Q}). Specifically, if $Q(0)\geq 0$, then set $b^*=0$, otherwise, solve $b^*$ such that \corr{$Q(b^*)= 0$}. This can be done by (1) trying a large enough $b$ such that $Q(b)> 0$ following by (2) a bisection method on the range $[0,b]$.
	\item Write a function on $b_l\geq 0$ to output $b_u$ from \corr{Proposition} \ref{lemma.existence.bu} with a similar method as the previous step (using range $[\max(\kappa,b^*),b]$ for large enough $b$), then calculate the derivative of the value function at $b_l$ and return this number. Say we call this function $G$.
	\item Find $b_l^*$ using Lemma \ref{lemma.bl.exist}. Specifically, if $G(0)\leq 1$, then we set $b_l^*=0$, otherwise we can obtain $b_l^*$ by solving $G(b_l^*)=1$ via a bisection method on the range $[0,\corr{b^*}]$. Use Proposition \ref{lemma.existence.bu} to calculate $b_u^*$ from $b_l^*$.
\end{enumerate}

\begin{remark}
	We remark that gradient descend type of methods typically do not work well because a realatively large increment of the parameters (barriers) only results in a small change of the objective function (i.e. plateau). Therefore, analytic methods (such as used in this paper) are needed. Perhaps more importantly, this shows that in practice one typically have more flexibility to deviate from the optimal strategy to incorporate other considerations.
\end{remark}

\begin{remark}
	Note that $\gamma$, $\delta$ and $\lambda$ are forces of dividend decisions, continuous interest, and gain occurrence, per unit of time, respectively. Therefore, the value of $\gamma$ needs to be compared with those of $\lambda$ and $\delta$. Similarly, the value of $\kappa$ is in currency unit. Along those lines, it needs to be commensurate with those of $c$, $\sigma$ and $1/\beta$. 
\end{remark}

\begin{remark}	
	The numerical values chosen are inspired by the following fictitious business. A real estate business which on average sells 50 houses a year and pays biannual dividend, i.e. $\lambda=1$ and $\gamma=2/50=0.04$ and the time unit is (roughly) a week. Hence, $\delta=0.003$ implies an annual force of interest of $15\%$. In addition, for each house sold, the commission gained is on average $0.03$ unit, i.e. $\beta=33.33$. (For instance, typical commission rates in Sydney are $2\%$ and the median house price is about \$1,150,000, so that $1/\beta$ would be $\$23,000/\ln 2\approx\$33,000$ or 0.03 million) Furthermore, to illustrate the riskiness of the business, we assume $c=0.027$ and $\sigma=0.09$ such that the cost of the business is $90\%$ of its expected gain. 
	Lastly, the size of $\kappa$ is assumed to be $0.06$, approximately 2 weeks of cost. 
	

\end{remark}

Illustrations include the impact,  on the optimal dividend strategy, of the transaction costs and the interplay of dividend decision frequency and force of interest. 

\subsection{Impact of the fixed transaction costs $\kappa$}

\begin{figure}[H]
	\centering
	\includegraphics[width = 0.6\textwidth]{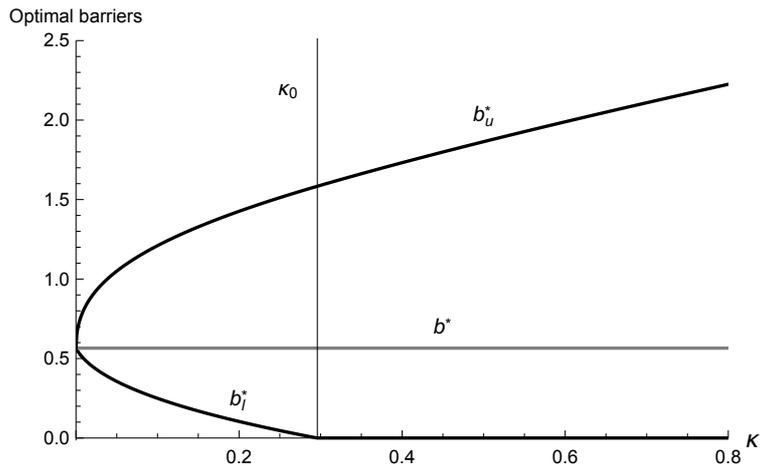}
	\caption{Impact of the fixed transaction costs }	\label{Fig.kappa} 
\end{figure}

Figure \ref{Fig.kappa} plots the optimal barriers $b_u^*$ and $b_l^*$ against fixed transaction costs $\kappa$. We also denote $b^*$ as the optimal barrier when there is no transaction costs, see e.g. \citet*{PeYa16} for details. This figure illustrates that when the transaction cost $\kappa$ increases from $0$, the optimal barrier without transaction costs $b^*$ splits into the upper and the lower barrier $b_u^*$ and $b_l^*$, respectively. As $\kappa$ further increases, the distance between both barriers increases, too; see also Remark \ref{R_kappadistance}. When the transaction costs are more than a certain quantity which we denote $\kappa_0$, then $b_l^*=0$ and a liquidation at first opportunity is optimal. This illustrates that despite the profitability ($\mu$), the business would not have good prospect if the costs of paying the shareholders ($\kappa$) are too high. Lemma \ref{lemma.kappa0.exist} establishes (partially) the existence of $\kappa_0$. 
\begin{lemma}\label{lemma.kappa0.exist}
	There exists a threshold $\kappa_0\in\mathbb{R}^+\cup\{\infty\}$ such that $\kappa\geq\kappa_0\iff b_l^*=0$.
\end{lemma}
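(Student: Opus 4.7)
The plan is to reduce the statement to a monotonicity property of a single scalar function of $\kappa$, and then locate the threshold. First, I would combine Remark \ref{R_vdashkappa} with Lemma \ref{lemma.bl.exist} to rewrite the condition $b_l^*(\kappa)=0$ as $h(\kappa)\leq 1$, where
\[
h(\kappa):=V_\kappa'(0;\pi^{\kappa,s}_{b_u(\kappa),0})=V_0'(0;\pi_{b_u(\kappa)})
\]
and $b_u(\kappa)$ is the unique solution of $\Gamma_0(b_u)=0$ from Proposition \ref{lemma.existence.bu} (the second equality invoking \eqref{eqt.smooth}). Using $V_0(0;\pi_b)=0$ together with the identity $F'(b)=1-Q(b)J_{\gamma,\delta}'(b)/J_{\gamma,\delta}(b)$ with $F:=H_{\gamma,\delta}/J_{\gamma,\delta}$ from the proof of Lemma \ref{Lemma.VdLeq1AtBstar}, the expression further simplifies to $h(\kappa)=F'(b_u(\kappa))J_{\gamma,\delta}(b_u(\kappa))$.

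Next, I would argue that $b_u(\cdot)$ is continuous (Proposition \ref{lemma.existence.bu}) and strictly increasing: implicit differentiation of the smoothness condition $F(b_u(\kappa))=b_u(\kappa)-\kappa-\gamma\mu/((\gamma+\delta)\delta)$ yields $b_u'(\kappa)=1/(1-F'(b_u(\kappa)))$, which is positive because $1-F'(b_u)=Q(b_u)J_{\gamma,\delta}'(b_u)/J_{\gamma,\delta}(b_u)>0$ for $b_u>b^*$. It then remains to show that $V_0'(0;\pi_b)$ is non-increasing in $b$ on $(b^*,\infty)$, so that $h=V_0'(0;\pi_{b_u(\cdot)})$ is continuous and non-increasing. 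A direct computation starting from $V_0'(0;\pi_b)=F'(b)J_{\gamma,\delta}(b)$ and using $Q'=F'$ gives
\[
\frac{d}{db}V_0'(0;\pi_b)=-\frac{Q(b)}{J_{\gamma,\delta}(b)}\bigl[J_{\gamma,\delta}''(b)J_{\gamma,\delta}(b)-(J_{\gamma,\delta}'(b))^2\bigr],
\]
so, with $Q(b)>0$ on our range, the question reduces to whether $J_{\gamma,\delta}$ is log-convex on $(b^*,\infty)$. I would attack this by exploiting $J_{\gamma,\delta}'(b)=\tfrac{\delta\phiqr}{\gamma+\delta}\Zqr(b)$ together with the identity $\Zqr'(b)=\phiqr\Zqr(b)-\gamma\Wq(b)$, reducing $J_{\gamma,\delta}''J_{\gamma,\delta}-(J_{\gamma,\delta}')^2$ to a comparison between $\phiqr\Zqr(b)\Zq(b)$ and $(\gamma+\delta)\Wq(b)J_{\gamma,\delta}(b)$; the monotonicity of $\Zqr/\Wq$ inherited from \eqref{ineq.Zqr.W} is the key lever.

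Finally, with $h$ continuous and non-increasing, setting $\kappa_0:=\inf\{\kappa\geq 0:h(\kappa)\leq 1\}$ under the convention \eqref{eq.convention.inf.empty} for the infimum of an empty set gives $h(\kappa)\leq 1\iff\kappa\geq\kappa_0$, and hence $b_l^*(\kappa)=0\iff\kappa\geq\kappa_0$, as required. The main obstacle is the log-convexity-type step for $J_{\gamma,\delta}$; if global log-convexity on $(b^*,\infty)$ resists a clean proof, a viable fall-back is a \emph{first-crossing} argument, showing only that whenever $h(\kappa^*)=1$ one has $h'(\kappa^*)\leq 0$. This is weaker than global monotonicity but still ensures that $\{h\leq 1\}$ is a terminal interval, because the smoothness condition at $\kappa^*$ couples $\Hqr(b_u(\kappa^*))/\Jqr(b_u(\kappa^*))$ to $b_u(\kappa^*)$ in a way that prevents $h$ from re-ascending above $1$; this local-to-global route would deliver the threshold structure claimed by the lemma.
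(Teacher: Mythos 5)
Your approach is essentially the paper's, repackaged: the paper establishes that $b-\Hqr(b)/\Jqr(b)$ is strictly increasing on $[b^*,\infty)$ (its Property~1, your implicit-differentiation step $b_u'(\kappa)=1/(1-F'(b_u))>0$), that $\partial_b V_0(h;\pi_b)<0$ for $0<h<b$ (its Property~2), and then deduces that raising $\kappa$ raises $b_u$ and hence can only lower $V_0'(0;\pi_{b_u})$, so $\{b_l^*=0\}$ is a terminal interval in $\kappa$. Your route computes $\frac{d}{db}V_0'(0;\pi_b)=-\tfrac{Q(b)}{\Jqr(b)}\bigl[\Jqr''(b)\Jqr(b)-(\Jqr'(b))^2\bigr]$ directly and aims for global monotonicity of $h(\kappa)$; since $\Jqr'\propto\Zqr$, log-convexity of $\Jqr$ is precisely the statement that $\Zqr/\Jqr$ is increasing, which is exactly the monotonicity the paper invokes in its Property~2 (via $\Zqr/\Jqr=\tfrac{\gamma+\delta}{\delta}\big(1+\tfrac{\gamma}{\delta}\Zq/\Zqr\big)^{-1}$). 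So the two proofs reduce to the same scale-function identity.

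The one genuine slip is in how you propose to close the log-convexity step. Your reduction of $\Jqr''\Jqr-(\Jqr')^2\geq 0$ to the comparison $\phiqr\Zqr(b)\Zq(b)\geq(\gamma+\delta)\Wq(b)\Jqr(b)$ is correct, but you then cite the monotonicity of $\Zqr/\Wq$ from \eqref{ineq.Zqr.W} as the key lever, and that is not what closes the argument. Expanding $\Jqr=\tfrac{\delta}{\gamma+\delta}\Zqr+\tfrac{\gamma}{\gamma+\delta}\Zq$ and using $\Zqr'=\phiqr\Zqr-\gamma\Wq$, $\Zq'=\delta\Wq$, the difference factors as
\begin{equation*}
\phiqr\Zqr(b)\Zq(b)-(\gamma+\delta)\Wq(b)\Jqr(b)=\Zq(b)\Zqr'(b)-\Zqr(b)\Zq'(b)=\Zq(b)^2\,\frac{d}{db}\Big(\frac{\Zqr(b)}{\Zq(b)}\Big),
\end{equation*}
so what you actually need is \eqref{eqt.ZqrZq.increasing} ($\Zqr/\Zq$ increasing), not \eqref{ineq.Zqr.W} ($\Zqr/\Wq$ decreasing, which does not bear on this factorisation). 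With that substitution, the ``obstacle'' you flag is already resolved in the paper, no fall-back first-crossing argument is needed, and your proof goes through.
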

\begin{proof}[Proof of Lemma \ref{lemma.kappa0.exist}]
	See Appendix \ref{Appendix.D}.
\end{proof}

\subsection{Impact of the time parameters $\gamma$ and $\delta$}
Figure \ref{fig_sensitivity_delta} plots the optimal barriers $b_u^*$, $b_l^*$ and $b^*$ (the optimal periodic barrier when $\kappa=0$) against the frequency parameter of dividend decisions $\gamma$. By looking at the graphs from left to right, we can see the convergence of $b_u$, $b_l$ and $b^*$ to their ``continuous'' counterparts (that is, the optimal impulse strategy when dividends can be paid at any time; see references below), as indicated by the 3 horizontal lines. These limits can be calculated, for example, using the results from \citet*[without and with transaction costs, respectively]{BaKyYa12,BaKyYa13}. The expected present value of dividends under $b^*$ (in this case with $\kappa=0$) can itself be calculated using the results from \citet*{PeYa16}. The curves for $b_u^*$ and $b_l^*$ developed in this paper are calculated using Proposition \ref{lemma.existence.bu} and Lemma \ref{lemma.bl.exist}, respectively.
\begin{figure}[H]
	\centering
	\subfigure[$\delta=0.003$]{\includegraphics[width=0.31\textwidth]{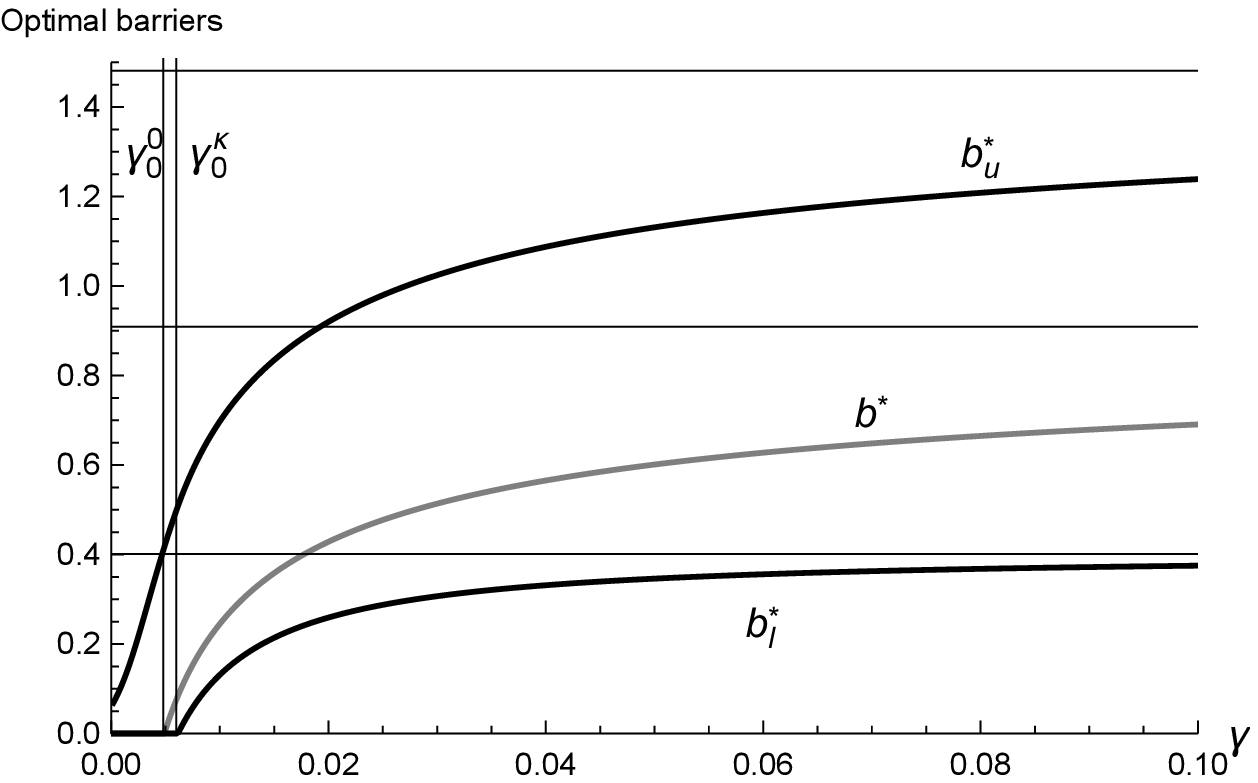}}
	\subfigure[$\delta=0.004$]{\includegraphics[width=0.31\textwidth]{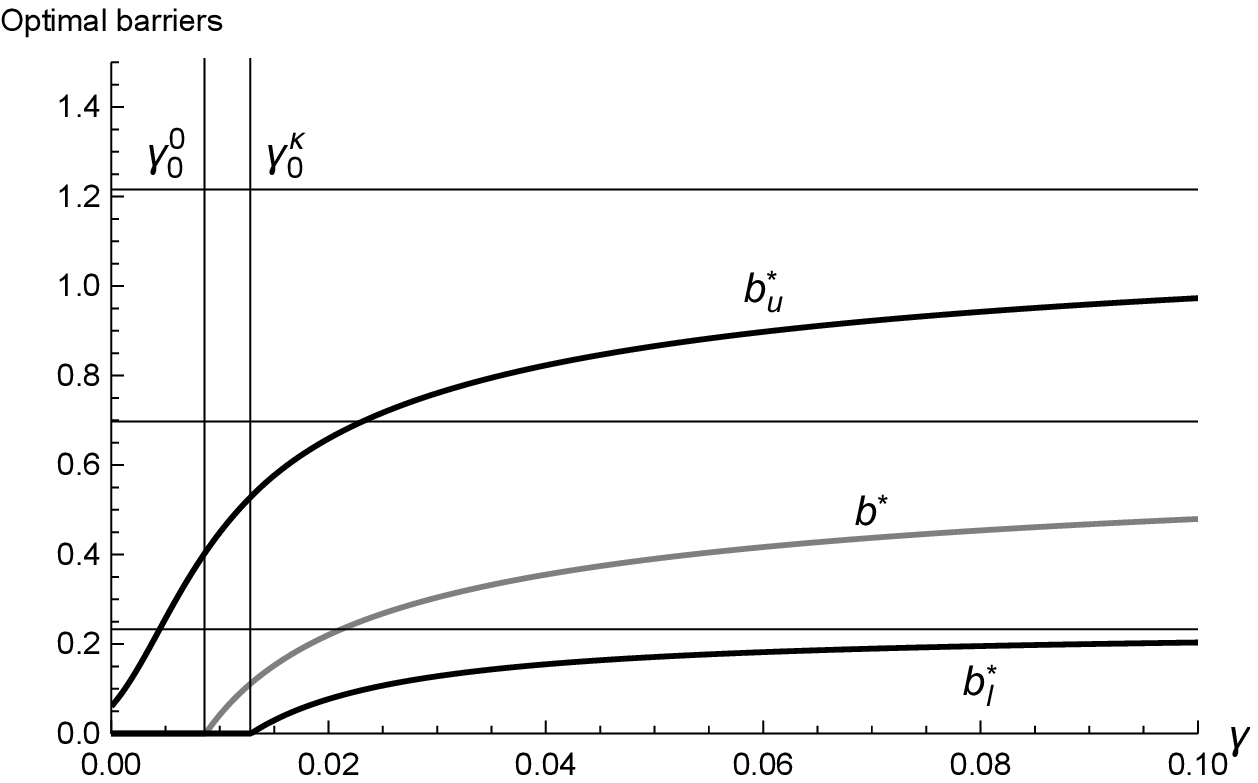}}
	\subfigure[$\delta=0.005$]{\includegraphics[width=0.31\textwidth]{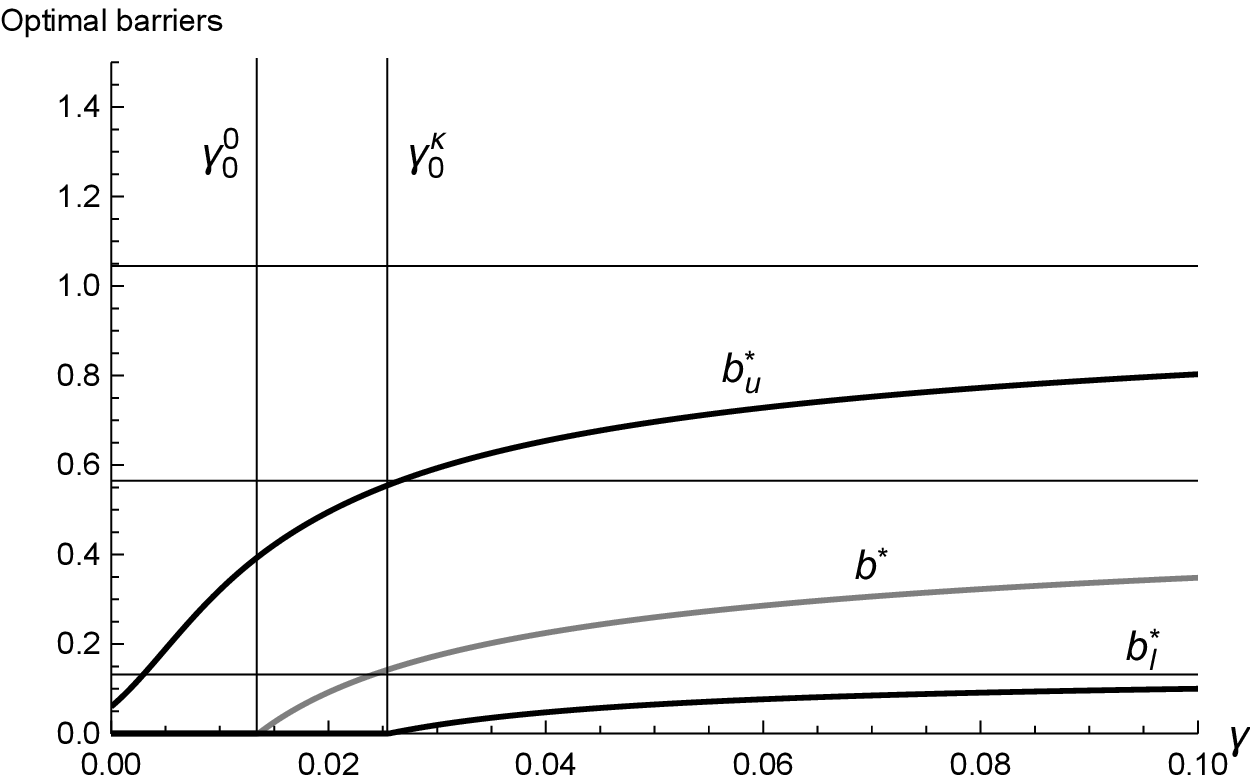}}
	\caption{Sensitivity to time parameters $\gamma$ and $\delta$} 
	\label{fig_sensitivity_delta}
\end{figure}

Again, one can see that the  barrier $b^*$ is sandwiched between both levels of the $(b_u^*,b_l^*)$ strategy. Furthermore, as the `time impatience' parameter $\delta$ increases, it becomes more important to pay more dividends earlier (as compared to avoid ruin), and the barrier levels decrease. However, at the same time, the intensity at which dividends decisions are made $\gamma$ (in a way, how often they can be paid) also need to be higher, lest a liquidation-at-first-opportunity strategy becomes optimal. Indeed, we can see that there is a threshold for $\gamma$ such that below this threshold a liquidation-at-first-opportunity strategy is optimal, that is, $b_l^*=0$. This threshold $\gamma_0^\kappa$ (in presence of fixed transaction costs $\kappa$)  is obtained in a similar way to $\kappa_0$ (see Lemma \ref{lemma.kappa0.exist}). Obviously, the introduction of transaction costs $\kappa=0.6$ pushes this threshold upwards, which explains why $b^*$ leaves the $\gamma$ axis earlier (at $\gamma_0^0$).

Generally speaking, when $b_l^*=0$ (when a liquidation-at-first-opportunity strategy is optimal), the difference $b_u-b_l=b_u$ is strictly larger than the transaction costs $\kappa$ to allow for a strictly positive final dividend (upon liquidation) even when the current surplus is below $\kappa$ (not enough to pay the liquidation cost); see Appendix \ref{Appendix.C} for details. However, when $\gamma$ becomes small (and one will have to wait longer to be able to liquidate), one would give up the buffer to exchange for a higher chance of exiting the business. Eventually, when $\gamma$ tends to $0$, $b_u^*$ decreases to exactly $\kappa~(=0.06)$ so that $b_u^*$ intersects the  $y$-axis at $\kappa$.

Furthermore, it is interesting to note that  the `periodic' $(b_u^*,b_l^*)$ is strictly below the split barrier (impulse) strategy of the `continuous' case (when dividends can be paid at any time)---indicated with horizontal gray barriers in Figure \ref{fig_sensitivity_delta}. In the case of $b_u^*$ this is because the strategy compensates for the cost of having to wait another period if dividends are not paid immediately. This difference is larger (and convergence slower) as $\delta$ increases, which makes sense. The convergence of $b_l^*$ seems to be quicker than that of $b_u^*$, simply because in its case the danger associated with being too close to 0 is likely to overpower the force described earlier in this paragraph (and which would push it down).

\section{Concluding remarks}\label{section.conclusion}
In this paper, we determined the form of the optimal periodic dividend strategy when there are fixed transaction costs, when the dividend decisions are Poissonian, and where  the underlying model is a spectrally positive L\'evy process. Using exiting identities and the strong Markov properties, we were able to compute the value function of a periodic $(b_u,b_l)$ strategy concisely in terms of scale functions.

We proceeded to identify the best strategy among the class of periodic $(b_u,b_l)$ strategies, namely the periodic $(b_u^*,b_l^*)$ strategy, in 2 steps and verified its optimality. A number of new insights were gained while doing so. In particular, the difference in the barriers $b_u-b_l$ is always strictly greater than the transaction cost $\kappa$ such that the net dividend is at least $b_u-b_l-\kappa>0$, i.e. a buffer. Moreover, despite the profitability $\mu$, when the transaction costs $\kappa$ are too high, it is optimal to close the business.

Finally, we numerically illustrated the convergence of our results with that of \citet*{BaKyYa13} and \citet*{PeYa16}, as well as the impact of the transaction costs and the frequency of dividend decisions on the optimal barriers.

This paper is a significant step towards answering a number of open questions, including: (i) can hybrid (periodic and continuous) dividend strategies \citep*[see][]{AvTuWo16} be optimal in presence of fixed transaction costs, and if so, under what conditions? (ii) is the optimal periodic dividend strategy still a $(b_u,b_l)$ strategy when inter-dividend decision times are Erlang$(n)$ distributed, $n\ge2$? (iii) are $(b_u,b_l)$ strategies also optimal in spectrally \emph{negative} L\'evy risk processes?

\section*{Acknowledgments}

\corr{The authors are indebted to two anonymous referees, whose comments led to significant improvements of the manuscript.}

This paper was presented at the Australasian Actuarial Education and Research Symposium in December 2017 in Sydney (Australia), at the 10th Conference in Actuarial Science \& Finance on Samos (Greece) in June 2018, at the $22^\text{nd}$ International Congress on Insurance: Mathematics and Economics (Sydney, Australia) in July 2018, and at the $4^\text{th}$ European Actuarial Journal Conference (Leuven, Belgium) in September 2018. The authors are grateful for constructive comments received from colleagues who provided constructive comments on the paper.

This research was  supported under Australian Research Council's Linkage \corrs{(LP130100723) and Discovery (DP200101859) Projects funding schemes.} Hayden Lau acknowledges financial support from an Australian Postgraduate Award and supplementary scholarships provided by the UNSW Australia Business School. The views expressed herein are those of the authors and are not necessarily those of the supporting organisations. 

\section*{References}

\bibliographystyle{elsarticle-harv}

\bibliography{libraries2} 

\begin{thebibliography}{39}
\expandafter\ifx\csname natexlab\endcsname\relax\def\natexlab#1{#1}\fi
\expandafter\ifx\csname url\endcsname\relax
  \def\url#1{\texttt{#1}}\fi
\expandafter\ifx\csname urlprefix\endcsname\relax\def\urlprefix{URL }\fi

\bibitem[{Albrecher et~al.(2011)Albrecher, Cheung, and Thonhauser}]{AlChTh11a}
Albrecher, H., Cheung, E. C.~K., Thonhauser, S., 2011. Randomized observation
  periods for the compound poisson risk model: dividends. {ASTIN} Bulletin
  41~(2), 645--672.

\bibitem[{Albrecher et~al.(2016)Albrecher, Ivanovs, and Zhou}]{AlIvZh16}
Albrecher, H., Ivanovs, J., Zhou, X., 2016. Exit identities for {L}\'evy
  processes observed at poisson arrival times. Bernoulli 22, 1364--1382.

\bibitem[{Albrecher and Thonhauser(2009)}]{AlTh09}
Albrecher, H., Thonhauser, S., 2009. Optimality results for dividend problems
  in insurance. RACSAM Revista de la Real Academia de Ciencias; Serie A,
  Mathem{\'a}ticas 100~(2), 295--320.

\bibitem[{Avanzi et~al.(2013)Avanzi, Cheung, Wong, and Woo}]{AvChWoWo13}
Avanzi, B., Cheung, E. C.~K., Wong, B., Woo, J.-K., 2013. On a periodic
  dividend barrier strategy in the dual model with continuous monitoring of
  solvency. Insurance: Mathematics and Economics 52~(1), 98--113.

\bibitem[{Avanzi et~al.(2007)Avanzi, Gerber, and Shiu}]{AvGeSh07}
Avanzi, B., Gerber, H.~U., Shiu, E. S.~W., 2007. Optimal dividends in the dual
  model. Insurance: Mathematics and Economics 41~(1), 111--123.

\bibitem[{Avanzi et~al.(2016)Avanzi, Tu, and Wong}]{AvTuWo16}
Avanzi, B., Tu, V.~W., Wong, B., 2016. On the interface between optimal
  periodic and continuous dividend strategies in the presence of transaction
  costs. {ASTIN} Bulletin 46~(3), 709--746.

\bibitem[{Avram et~al.(2017)Avram, Grahovac, and Vardar-Acarceren}]{AvGrVa17}
Avram, F., Grahovac, D., Vardar-Acarceren, C., 2017. The {$W, Z$} scale
  functions kit for first passage problems of spectrally negative l\'evy
  processes, and applications to the optimization of dividends. Tech. rep.,
  arXiv preprint arXiv:1706.06841.

\bibitem[{Avram et~al.(2007)Avram, Palmowski, and Pistorius}]{AvPaPi07}
Avram, F., Palmowski, Z., Pistorius, M.~R., 2007. On the optimal dividend
  problem for a spectrally negative {L}{\'e}vy process. Annals of Applied
  Probability 17~(1), 156--180.

\bibitem[{Bayraktar and Egami(2008)}]{BaEg08}
Bayraktar, E., Egami, M., 2008. Optimizing venture capital investments in a
  jump diffusion model. Mathematical Methods of Operations Research 67~(1),
  21--42.

\bibitem[{Bayraktar et~al.(2013)Bayraktar, Kyprianou, and Yamazaki}]{BaKyYa12}
Bayraktar, E., Kyprianou, A.~E., Yamazaki, K., 2013. On optimal dividends in
  the dual model. {ASTIN} Bulletin 43~(3), 359--372.

\bibitem[{Bayraktar et~al.(2014)Bayraktar, Kyprianou, and Yamazaki}]{BaKyYa13}
Bayraktar, E., Kyprianou, A.~E., Yamazaki, K., 2014. Optimal dividends in the
  dual model under transaction costs. Insurance: Mathematics and Economics 54,
  133--143.

\bibitem[{Bertoin(1998)}]{Ber98}
Bertoin, J., 1998. L{\'e}vy Processes. Cambridge Tracts in Mathematics.
  Cambridge University Press, Cambridge, UK.

\bibitem[{Borch(1967)}]{Bor67}
Borch, K., 1967. The theory of risk. Journal of the Royal Statistical Society.
  Series B (Methodological) 29~(3), 432--467.

\bibitem[{B{\"u}hlmann(1970)}]{Buh70}
B{\"u}hlmann, H., 1970. Mathematical Methods in Risk Theory. Grundlehren der
  mathematischen Wissenschaften. Springer-Verlag, Berlin, Heidelberg, New York.

\bibitem[{Chaumont and Doney(2005)}]{ChDo05}
Chaumont, L., Doney, R., 2005. On {L\'evy} processes conditioned to stay
  positive. Electronic Journal of Probability 10~(28), 948--961.

\bibitem[{Chen et~al.(2017)Chen, Yang, and Yongxia}]{ChYaZh17}
Chen, P., Yang, H., Yongxia, Z., 2017. Optimal periodic dividend and capital
  injection problem for spectrally positive l\'evy processes. Insurance:
  Mathematics and Economics 74, 135--146.

\bibitem[{Cheung and Wong(2017)}]{ChWo17}
Cheung, E., Wong, J., 2017. On the dual risk model with parisian implementation
  delays in dividend payments. European Journal of Operational Research 257,
  159--173.

\bibitem[{Cheung et~al.(2017)Cheung, Yang, and Zhang}]{ChYaZh17P}
Cheung, E., Yang, H., Zhang, Z., 2017. L\'evy insurance risk process with
  poissonian taxation. Scandinavian Actuarial Journal 2017~(1), 51--87.

\bibitem[{Cram{\'e}r(1930)}]{Cra30}
Cram{\'e}r, H., 1930. On the mathematical theory of risk. Skand. Jubilee
  Volume. Stockholm.

\bibitem[{de~Finetti(1957)}]{deF57}
de~Finetti, B., 1957. Su un'impostazione alternativa della teoria collettiva
  del rischio. Transactions of the XVth International Congress of Actuaries 2,
  433--443.

\bibitem[{Furrer(1998)}]{Fur98}
Furrer, H., 1998. Risk processes perturbed by alpha-stable {L\'evy} motion.
  Scandinavian Actuarial Journal 1998~(1), 59--74.

\bibitem[{Jeanblanc-Picqu{\'e} and Shiryaev(1995)}]{JeSh95}
Jeanblanc-Picqu{\'e}, M., Shiryaev, A.~N., 1995. Optimization of the flow of
  dividends. Russian Mathematical Surveys 50~(2), 257--277.

\bibitem[{Kuznetsov et~al.(2013)Kuznetsov, Kyprianou, and Rivero}]{KuKyRi13}
Kuznetsov, A., Kyprianou, A.~E., Rivero, V., 2013. The theory of scale
  functions for spectrally negative {L}\'evy processes. In: {L}\'evy Matters
  II, Springer Lecture Notes in Mathematics. Springer.

\bibitem[{Kyprianou(2006)}]{Kyp06}
Kyprianou, A.~E., 2006. Introductory lectures on fluctuations of {L}\'evy
  processes with applications. Springer, Berlin.

\bibitem[{Kyprianou(2014)}]{Kyp14}
Kyprianou, A.~E., 2014. Introductory Lectures on Fluctuations of L\'evy
  Processes with Applications, 2nd Edition. Springer-Verlag: Berlin.

\bibitem[{Loeffen(2008{\natexlab{a}})}]{Loe08a}
Loeffen, R., 2008{\natexlab{a}}. An optimal dividends problem with transaction
  costs for spectrally negative {L\'evy} processes. Radon Institute for
  Computational and Applied Mathematics, Austrian Academy of Sciences.

\bibitem[{Loeffen(2009)}]{Loe09}
Loeffen, R., 2009. An optimal dividends problem with a terminal value for
  spectrally negative l{\'e}vy processes with a completely monotone jump
  density. Journal of Applied Probability 46~(1), 85--98.

\bibitem[{Loeffen et~al.(2014)Loeffen, Renaud, and Zhou}]{LoReZh14}
Loeffen, R., Renaud, J., Zhou, X., 2014. Occupation times of intervals until
  first passage times for spectrally negative {L\'evy} processes. Stochastic
  Processes and their Applications 124~(3), 1408--1435.

\bibitem[{Loeffen(2008{\natexlab{b}})}]{Loe08}
Loeffen, R.~L., 2008{\natexlab{b}}. On optimality of the barrier strategy in de
  {F}inetti's dividend problem for spectrally negative {L}\'evy processes.
  Annals of Applied Probability 18~(5), 1669--1680.

\bibitem[{Lundberg(1909)}]{Lun09}
Lundberg, F., 1909. {\"U}ber die {T}heorie der {R}{\"u}ckversicherung.
  Transactions of the VIth International Congress of Actuaries 1, 877--948.

\bibitem[{Mazza and Rulli{\`e}re(2004)}]{MaRu04}
Mazza, C., Rulli{\`e}re, D., 2004. A link between wave governed random motions
  and ruin processes. Insurance: Mathematics and Economics 35~(2), 205--222.

\bibitem[{Noba et~al.(2018)Noba, P\'erez, Yamazaki, and Yano}]{NoPeYaYa17}
Noba, K., P\'erez, J.-L., Yamazaki, K., Yano, K., 2018. On optimal periodic
  dividend strategies for {L\'evy} processes. Insurance: Mathematics and
  Economics 80, 29--44.

\bibitem[{Pardo et~al.(2015)Pardo, P\'erez, and Rivero}]{PaPeRi15}
Pardo, J., P\'erez, J., Rivero, V., 2015. The excursion measure away from zero
  for spectrally negative {L\'evy} processes. Annales de l'Institut Henri
  Poincar\'e Probabilit\'es et Statistiques 54~(1).

\bibitem[{P\'erez and Yamazaki(2018)}]{PeYa16b}
P\'erez, J., Yamazaki, K., 2018. Mixed periodic-classical barrier strategies
  for {L}\'evy risk processes. Risks 6~(2).

\bibitem[{P\'erez and Yamazaki(2017)}]{PeYa16}
P\'erez, J.-L., Yamazaki, K., 2017. On the optimality of periodic barrier
  strategies for a spectrally positive {L}\'evy process. Insurance: Mathematics
  and Economics 77, 1--13.

\bibitem[{Protter(2005)}]{Pro05}
Protter, P., 2005. Stochastic Integration and Differential Equations, 2nd
  Edition. Springer-Verlag, Berlin-Heidelberg.

\bibitem[{Tu(2017)}]{Tu17}
Tu, V., 2017. On optimal period dividend strategies in actuarial surplus
  models. Ph.D. thesis, UNSW.

\bibitem[{Yang et~al.(2013)Yang, Yang, and Zhang}]{YaYaZh13}
Yang, H., Yang, H., Zhang, Z., 2013. On a {Sparre} {Andersen} risk model
  perturbed by a spectrally negative {L\'evy} process. Scandinavian Actuarial
  Journal 2013~(3), 213--239.

\bibitem[{Yao et~al.(2011)Yao, Yang, and Wang}]{YaYaWa11}
Yao, D., Yang, H., Wang, R., 2011. Optimal dividend and capital injection
  problem in the dual model with proportional and fixed transaction costs.
  European Journal of Operational Research 211, 568--576.

\end{thebibliography}

\appendix

\section{Proof of Equation \eqref{eqt.ZqrZq.increasing}}\label{Appendix.Formula.ZqrZq}

\corrs{From equation (8.9) in \citet*{Kyp14}, we have for $y\geq 0$
$$\Zq(y)-\frac{\delta}{\phi_\delta}\Wq(y)={\mathbb{E}}(e^{-\delta {\tau}_{Y,y,0}^- };{\tau}_{Y,y,0}^-<\infty)\geq 0$$
and therefore by taking derivative of the function  $y\mapsto\Zq(y)e^{-\phi_\delta y}$, we can conclude that such function is decreasing. Hence we have}
\begin{equation*}
\Zq(x)e^{-\phi_\delta x}-\Zq(y)e^{-\phi_\delta y}\geq 0
\end{equation*}
for $0\leq x\leq y$. Further noting that for $0\leq x\leq y$ and $u\geq 0$ the function $u\mapsto \Wq(x+u)/\Wq(y+u)$ is an increasing function in $u$ and using \corrs{$\lim_{x\rightarrow\infty}e^{-\phi_\delta}\Wq(x)=1/{\psi_Y^\prime(\phi_\delta)}$ \citep*[see Lemma 3.3 in][]{KuKyRi13}} we get
\begin{align*}
&\Zq(x)-\Zq(y)\frac{\Wq(x+u)}{\Wq(y+u)}\\
> ~& \Zq(x)-\Zq(y)\sup_{u\geq 0}\frac{\Wq(x+u)}{\Wq(y+u)}\\
=~&\Zq(x)-\Zq(y)e^{-\phi_\delta(y-x)}\\
\geq ~& 0.
\end{align*}
This shows that for $u\geq 0$
$$\frac{\Wq(x+u)}{\Zq(x)}< \frac{\Wq(y+u)}{\Zq(y)},~0\leq x< y,$$
i.e. the function $s\mapsto \Wq(s+u)/\Zq(s)$ is an increasing fucntion. Consequently, by using the second expression in \eqref{Zqr.2} we have that 
$$
\frac{\Zqr(x)}{\Zq(x)}=\gamma\int_0^\infty e^{-\phiqr u}\frac{\Wq(x+u)}{\Zq(x)}du
$$
is an increasing function in $x$. This gives
$$
\frac{\partial}{\partial x}\frac{\Zqr(x)}{\Zq(x)}> 0
$$
as desired.

\section{Proof of Lemma \ref{verification.lemma}}\label{Appendix.A}

We express the L\'evy process $X$ using L\'evy-It\^o decomposition as 
\begin{equation}\label{X.Levy.Decomposition}
	X(t)=\corr{-ct}+\sigma B(t)+\int_{0+}^{t}\int_{|z|\geq 1}z\Prm +\limep \int_{0+}^{t}\int_{\epsilon<|z|< 1}z\big(\Prm-\Pi(dz)ds\big),
\end{equation}
where $\{B(t);t\geq 0\}$ is a standard Brownian motion and $\mathcal{N}$ is a Poisson random measure \corr{(independent of the Brownian motion $B=\{B(t);t\geq 0\}$)} in the measure space $(\corr{[0,\infty)\times[0,\infty)},\mathcal{B}[0,\infty)\times\mathcal{B}[0,\infty), ds\times\Pi(dz))$ (see e.g. \cite{Kyp14} Chapter 2 for details). 

For any $\pi\in\Pi_\kappa$, the corresponding surplus process $X^\pi(t)$ is a semi-martingale which takes the form 
\begin{equation}
	X^\pi(t)=X(t)-D^\pi(t).
\end{equation}
Here we note that $D^\pi$ is an adapted pure jump process which does not jump at the same time as $X$ a.s. \corr{In addition, Condition 2 implies that $H'$ is bounded on sets $[1/n,n]$ for all $n\in\mathbb{N}$.}

Let $(T_n)_{n\in\mathbb{N}}$ be the sequence of stopping times defined by $T_n:=\inf\{t>0:~X^\pi(t)>n~\mbox{or}~X^\pi(t)<1/n\}$, by applying the change of variables formula (Theorem II.32 of \citet*{Pro05}) to the stopped process $\{e^{-\delta(t\wedge T_n)}H(X^\pi(t\wedge T_n));t\geq 0\}$, conditioning on $X(0)=x$, we have
\begin{align}
	&e^{-\delta(t\wedge T_n)}H(X^\pi(t\wedge T_n))-H(x)\nonumber\\
	=~&\intte (-\delta)H(\xps)ds+\intte H'(\xps)dX^\pi(s)+\frac{1}{2}\intte H''(\xps)d[X^\pi,X^\pi]^c(s)\nonumber\\
	&+\sumte [H(X^\pi(s))-H(\xps)-H'(\xps)\Delta X^\pi(s)]\nonumber\\
	=~&\intte (-\delta)H(\xps)ds+\intte H'(\xps)dX(s)+\intte \frac{\sigma^2}{2}H''(\xps) ds\nonumber\\
	&+\intte [H(\xps+z)-H(\xps)-H'(\xps)z]\Prm\nonumber\\
	&+\sumte [H(X^\pi(s))-H(\xps)]1_{\{\Delta D^\pi(s)>0\}}.
\end{align}
Plugging in the formula of $X$ from (\ref{X.Levy.Decomposition}) \corr{and collecting the terms for $(\mathcal{L}-\delta)$}, we get
%
\begin{align}
	&e^{-\delta(t\wedge T_n)}H(X^\pi(t\wedge T_n))-H(x)\nonumber\\
	=~&\intte (\mathcal{L}-\delta)H(\xps)ds+\sumte [H(X^\pi(s))-H(\xps)]1_{\{\Delta D^\pi(s)>0\}}+M_X(t\wedge T_n),\label{ver.eqt1}
\end{align}
where 
\begin{align}
	M_X(t\wedge T_n)=~&\intte \sigma H'(\xps) dB(s)\nonumber\\
	&+\limep\int_{0+}^{t\wedge T_n}\int_{\epsilon<|z|< 1}e^{-\delta s}H'(\xps)z\big(\Prm-\Pi(z)ds\big)\nonumber\\
	&+\intte [H(\xps+z)-H(\xps)-H'(\xps)z1_{\{|z|< 1\}}]\big(\Prm-\Pi(dz)ds)
\end{align}

We further express (\ref{ver.eqt1}) as 
\begin{align}
	&e^{-\delta(t\wedge T_n)}H(X^\pi(t\wedge T_n))-H(x)\nonumber\\
	&+\intte [(\Delta D^\pi(s)-\kappa)1_{\{\Delta D^\pi(s)>0\}}+H(\xps -\Delta D^\pi(s))-H(\xps)]d N_\gamma(s)\nonumber\\
	=~&\intte \Big((\mathcal{L}-\delta)H(\xps)+\gamma [(\Delta D^\pi(s)-\kappa)1_{\{\Delta D^\pi(s)>0\}}+H(\xps -\Delta D^\pi(s))-H(\xps)] \Big)ds\nonumber\\
	&+M_X(t\wedge T_n)-\intte (\Delta D^\pi(s)-\kappa)1_{\{\Delta D^\pi(s)>0\}}d N_\gamma(s)\nonumber\\
	&+\intte [(\Delta D^\pi(s)-\kappa)1_{\{\Delta D^\pi(s)>0\}}+H(\xps -\Delta D^\pi(s))-H(\xps)]d\big( N_\gamma(s)-\gamma ds\big).\label{ver.eqt2}
\end{align}
By denoting $M(t\wedge T_n)=M_X(t\wedge T_n)+M_\gamma(t\wedge T_n)$, where
\begin{equation}
	M_\gamma(t\wedge T_n)=\intte [(\Delta D^\pi(s)-\kappa)1_{\{\Delta D^\pi(s)>0\}}+H(\xps -\Delta D^\pi(s))-H(\xps)]d\big( N_\gamma(s)-\gamma ds\big),
\end{equation}
we can rewrite (\ref{ver.eqt2}) as 
\begin{align}
	&e^{-\delta(t\wedge T_n)}H(X^\pi(t\wedge T_n))-H(x)\nonumber\\
	=~&\intte \Big((\mathcal{L}-\delta)H(\xps)+\gamma [(\Delta D^\pi(s)-\kappa)1_{\{\Delta D^\pi(s)>0\}}+H(\xps -\Delta D^\pi(s))-H(\xps)] \Big)ds\nonumber\\
	&+M(t\wedge T_n)-\intte (\Delta D^\pi(s)-\kappa)1_{\{\Delta D^\pi(s)>0\}}d N_\gamma(s)\nonumber
\end{align}
where by condition 3 we have 
\begin{equation*}
	e^{-\delta(t\wedge T_n)}H(X^\pi(t\wedge T_n))-H(x)\leq M(t\wedge T_n)-\intte (\Delta D^\pi(s)-\kappa)1_{\{\Delta D^\pi(s)>0\}}d N_\gamma(s)
\end{equation*}
or equivalently
\begin{align*}
	H(x)\geq~& \intte (\Delta D^\pi(s)-\kappa)1_{\{\Delta D^\pi(s)>0\}}d N_\gamma(s)+ e^{-\delta(t\wedge T_n)}H(X^\pi(t\wedge T_n))-M(t\wedge T_n)\\
	\geq~&\intte (\Delta D^\pi(s)-\kappa)1_{\{\Delta D^\pi(s)>0\}}d N_\gamma(s)-M(t\wedge T_n)
\end{align*}
since $e^{-\delta(t\wedge T_n)}H(X^\pi(t\wedge T_n))\geq 0$ by Condition 2.

Condition 1 implies that $M(t\wedge T_n)$ is a zero mean martingale, hence by taking expectation we have
\begin{equation*}
	H(x)\geq \mathbb{E}_x[\intte (\Delta D^\pi(s)-\kappa)1_{\{\Delta D^\pi(s)>0\}}d N_\gamma(s)].
\end{equation*}
Finally, note that $T_n\rightarrow \tau^\pi$ a.s. and that by Condition 2 $H\geq0$. By applying Fatou's lemma, we have
\begin{align}
	H(x)\geq~& \limtn\mathbb{E}_x[\intte (\Delta D^\pi(s)-\kappa)1_{\{\Delta D^\pi(s)>0\}}d N_\gamma(s)]\nonumber\\
	\geq~&\mathbb{E}_x[\int_{0+}^{\tau^\pi}e^{-\delta s} (\Delta D^\pi(s)-\kappa)1_{\{\Delta D^\pi(s)>0\}}d N_\gamma(s)]\nonumber\\
	=~&V_\kappa(x;\pi).
\end{align}

\section{Proof of Theorem \ref{lemma.value.fcn}}\label{Appendix.B}

We proceed using exiting identities \citep*[from][]{AlIvZh16} together with strong Markov properties, which is a standard probabilistic argument.
In particular, we will borrow some results from \citet*{ChYaZh17}, where the expected values of interest are computed. 

Due to the nature of the strategy, the surplus process is controlled only when $X^{\pi_{b_u,b_l}}\geq b_u$. Therefore, we derive the expressions of $V_\kappa(x;\pi_{b_u,b_l})$ for $x\geq b_u$ and $x< b_u$ separately. We start with the case when $x\geq b_u$. We now define the following quantities:
\begin{align}
\tau^+_b &=\inf\{t\geq 0:X(t)>b\}\\
\tau^-_a &=\inf\{t\geq 0:X(t)<a\}\\
T^+_b&=\min\{T_i:X(T_i)>b\}\\
e_q &\sim ~\mbox{Exponential \corr{random variable} with mean}~1/q.
\end{align}

The value function for $x\geq b_u$ is given by the following lemma.
\begin{lemma}\label{lemma.2.2}
	For $x\geq b_u$, we have 
	\begin{equation}
	V_\kappa(x;\pi_{b_u,b_l}) = \frac{\gamma}{\gamma+\delta} \Auqr(x-b_u;d)+e^{-\phiqr (x-b_u)} V_\kappa(b_u;\pi_{b_u,b_l}) + \frac{\gamma}{\gamma+\delta} (1-e^{-\phiqr (x-b_u)}) V_\kappa(b_l;\pi_{b_u,b_l}). \label{Value.upper.non.opt}
	\end{equation}
\end{lemma}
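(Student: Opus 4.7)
The plan is to exploit the strong Markov property at the first ``event'' after time $0$: either a dividend decision arrives (at the exponential time $T_1$) before the process falls to $b_u$, or the process hits $b_u$ continuously from above first. Since $X$ is spectrally positive, all downward movement is continuous, so starting from $x\geq b_u$ the process reaches $b_u$ exactly (no undershoot) at time $\tau^-_{b_u}:=\inf\{t\geq 0:X(t)<b_u\}$ (with the convention of \eqref{eq.convention.inf.empty}); moreover $\mathbb{P}_x(T_1=\tau^-_{b_u})=0$, so the events $\{T_1<\tau^-_{b_u}\}$ and $\{\tau^-_{b_u}<T_1\}$ partition the sample space up to null sets. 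At $T_1$, on the event $\{T_1<\tau^-_{b_u}\}$, the controlled surplus satisfies $X^{\pi_{b_u,b_l}}(T_1-)=X(T_1)\geq b_u$, so a lump-sum dividend $X(T_1)-b_l$ is paid (net value $X(T_1)-b_l-\kappa$) and the controlled process restarts at $b_l$. On the event $\{\tau^-_{b_u}<T_1\}$, no dividend has been paid and, by continuity, the process restarts at $b_u$.

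Applying the strong Markov property with this decomposition yields
\begin{align*}
V_\kappa(x;\pi_{b_u,b_l})=~&\mathbb{E}_x\!\left[e^{-\delta T_1}(X(T_1)-b_l-\kappa)1_{\{T_1<\tau^-_{b_u}\}}\right]\\
&+\mathbb{E}_x\!\left[e^{-\delta T_1}1_{\{T_1<\tau^-_{b_u}\}}\right]V_\kappa(b_l;\pi_{b_u,b_l})\\
&+\mathbb{E}_x\!\left[e^{-\delta\tau^-_{b_u}}1_{\{\tau^-_{b_u}<T_1\}}\right]V_\kappa(b_u;\pi_{b_u,b_l}).
\end{align*}
It then remains to evaluate the three expectations in closed form. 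The plan is to use the fact that $T_1\sim\text{Exp}(\gamma)$ is independent of $X$, so that conditioning on $T_1$ turns these into Laplace transforms at argument $\gamma+\delta$. Specifically, setting $Y=-(X-x)$ (a spectrally negative L\'evy process started at $0$), I rewrite each expectation as a two-sided exit-type identity for $Y$ and invoke the formulas of \citet*{AlIvZh16} (equivalently the derivations in \citet*{ChYaZh17}) to obtain
\begin{align*}
\mathbb{E}_x\!\left[e^{-\delta\tau^-_{b_u}}1_{\{\tau^-_{b_u}<T_1\}}\right]&=e^{-\phiqr(x-b_u)},\\
\mathbb{E}_x\!\left[e^{-\delta T_1}1_{\{T_1<\tau^-_{b_u}\}}\right]&=\tfrac{\gamma}{\gamma+\delta}\bigl(1-e^{-\phiqr(x-b_u)}\bigr),\\
\mathbb{E}_x\!\left[e^{-\delta T_1}(X(T_1)-b_l-\kappa)1_{\{T_1<\tau^-_{b_u}\}}\right]&=\tfrac{\gamma}{\gamma+\delta}\!\left((x-b_u)+\bigl(d-\kappa+\tfrac{\mu}{\gamma+\delta}\bigr)(1-e^{-\phiqr(x-b_u)})\right).
\end{align*}
Substituting these three identities gives exactly \eqref{Value.upper.non.opt} after recognising the expression in parentheses on the last line as $\Auqr(x-b_u;d)$.

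The main obstacle is the third expectation, which mixes the exponential decision time with the value of $X$ at that time and the event of non-exit below $b_u$. The cleanest route is to differentiate, with respect to the dummy parameter $s$, the identity
$$\mathbb{E}_x\!\left[e^{sX(e_{\gamma+\delta})}1_{\{e_{\gamma+\delta}<\tau^-_{b_u}\}}\right]=\tfrac{\gamma+\delta}{\gamma+\delta-\psi(-s)}\bigl(e^{sx}-e^{-\phiqr(x-b_u)}e^{sb_u}\bigr)$$
at $s=0$, using $-\psi'(0+)=\mu$, and then multiply by $\gamma/(\gamma+\delta)$ to convert $e_{\gamma+\delta}$ into $T_1$ (i.e.\ $\mathbb{E}_x[e^{-\delta T_1}\,f(X(T_1))1_{\{T_1<\tau^-_{b_u}\}}]=\tfrac{\gamma}{\gamma+\delta}\mathbb{E}_x[f(X(e_{\gamma+\delta}))1_{\{e_{\gamma+\delta}<\tau^-_{b_u}\}}]$). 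The other two expectations follow from the same identity at $s=0$ and the standard passage formula $\mathbb{E}[e^{-(\gamma+\delta)\tau^+_{Y,y}}]=e^{-\phiqr y}$ for the dual spectrally negative process. Apart from this one computation, everything is a tidy book-keeping step.
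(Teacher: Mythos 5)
Your proposal is correct and takes essentially the same approach as the paper: decompose via the strong Markov property into the events $\{T_1<\tau^-_{b_u}\}$ and $\{\tau^-_{b_u}<T_1\}$, restart the value function at $b_l$ and $b_u$ respectively, and then evaluate the three resulting expectations in closed form via the independent-exponential/two-sided-exit identities of \citet*{AlIvZh16}. The paper simply cites Lemma 3.4 and Theorem 3.1 of \citet*{ChYaZh17} for those three closed forms, while you spell out the Laplace-transform differentiation at $s=0$ that produces the third one; this is a legitimate (and, as far as I can tell, sign-correct) expansion of the same computation, not a different route.
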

\begin{proof}[Proof of Lemma \ref{lemma.2.2}]
Similar to Lemma 3.4 and Theorem 3.1 in \citet*{ChYaZh17}, we have for $x\geq b_u$,
	\begin{align}
	&\mathbb{E}_x\left[e^{-\delta T_1} (X_{T_1}-b_l-\kappa) 1_{\{T_1<\tau_{b_u}^-\}} \right] 
	= \frac{\gamma}{\gamma+\delta}\left((x-b_u)+(b_u-b_l-\kappa+\frac{\mu}{\gamma+\delta})(1-e^{-\phiqr (x-b_u)}) \right),\label{lemma2.1eq1} \\
	&\mathbb{E}_x\left[e^{-\delta \tau_{b_u}^-} 1_{\{\tau_{b_u}^-<T_1\}} \right] 
	= e^{-\phiqr (x-b_u)}, \label{lemma2.1eq3}\\
	&\mathbb{E}_x\left[e^{-\delta T_1} 1_{\{T_1<\tau_{b_u}^-\}} \right] 
	=\frac{\gamma}{\gamma+\delta}\left( 1-e^{-\phiqr (x-b_u)}\right),\label{lemma2.1eq2} 
	\end{align}
	and \corr{by} the strong Markov property 
	\begin{align}
		V_\kappa(x;\pi_{b_u,b_l}) =~&  \mathbb{E}_x\Big[e^{-\delta T_1} (X_{T_1}-b_l-\kappa+V_\kappa(b_l;\pi_{b_u,b_l})) 1_{\{T_1<\tau_{b_u}^-\}} \Big] +\mathbb{E}_x\Big[e^{-\delta \tau_{b_u}^-} 1_{\{\tau_{b_u}^-<T_1\}} \Big] V_\kappa(b_u;\pi_{b_u,b_l})\\
		=~&  \mathbb{E}_x\Big[e^{-\delta T_1} (X_{T_1}-b_l-\kappa) 1_{\{T_1<\tau_{b_u}^-\}} \Big] +\mathbb{E}_x\Big[e^{-\delta \tau_{b_u}^-} 1_{\{\tau_{b_u}^-<T_1\}} \Big] V_\kappa(b_u;\pi_{b_u,b_l})\nonumber\\
		&+ \mathbb{E}_x\Big[e^{-\delta T_1} 1_{\{T_1<\tau_{b_u}^-\}} \Big]  V_\kappa(b_l;\pi_{b_u,b_l}),\label{lemma2.eq.4}
	\end{align}
	where substituting the expected values in (\ref{lemma2.eq.4}) using (\ref{lemma2.1eq1})-(\ref{lemma2.1eq2}) gives (\ref{Value.upper.non.opt}).
\end{proof}

Next, we consider the case when $x<b_u$.
\begin{lemma}\label{lemma2.3}
	For $x<b_u$, we have 
	\begin{align}
	&\mathbb{E}_x\left[ e^{-\delta T_{b_u}^+} (X_{T_{b_u}^+}-b_u) 1_{\{T_{b_u}^+<\tau_0^-\}} \right] = \Bqr(b_u-x;b_u),\\
	&\mathbb{E}_x\left[ e^{-\delta T_{b_u}^+} 1_{\{T_{b_u}^+<\tau_0^-\}} \right] = \Lqr(b_u-x;b_u),
	\end{align}
	where
	\begin{align}
	&\Kqr(x) = -\frac{\gamma}{\gamma+\delta}\frac{\mu}{\delta}\Zq(x) - {\overline{Z}}_\delta(x)+\frac{\mu}{\delta},\\
	&\Bqr(x;b_u) = \frac{\gamma}{\gamma+\delta}\left[ \Kqr(x)-\frac{\Zqr(x)}{\Zqr(b_u)}\Kqr(b_u)\right],\\
	&\Lqr(x;b_u) =  \frac{\gamma}{\gamma+\delta}\left[ \Zq(x)-\frac{\Zqr(x)}{\Zqr(b_u)}\Zq(b_u)\right].
	\end{align}
\end{lemma}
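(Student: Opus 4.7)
My plan is to reduce both expectations to standard Poisson-observed first-passage functionals of a spectrally negative L\'evy process, and then assemble them using the scale-function building blocks $\Wq,\Zq,\Zb,\Zqr$. Setting $\hat Y(t):=b_u-X(t)=(b_u-x)+Y(t)$, the process $\hat Y$ has the law of $Y$ started at $y:=b_u-x\in(0,b_u]$. Under this map
\[
\{X(T_i)>b_u\}=\{\hat Y(T_i)<0\}, \qquad \{\tau_0^-<T_{b_u}^+\}=\{\tau_{b_u}^+(\hat Y)<G\}, \qquad X(T_{b_u}^+)-b_u=-\hat Y(G),
\]
with $G:=\min\{T_i:\hat Y(T_i)<0\}$ and $\tau_{b_u}^+(\hat Y):=\inf\{t:\hat Y(t)>b_u\}$, so the two claimed identities amount to
\[
\mathbb{E}[-\hat Y(G)e^{-\delta G};\,G<\tau_{b_u}^+(\hat Y)]=\Bqr(y;b_u), \qquad \mathbb{E}[e^{-\delta G};\,G<\tau_{b_u}^+(\hat Y)]=\Lqr(y;b_u).
\]

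\noindent I would then decompose at the continuous downcrossing time $\sigma:=\inf\{t:\hat Y(t)<0\}$. On $\{\tau_{b_u}^+(\hat Y)<\sigma\}$ both expectations vanish, so attention focuses on the complement. Applying the strong Markov property at $\sigma$ together with the memoryless property of the Poisson clock produces a geometric decomposition over alternating cycles: \emph{$\hat Y$ below $0$ until either the Poisson clock rings (triggering $G$) or $\hat Y$ re-crosses $0$ upward}, followed by \emph{$\hat Y$ above $0$ until either it exceeds $b_u$ (killing the event) or it downcrosses $0$ again (restarting the cycle)}. The cycle factors are classical: the weighting by "not first exiting above $b_u$" is exactly identity \eqref{ineq.Zqr.W} from \citet*{AlIvZh16}; the Laplace transform of $\sigma$ and the expected discounted undershoot $\mathbb{E}[-\hat Y(\sigma)e^{-\delta\sigma};\,\sigma<\infty]$ are the classical spectrally negative Gerber--Shiu quantities expressible through $\Zq$, $\Zb$, and the constant $\mu/\delta=-\psi'(0+)/\delta$. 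Summing the geometric series collapses the weighting into the ratio $\Zqr(y)/\Zqr(b_u)$, and both expectations then take the form $\tfrac{\gamma}{\gamma+\delta}[F(y)-\tfrac{\Zqr(y)}{\Zqr(b_u)}F(b_u)]$, with $F=\Zq$ yielding $\Lqr$ and $F=\Kqr$ yielding $\Bqr$.

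\noindent As a cross-check (and an alternative route paralleling \citet*[Lemma~3.4 and Theorem~3.1]{ChYaZh17}, which the authors cite), one can verify that the right-hand sides of both identities solve the appropriate integro-differential system on $(0,b_u)$ with killing rate $\delta+\gamma$, with boundary values at $y=b_u$ both equal to $0$ (as forced by certain immediate ruin from $X(0)=0$) and matching the one-sided Parisian exit quantity at $y\downarrow 0^+$. Uniqueness of solutions to this boundary-value problem then forces equality, bypassing the explicit cycle bookkeeping.

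\noindent The main obstacle is the overshoot quantity $\Bqr$. The survival identity for $\Lqr$ follows from a single application of \eqref{ineq.Zqr.W} and a short geometric summation, but $\Bqr$ requires careful bookkeeping of the discounted undershoot $-\hat Y(\sigma)$ at each downcrossing of $0$. Tracking how the constant $\mu/\delta$ surfaces---through the compensator of $\hat Y$ restricted to its excursions below $0$---and how it survives into the compact combination $\Kqr(y)=-\tfrac{\gamma}{\gamma+\delta}\tfrac{\mu}{\delta}\Zq(y)-\Zb(y)+\tfrac{\mu}{\delta}$ after the geometric sum collapses is the technically most delicate step.
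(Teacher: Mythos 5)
The paper does not prove Lemma~\ref{lemma2.3} at all: it simply cites Lemma~3.5 of \citet*{ChYaZh17}. So by definition you take a different route---you attempt an actual derivation. Two remarks on your plan.

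First, the more efficient route (and, judging from commented-out code in the source, the one the authors had in mind) is not the excursion-cycle decomposition but a Laplace-transform argument built on identity (15) of \citet*{AlIvZh16}: with $Y(t)=b_u-X(t)$ and $z=b_u-x$ one has
\[
\mathbb{E}_z\!\left[ e^{-\delta T^-_{Y,0}+s\,Y(T^-_{Y,0})}\,1_{\{T^-_{Y,0}<\tau_{Y,b_u}^+\}}\right]
=\frac{\gamma}{\gamma-(\psi(s)-\delta)}\left( Z_\delta(z,s)-\frac{Z_\delta(z,\phiqr)}{Z_\delta(b_u,\phiqr)}Z_\delta(b_u,s)\right),
\]
from which $\Lqr$ is obtained by letting $s\downarrow 0$, and $\Bqr$ is obtained by differentiating in $s$ and then letting $s\downarrow 0$. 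That single stroke absorbs all the geometric/cycle bookkeeping you are worried about into the tilted scale function $Z_\delta(\cdot,s)$; in particular the constant $\mu/\delta$ emerges simply from $\partial_s\psi(s)\big|_{0+}=-\mu$ rather than from tracking compensators along excursions. Your geometric-cycle decomposition will produce the same series, but you would be re-deriving, term by term, exactly what the tilted identity already packages.

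Second, as written your argument is a plan rather than a proof. The change of variables and the identification of the events are correct, and the $\Lqr$ part is indeed straightforward from \eqref{ineq.Zqr.W} plus a geometric sum; but for $\Bqr$ you explicitly defer the step you call ``technically most delicate'' (tracking the discounted undershoot and how $\mu/\delta$ survives into $\Kqr$), which is precisely where the content of the lemma lies. If you want to avoid citing a reference, the honest completion is either to carry out that bookkeeping in full, or to switch to the Laplace-transform route above, or to carry out the IDE-plus-uniqueness verification you mention as a cross-check (that verification, with the boundary values you state, does constitute a self-contained proof once uniqueness for the boundary-value problem is established). As it stands, the proposal identifies the right reduction but leaves the hard half of the lemma open.
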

\begin{proof}[Proof of Lemma \ref{lemma2.3}]
	See Lemma 3.5 in \citet*{ChYaZh17}.
\end{proof}

For the functions $\Bqr(\cdot;b_u)$ and $L_{\gamma,\delta}(\cdot;b_u)$ defined, we present the following identities, which will be used in the next proof:
\begin{align}
&\Bqr(x;b_u) = -\frac{\gamma}{\gamma+\delta}\frac{\mu}{\delta}\Jqr(x)-\Hqr(x)+ \Zqr(x) \Bqr(0;b_u),\label{identityL1}\\
&\Lqr(x;b_u) = \frac{\gamma}{\gamma+\delta}\Zq(x) - \Zqr(x)\left( \frac{\gamma}{\gamma+\delta} - \Lqr(0;b_u)\right).\label{identityL2}
\end{align}

The value function for $x<b_u$ is given by the following lemma.
\begin{lemma}\label{lemma.2.4}
	For $x<b_u$, we have
	\begin{equation}\label{value.eq.low.general}
	V_\kappa(x;\pi_{b_u,b_l}) = \frac{\gamma}{\gamma+\delta}\Aqr(b_u-x;d)+\Zqr(b_u-x) V_\kappa(b_u;\pi_{b_u,b_l}) + \frac{\gamma}{\gamma+\delta} (\Zq(b_u-x)-\Zqr(b_u-x)) V_\kappa(b_l;\pi_{b_u,b_l}).
	\end{equation}
\end{lemma}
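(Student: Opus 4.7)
My plan is to use the strong Markov property at the first dividend decision time at which the surplus crosses $b_u$ from below (or at ruin, whichever comes first). For $x<b_u$, the controlled process $X^{\pi_{b_u,b_l}}$ coincides with the uncontrolled process $X$ up to either the ruin time $\tau_0^-$ or the first Poissonian time $T_{b_u}^+$ at which $X$ is above $b_u$. On $\{T_{b_u}^+<\tau_0^-\}$, the strategy pays a dividend of $X(T_{b_u}^+)-b_l$ (so a net dividend of $X(T_{b_u}^+)-b_l-\kappa$) and the modified surplus restarts from $b_l$; on the complementary event no dividend is ever collected. Hence
\begin{equation*}
V_\kappa(x;\pi_{b_u,b_l})=\mathbb{E}_x\!\left[e^{-\delta T_{b_u}^+}\bigl(X(T_{b_u}^+)-b_l-\kappa+V_\kappa(b_l;\pi_{b_u,b_l})\bigr)1_{\{T_{b_u}^+<\tau_0^-\}}\right].
\end{equation*}

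Splitting $X(T_{b_u}^+)-b_l-\kappa=\bigl(X(T_{b_u}^+)-b_u\bigr)+\bigl(d-\kappa\bigr)$ and applying Lemma \ref{lemma2.3} immediately gives
\begin{equation*}
V_\kappa(x;\pi_{b_u,b_l})=\Bqr(b_u-x;b_u)+\bigl(d-\kappa+V_\kappa(b_l;\pi_{b_u,b_l})\bigr)\Lqr(b_u-x;b_u).
\end{equation*}

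The next step is purely algebraic: I substitute the identities \eqref{identityL1} and \eqref{identityL2}, and use $\frac{\gamma}{\gamma+\delta}\Zq(y)-\frac{\gamma}{\gamma+\delta}\Zqr(y)=\Cqr(y)$ to regroup. The terms $-\tfrac{\gamma}{\gamma+\delta}\tfrac{\mu}{\delta}\Jqr(b_u-x)-\Hqr(b_u-x)+(d-\kappa)\Cqr(b_u-x)$ collapse to $\tfrac{\gamma}{\gamma+\delta}\Aqr(b_u-x;d)$ by definition \eqref{A.expression}; the $V_\kappa(b_l;\pi_{b_u,b_l})$ terms combine into $\Cqr(b_u-x)V_\kappa(b_l;\pi_{b_u,b_l})$ plus a residual multiple of $\Zqr(b_u-x)$. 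Collecting everything that multiplies $\Zqr(b_u-x)$ yields a constant $K:=\Bqr(0;b_u)+(d-\kappa+V_\kappa(b_l;\pi_{b_u,b_l}))\Lqr(0;b_u)$, independent of $x$.

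Finally I identify $K=V_\kappa(b_u;\pi_{b_u,b_l})$ by self-consistency: evaluating the established expression at $x=b_u$ and using $\Zqr(0)=1$, $\Cqr(0)=0$ and $\Aqr(0;d)=0$ (which follows from $\Jqr(0)=1$ and $\Hqr(0)=-\tfrac{\gamma}{\gamma+\delta}\tfrac{\mu}{\delta}$) forces $V_\kappa(b_u;\pi_{b_u,b_l})=K$. Substituting back produces exactly \eqref{value.eq.low.general}. The only mildly delicate point is the algebraic bookkeeping of the $\Zqr$ coefficient; everything else is just the Markov decomposition together with Lemma \ref{lemma2.3}.
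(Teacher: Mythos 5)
Your proposal is correct and follows essentially the same route as the paper: the same strong Markov decomposition at $T_{b_u}^+\wedge\tau_0^-$ to reach $V_\kappa(x;\pi_{b_u,b_l})=\Bqr(b_u-x;b_u)+(d-\kappa+V_\kappa(b_l;\pi_{b_u,b_l}))\Lqr(b_u-x;b_u)$, followed by substituting \eqref{identityL1}--\eqref{identityL2} and pinning down the coefficient of $\Zqr(b_u-x)$ via the value at $x=b_u$. The only cosmetic difference is ordering: the paper derives the relation at $x=b_u$ (its equation \eqref{eqtL0}) before substituting the identities, whereas you substitute first and then identify the constant $K$; the algebra is identical.
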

\begin{proof}[Proof of Lemma \ref{lemma.2.4}]
	We first note by strong Markov property and Lemma \ref{lemma2.3} that
	\begin{equation}\label{valuefcnloweq1}
	V_\kappa(x;\pi_{b_u,b_l}) = \Bqr(b_u-x;b_u)+\left(b_u-b_l-\kappa+V_\kappa(b_l;\pi_{b_u,b_l})\right)\Lqr(b_u-x;b_u).
	\end{equation}
	\corr{Letting $x$ go to $b_u$}, we have 
	\begin{equation*}
	V_\kappa(b_u;\pi_{b_u,b_l}) = \Bqr(0;b_u)+\left(b_u-b_l-\kappa+V_\kappa(b_l;\pi_{b_u,b_l})\right)\Lqr(0;b_u),
	\end{equation*}
	or
	\begin{equation}\label{eqtL0}
	\Bqr(0;b_u) = V_\kappa(b_u;\pi_{b_u,b_l}) - \Lqr(0;b_u)\left( b_u-b_l-\kappa+V_\kappa(b_l;\pi_{b_u,b_l})\right) .
	\end{equation}
	Inserting (\ref{identityL1}) and (\ref{identityL2}) into (\ref{valuefcnloweq1}) and further simplifying using (\ref{eqtL0}), we obtain the result.
\end{proof}

It should be clear that the value function $V_\kappa(\cdot;\pi_{b_u,b_l})$ can be expressed in terms of $V_\kappa(b_u;\pi_{b_u,b_l})$ and $V_\kappa(b_l;\pi_{b_u,b_l})$ for all $x\geq 0$. We still need to find the 2 constants. We divide it into two cases depending on whether $b_l=0$ \corr{or not}.

When $b_l=0$, a liquidation at first opportunity strategy,  in view of (\ref{value.eq.low.general}), using $V_\kappa(b_l;\pi_{b_u,b_l})=V_\kappa(0;\pi_{b_u,b_l})=0$, we have
\begin{equation*}
V_\kappa(x;\pi_{b_u,b_l})=\frac{\gamma}{\gamma+\delta}\Aqr(b_u-x;d)+\Zqr(b_u-x)V_\kappa(b_u;\pi_{b_u,b_l}).
\end{equation*}
Further substituting $x=0$ and using $V_\kappa(0;\pi_{b_u,b_l})=0$, we obtain
\begin{equation*}
0=V_\kappa(0;\pi_{b_u,b_l})=\frac{\gamma}{\gamma+\delta}\Aqr(b_u;d)+V_\kappa(b_u;\pi_{b_u,b_l})\corr{\Zqr(b_u)},
\end{equation*}
or equivalently
\begin{equation}
V_\kappa(b_u;\pi_{b_u,b_l})=\frac{\gamma}{\gamma+\delta}\frac{-\Aqr(b_u;d)}{\Zqr(b_u)}.
\end{equation}

When $b_l>0$, substituting $x=b_l$ and $x=0$ in (\ref{value.eq.low.general}) and noticing $V_\kappa(0;\pi_{b_u,b_l})=0$, we have
\begin{align}
\Zqr(b_u)V_\kappa(b_u;\pi_{b_u,b_l})+\frac{\gamma}{\gamma+\delta}(\Zq(b_u)-\Zqr(b_u))V_\kappa(b_l;\pi_{b_u,b_l})&=-\frac{\gamma}{\gamma+\delta}\Aqr(b_u;d)\label{solve.vbu.vbl.eq1}\\
\Zqr(d)V_\kappa(b_u;\pi_{b_u,b_l})+\left( \frac{\gamma}{\gamma+\delta}\left(\Zq(d)-\Zqr(d)\right)-1\right) V_\kappa(b_l;\pi_{b_u,b_l})&=-\frac{\gamma}{\gamma+\delta}\Aqr(d;d).\label{solve.vbu.vbl.eq2}
\end{align}
To solve $V_\kappa(b_u;\pi_{b_u,b_l})$ and $V_\kappa(b_l;\pi_{b_u,b_l})$, we need to make sure that the determinant of \\$ \left( \begin{array}{ccc}
\Zqr(b_u) & \frac{\gamma}{\gamma+\delta}(\Zq(b_u)-\Zqr(b_u))  \\
\Zqr(d) & \frac{\gamma}{\gamma+\delta}(\Zq(b_u)-\Zqr(b_u))-1  \end{array} \right)$
is non-zero. This property can be checked by noticing that it is always negative, i.e.
\begin{align*}
&\Zqr(b_u)\left( \frac{\gamma}{\gamma+\delta}(\Zq(d)-\Zqr(d))-1\right)-\Zqr(d)\frac{\gamma}{\gamma+\delta}(\Zq(b_u)-\Zqr(b_u))<0\\
\iff&\frac{\gamma}{\gamma+\delta}\left( \Zq(d)\corr{\Zqr(b_u)}-\Zqr(d)\Zq(b_u)\right) <\Zqr(b_u)\\
\iff&\frac{\gamma}{\gamma+\delta}\left( \Zq(d)-\frac{\Zqr(d)}{\Zqr(b_u)}\Zq(b_u)\right) <1,
\end{align*}
which is always true since the expression on the left hand side is equal to $	\mathbb{E}_d\left[ e^{-\delta T^-_{Y,0}}1_{\{T^-_{Y,0}<\tau_{Y,b_u}^+\}}\right]<1$ by (15) in \cite*{AlIvZh16}. Hence, we are able to solve $V_\kappa(b_u;\pi_{b_u,b_l})$ and $V_\kappa(b_l;\pi_{b_u,b_l})$ using (\ref{solve.vbu.vbl.eq1}) and (\ref{solve.vbu.vbl.eq2}).
The values of the constants, $V_\kappa(b_u;\pi_{b_u,b_l})$ and $V_\kappa(b_l;\pi_{b_u,b_l})$, are given by 
\begin{align}
V_\kappa(b_l;\pi_{b_u,b_l})=~&\frac{(d-\kappa-\frac{\gamma}{\gamma+\delta}\frac{\mu}{\delta})\corr{\Zqr(b_u)}\Lqr(d;b_u)+\Hqr(b_u)\Zqr(d)-\Hqr(d)\Zqr(b_u)}{\Zqr(b_u)(1-\Lqr(d;b_u))}\\
V_\kappa(b_u;\pi_{b_u,b_l})=~&\frac{\frac{-\gamma}{\gamma+\delta}\Aqr(b_u;d)-\frac{\gamma}{\gamma+\delta}\frac{\mu}{\delta}\corr{\Zqr(b_u)}\Lqr(d;b_u)+\Cqr(b_u)\Hqr(d)-\Cqr(d)\Hqr(b_u)}{\Zqr(b_u)(1-\Lqr(d;b_u))}.
\end{align}

\section{Proof of Lemma \ref{lemma.smoothness.equivalent} and Proposition \ref{lemma.existence.bu}}\label{Appendix.C}

\subsection{Proof of Lemma \ref{lemma.smoothness.equivalent}}\label{App.C1}

We first investigate the case when $b_l=0$. When $b_l=0$, recall from Remark \ref{R_proofsmooth} that \eqref{eq.to.be.solve.bl.0} holds, which is equivalent to
\begin{equation*}
(b_u-\kappa)\Zqr(b_u)+\frac{\gamma}{\gamma+\delta}\Aqr(b_u;d)=0.
\end{equation*}
Inserting the expressions of $\Aqr(\cdot;d)$ from (\ref{A.expression}), we have
\begin{align*}
0=~&\Zqr(b_u)(b_u-\kappa)+\frac{\gamma}{\gamma+\delta}\left( \frac{-\mu}{\delta}\Jqr(b_u)+\frac{\mu}{\delta}-{\overline{Z}}_\delta(b_u)+(b_u-\kappa)(\Zq(b_u)-\Zqr(b_u))\right) \\
=~&(b_u-\kappa)\Jqr(b_u)+\frac{\gamma}{\gamma+\delta}\left( \frac{\mu}{\delta}-\frac{\mu}{\delta}\Jqr(b_u)-{\overline{Z}}\de(b_u)\right),
\end{align*}
which is the same as $\Gamma_{b_l}$ defined in (\ref{bubleqt}) with $b_l=0$.

When $b_l>0$, equating  $V_{\kappa}(b_u;\pi_{b_u,b_l})-V_{\kappa}(b_l;\pi_{b_u,b_l})=b_u-b_l-\kappa$ with the help of (\ref{value.bl}) and (\ref{value.bu}), after some algebra, one can show that (\ref{smoothness.condition}) is equivalent to (\ref{bubleqt}), i.e.
\begin{equation*}
\Gamma_{b_l}(d)=\left(d-\kappa-\frac{\gamma}{\gamma+\delta}\frac{\mu}{\delta}\right) \Jqr(b_u)-\Hqr(b_u)+\Jqr(d)\Hqr(b_u)-\Jqr(b_u)\Hqr(d)=0,
\end{equation*}
where $b_u=b_l+d$.

\subsection{Proof of Proposition \ref{lemma.existence.bu}}\label{App.C2}
	The goal is to show that there is a unique root for $\Gamma_{b_l}(d)=0$, $b_l\in[0,b^*]$. 
	
	We first show the existence of a root.
	\corr{This is achieved by showing (1) $\frac{\partial}{\partial d}\Gamma_{b_l}(d)$ goes to infinity when $d$ goes to infinity and (2) $\Gamma_{b_l}(\kappa)<0$ so that a root for $\Gamma_{b_l}(d)=0$ exists by continuity.}
	
	To compute the derivative, we will use the following identity: 
	\begin{equation}
	\Hqr(b_u)\Zqr(d)-\Hqr(d)\Zqr(b_u)=\frac{\gamma}{\gamma+\delta}\frac{\mu}{\delta}\Lqr(d;b_u)\Zqr(b_u)+\Bqr(d;b_u)\Zqr(b_u).
	\end{equation}
	This can be shown by the following:
	\begin{align*}
	\Bqr(d;b_u)=~&\frac{\gamma}{\gamma+\delta}\left(K(d)-\frac{\Zqr(d)}{\Zqr(b_u)}K(b_u)\right)\\
	=~&\left(-\frac{\gamma}{\gamma+\delta}\frac{\mu}{\delta}\frac{\gamma}{\gamma+\delta}\Zq(d)-\Hqr(d)\right)-\frac{\Zqr(d)}{\Zqr(b_u)}\left(-\frac{\gamma}{\gamma+\delta}\frac{\mu}{\delta}\frac{\gamma}{\gamma+\delta}\Zq(b_u)-\Hqr(b_u)\right)\\
	=~&-\frac{\gamma}{\gamma+\delta}\frac{\mu}{\delta}\frac{\gamma}{\gamma+\delta}\left(\Zq(d)-\frac{\Zqr(d)}{\Zqr(b_u)}\Zq(b_u)\right)+\frac{1}{\Zqr(b_u)}\left(\Hqr(b_u)\Zqr(d)-\Hqr(d)\Zqr(b_u)\right)\\
	=~&-\frac{\gamma}{\gamma+\delta}\frac{\mu}{\delta}\Lqr(d;b_u)+\frac{1}{\Zqr(b_u)}\left(\Hqr(b_u)\Zqr(d)-\Hqr(d)\Zqr(b_u)\right).
	\end{align*}
	Hence, we have
	\begin{align}
	&\Jqr\p(d)\Hqr(b_u)-\Jqr\p(b_u)\Hqr(d)\nonumber\\
	=~&\frac{\delta}{\gamma+\delta}\phiqr\left(\Hqr(b_u)\Zqr(d)-\Hqr(d)\Zqr(b_u)\right)\nonumber\\
	=~&\frac{\delta}{\gamma+\delta} \Zqr(b_u)\phiqr\left(\frac{\gamma}{\gamma+\delta}\frac{\mu}{\delta}\Lqr(d;b_u)+\Bqr(d;b_u)\right).
	\end{align}
	Furthermore, by direct computation, we have
	\begin{equation}
	\Jqr(d)\Hqr\p(b_u)-\Jqr(b_u)\Hqr\p(d) = -\frac{\delta}{\gamma+\delta}\Lqr(d;b_u)\corr{\Zqr(b_u)}.
	\end{equation}
	Therefore, we have
	\begin{align}
	&\Gamma_{b_l}'(d)\nonumber\\
	=~&(d-\kappa-\frac{\gamma}{\gamma+\delta}\frac{\mu}{\delta})\Jqr\p(b_u)+\Jqr(b_u)-\frac{\gamma}{\gamma+\delta}\Zq(b_u)+(\Jqr\p(d)\Hqr(b_u)-\Jqr\p(b_u)\Hqr(d))\nonumber\\
	&+(\Jqr(d)\Hqr\p(b_u)-\Jqr(b_u)\Hqr\p(d))\nonumber\\
	=~&\frac{\delta}{\gamma+\delta}\Zqr(b_u)\left(\phiqr(d-\kappa-\frac{\gamma}{\gamma+\delta}\frac{\mu}{\delta})+1-\Lqr(d;b_u)+\phiqr(\frac{\gamma}{\gamma+\delta}\frac{\mu}{\delta}\Lqr(d;b_u)+\Bqr(d;b_u))\right)\nonumber\\
	=~&\frac{\delta}{\gamma+\delta}\Zqr(b_u)\left(\left(1-\Lqr(d;b_u)\right)+\phiqr\left(\Bqr(d;b_u)+d-\kappa-\frac{\gamma}{\gamma+\delta}\frac{\mu}{\delta}(1-\Lqr(d;b_u))\right)\right)\nonumber\\
	=~&\frac{\delta}{\gamma+\delta}\phiqr \Zqr(b_u)\left(1-\Lqr(d;b_u)\right)\left(\frac{\Bqr(d;b_u)}{1-\Lqr(d;b_u)}+\frac{d-\kappa}{1-\Lqr(d;b_u)}-\left(\frac{\gamma}{\gamma+\delta}\frac{\mu}{\delta}-\frac{1}{\phiqr}\right)\right).\label{Gamma.dev.bl}
	\end{align}
	Using (\ref{valuefcnloweq1}), putting $x=b_l$, we have
	\begin{align}
	&V_\kappa(b_l;\pi_{b_u,b_l})=(d-\kappa+V_\kappa(b_l;\pi_{b_u,b_l}))\Lqr(d;b_u)+\Bqr(d;b_u)\nonumber\\
	\iff&(d-\kappa)\corr{\Lqr(d;b_u)}+\Bqr(d;b_u)=V_\kappa(b_l;\pi_{b_u,b_l})(1-\Lqr(d;b_u))\nonumber\\
	\iff&\frac{\Bqr(d;b_u)}{1-\Lqr(d;b_u)}=V_\kappa(b_l;\pi_{b_u,b_l})-\frac{(d-\kappa)\Lqr(d;b_u)}{1-\Lqr(d;b_u)}\label{identity.vbl}.
	\end{align}
	Substituting (\ref{identity.vbl}) into (\ref{Gamma.dev.bl}), we get 
	\begin{align}
	&\Gamma_{b_l}'(d)\nonumber\\
	=~&\frac{\delta}{\gamma+\delta}\phiqr \Zqr(b_u)\left(1-\Lqr(d;b_u)\right)\left(\frac{\Bqr(d;b_u)}{1-\Lqr(d;b_u)}+\frac{d-\kappa}{1-\Lqr(d;b_u)}-\left(\frac{\gamma}{\gamma+\delta}\frac{\mu}{\delta}-\frac{1}{\phiqr}\right)\right)\nonumber\\
	=~&\frac{\delta}{\gamma+\delta}\phiqr \Zqr(b_u)\left(1-\Lqr(d;b_u)\right)\left(V_\kappa(b_l;\pi_{b_u,b_l})-\frac{(d-\kappa)\Lqr(d;b_u)}{1-\Lqr(d;b_u)}+\frac{d-\kappa}{1-\Lqr(d;b_u)}-\left(\frac{\gamma}{\gamma+\delta}\frac{\mu}{\delta}-\frac{1}{\phiqr}\right)\right)\nonumber\\
	=~&\frac{\delta}{\gamma+\delta}\phiqr \Zqr(b_u)\left(1-\Lqr(d;b_u)\right)\left(V_{\kappa}(b_l;\pi_{b_u,b_l})+d-\kappa-\left(\frac{\gamma}{\gamma+\delta}\frac{\mu}{\delta}-\frac{1}{\phiqr}\right)\right).\label{GammaBlD}
	\end{align}
	Hence, we have
	\begin{equation}
	\lim_{d\rightarrow\infty}\Gamma_{b_l}'(d)=+\infty
	\end{equation}
	since from \eqref{Zqr.2} $\Zqr(x)=\gamma\int_0^\infty e^{-\phiqr y}\Wq(x+y)dy$  is increasing in $x$, $\Lqr(d;b_u)$ is bounded above by $\Lqr(\kappa;b_u)$, $\lim_{d\rightarrow\infty}\Lqr(d;b_u)=0$ and $V_{\kappa}(b_l;\pi_{b_u,b_l})$ is bounded below by $0$. \corr{This implies $$\lim_{d\rightarrow\infty}\Gamma_{b_l}(d)=\infty.$$}
	
	Next, we show that $\Gamma_{b_l}(\kappa)<0$. \corrs{Suppose $b^*=0$ and therefore we have $b_l=0$. In view of the definition of $\Gamma_{b_l}$ in \eqref{bubleqt}, we have
	\begin{equation}
	\Gamma_0(\kappa)=-\frac{\gamma}{\gamma+\delta}\frac{\mu}{\delta}\Jqr(\kappa)-\Hqr(\kappa)=-\frac{\gamma}{\gamma+\delta}\Big(\overline{Z}_{\delta}(\kappa)-\frac{\mu}{\delta}+\frac{\mu}{\delta}\Jqr(\kappa)\Big).
	\end{equation}
	Hence, it suffices to show that the term inside the last bracket above is strictly positive for $\kappa>0$. To do so, we use the following inequality for $0\leq x\leq b$,
	\begin{equation*}
	\overline{V}_b(x):=\frac{\Zq(b-x)}{\Zq(b)}\Big(\overline{Z}_\delta (b)-\frac{\mu}{\delta}\Big)-\Big(\overline{Z}_\delta (b-x)-\frac{\mu}{\delta}\Big)\geq 0,
	\end{equation*}
	which is justified by the fact that the function $\overline{V}_b$ defined above is the value function of a (continuous) barrier strategy with barrier level $b\geq 0$ in \citet*{BaKyYa12}, which is by definition non-negative. In particular, inserting $x=b$ in the above yields
	\begin{equation}\label{ineq.vb}
	0\leq \overline{V}_b(b)=\frac{1}{\Zq(b)}\Big(\overline{Z}_\delta (b)-\frac{\mu}{\delta}+\frac{\mu}{\delta}\Zq(b)\Big).
	\end{equation}
	To this end, we notice that equation \eqref{eqt.ZqrZq.increasing} readily yields $\Zqr(x)/\Zq(x)>\Zqr(0)/\Zq(0)=1$, which further implies $\Jqr(x)\geq \Zq(x)$, for $x> 0$. Thus, we have from \eqref{ineq.vb} that
	$$\Gamma_0(\kappa)=-\frac{\gamma}{\gamma+\delta}\Big(\overline{Z}_{\delta}(\kappa)-\frac{\mu}{\delta}+\frac{\mu}{\delta}\Jqr(\kappa)\Big)<-\frac{\gamma}{\gamma+\delta}\Big(\overline{Z}_{\delta}(\kappa)-\frac{\mu}{\delta}+\frac{\mu}{\delta}\Zq(\kappa)\Big)\leq 0.$$
	On the other hand, if $b^*>0$, in} view of the definition of $\Gamma_{b_l}$ in \eqref{bubleqt}, when $d=\kappa$, we have
	\begin{equation}
	\widetilde{\Gamma}_{b_l}(\kappa):=\Gamma_{b_l}(\kappa)=-\frac{\gamma}{\gamma+\delta}\frac{\mu}{\delta} \Jqr(b_l+\kappa)-\Hqr(b_l+\kappa)+\Jqr(\kappa)\Hqr(b_l+\kappa)-\Jqr(b_l+\kappa)\Hqr(\kappa),
	\end{equation}
	which is essentially the same as $\Gamma_{b_l}$ if we replace $d-\kappa$ and $\kappa$ by $0$ and $d$ respectively. In this sense, if we differentiate the above w.r.t. $\kappa$, we will obtain the formula \eqref{GammaBlD} except we do not have the term $\Jqr(b_u)$, i.e.
	$$\frac{\partial}{\partial \kappa}\widetilde\Gamma_{b_l}(\kappa)=\frac{\delta}{\gamma+\delta}\phiqr \Zqr(b_l+\kappa)\left(1-\Lqr(\kappa;b_l+\kappa)\right)\left(V_{\kappa}(b_l;\pi_{b_l+\kappa,b_l})-\left(\frac{\gamma}{\gamma+\delta}\frac{\mu}{\delta}-\frac{1}{\phiqr}\right)\right)-\Jqr(b_l+\kappa).$$
	Now, by noting that the term inside the last bracket in the first term is 
	$$V_{\kappa}(b_l;\pi_{b_l+\kappa,b_l})-\left(\frac{\gamma}{\gamma+\delta}\frac{\mu}{\delta}-\frac{1}{\phiqr}\right)\leq V_{\kappa}(b_l;\pi_{b_l+\kappa,b_l})-v_0(b^*)\leq 0$$
	for $b_l\leq b^*$ due to \eqref{V0bstar}. Therefore, we can conclude that 
	$$\Gamma_{b_l}(\kappa)=\tilde\Gamma_{b_l}(\kappa)<\tilde\Gamma_{b_l}(0)=0.$$

	\corr{Combining with $\lim_{d\rightarrow\infty}\Gamma_{b_l}(d)=\infty$, we can conclude that} there is a root for $\Gamma_{b_l}(d)=0$ \corr{provided that $0\leq b_l\leq b^*$}.
	
	Finally, we show the uniqueness of the root. Suppose there is a root $d'$ satisfying $\Gamma_{b_l}(d')=0$, then we have $V_{\kappa}(b_l;\pi_{b_u',b_l})+d'-\kappa=V_{\kappa}(b_u;\pi_{b_u',b_l})$ by the definition of $\Gamma_{b_l}$, where  $b_u'=b_l+d'$. From (\ref{Vbu.smooth}), we have
	\begin{equation*}
	V_{\kappa}(b_u';\pi_{b_u',b_l})=\left(\frac{\Hqr(b_u')}{\Jqr(b_u')}+\frac{1}{\phiqr}\right)+\left(\frac{\gamma}{\gamma+\delta}\frac{\mu}{\delta}-\frac{1}{\phiqr}\right)
	\end{equation*}
	and subsequently 
	\begin{align}
	&\Gamma_{b_l}'(d')\nonumber\\
	=~&\frac{\delta}{\gamma+\delta}\phiqr \Zqr(b_u')\left(1-\Lqr(d';b_u')\right)\left(V_{\kappa}(b_l;\pi_{b_u',b_l})+d'-\kappa-\left(\frac{\gamma}{\gamma+\delta}\frac{\mu}{\delta}-\frac{1}{\phiqr}\right)\right)\nonumber\\
	=~&\frac{\delta}{\gamma+\delta}\phiqr \Zqr(b_u')\left(1-\Lqr(d';b_u')\right)\left(V_{\kappa}(b_u';\pi_{b_u',b_l})-\left(\frac{\gamma}{\gamma+\delta}\frac{\mu}{\delta}-\frac{1}{\phiqr}\right)\right)\nonumber\\
	=~&\frac{\delta}{\gamma+\delta}\phiqr \Zqr(b_u')\left(1-\Lqr(d';b_u')\right)\left(\frac{\Hqr(b_u')}{\Jqr(b_u')}+\frac{1}{\phiqr}\right).
	\end{align}
	
	Now, if there is a $d_1$ such that $\Gamma_{b_l}(d_1)=0$ and $\Gamma_{b_l}'(d_1)<0$, there must exist another root $d_2$ such that $d_2<d_1$ and $\Gamma_{b_l}'(d_2)>0$ since $\Gamma_{b_l}'(d_1-)<0$, $\Gamma_{b_l}(\kappa)<0$ and $\Gamma_{b_l}$ is continuously differentiable. However, this leads to a contradiction as $\frac{\Hqr(b)}{\Jqr(b)}+\frac{1}{\phiqr}$ is negative if and only if $b<b^*$ and $b^*>0$ (see Section \ref{section.prelim.SPLP}).
	
	Next, if there is a $d_1$ such that $\Gamma_{b_l}(d_1)=0$ and $\Gamma_{b_l}'(d_1)=0$, we have $b_l+d_1=b^*$ and hence by (\ref{eqt.smooth}) 
	\[
	V_{\kappa}(x;\pi_{b_u',b_l})=\frac{\Hqr(b^*)}{\Jqr(b^*)}\Jqr(b^*-x)-\Hqr(b^*-x),~x\leq b^*.
	\]
	However, the right hand side of the equation is $v_{0}(x)$ by (\ref{eqt.v0rho}) while the left hand side is at most $v_{\kappa}(x)$. This is a contradiction as $v_{\kappa}(x)<v_{0}(x)$ for all $x\geq 0$.

	Therefore, it is impossible to have $\Gamma_{b_l}(d)=0$ and $\Gamma_{b_l}'(d)\leq 0$ at the same time. Hence we have  $\Gamma_{b_l}'(d_1)>0$ whenever $\Gamma_{b_l}(d_1)=0$, which implies that $d_1$ is the only root such that $\Gamma_{b_l}(d_1)=0$ as $\Gamma_{b_l}$ is continuous.
	
	Thus, there is one and only one root for $\Gamma_{b_l}(d)=0$.

	Finally, to show $b_u>b^*$, we observe
	\begin{equation} 
	\Gamma_{b_l}(d)=0\implies \Gamma_{b_l}'(d)> 0\iff \frac{\Hqr(b_u)}{\Jqr(b_u)}+\frac{1}{\phiqr}> 0\iff b_u>b^*.
	\end{equation}
\begin{remark}\label{remark.d.cont.in.bl}
	We should note that the root $d$ is continuous in $b_l$, because (i) the root is unique with strictly positive derivative and (ii) the \corr{formula of} $\Gamma_{b_l}(d)$ defined in (\ref{bubleqt})\corr{, as a function of $(b_l,d)$, is continuous}.
\end{remark}

\section{Proof of Lemma \ref{lemma.kappa0.exist}}\label{Appendix.D}

We first establish the following:
\begin{enumerate}
	\item $b-\frac{\Hqr(b)}{\Jqr(b)}$ as a function of $b$ is strictly increasing on $[b^*,\infty)$, and
	\item $\frac{\partial}{\partial b}V_0(h;\corr{\pi_b})<0$ for $0<h<b$, and
	\item $\lim_{b\rightarrow \infty}{V}_0(h;\pi_b)=0$ for any $h\geq 0$.
\end{enumerate}

\begin{proof}[Proof of Property 1]
Via differentiating with respect to $b$, we get
	\begin{align}
\frac{\partial}{\partial b_u}(b_u-\frac{\Hqr{(b_u)}}{\Jqr(b_u)})=~&
1-\frac{\Hqr^\prime(b_u)}{\Jqr(b_u)}+\frac{\Hqr(b_u)\Jqr^\prime(b_u)}{\Jqr(b_u)^2}\nonumber\\
=~&\frac{\Jqr(b_u)-\frac{\gamma}{\gamma+\delta}\Zq(b_u)}{\Jqr(b_u)}+\frac{\delta}{\gamma+\delta}\frac{\Zqr(b_u)}{\Jqr(b_u)}\frac{\phiqr\Hqr(b_u)}{\Jqr(b_u)}\nonumber\\
=~&\frac{\delta}{\gamma+\delta}\frac{\Zqr(b_u)}{\Jqr(b_u)}\Big(\frac{\phiqr\Hqr(b_u)}{\Jqr(b_u)}+1\Big)\label{eqt.DHJ}\\
>~&0\nonumber
\end{align}
because of (\ref{eqt.smooth.dev.positive}). Hence, the proof is complete.
\end{proof}
\begin{proof}[Proof of Property 2]
Note that $\log \Jqr$ is strictly increasing because $\Jqr$ is strictly increasing, which gives
\begin{equation}
\frac{\partial}{\partial b}\log \Jqr(b)=\frac{\delta}{\gamma+\delta}\phiqr\frac{\Zqr(b)}{\Jqr(b)}>0.\label{eqt.dev.ZqroverJ.geq0}
\end{equation}
This together with (\ref{eqt.DHJ}) imply that for all $0<h<b$ and $b>b^*$
\begin{align*}
&\frac{\partial}{\partial b}V_0(h;\pi_b)\\=~&	\frac{\partial}{\partial b}(\frac{\Hqr(b)}{\Jqr(b)}\Jqr(b-u)-\Hqr(b-u))\\
=~&\frac{\partial}{\partial b}(\frac{\Hqr(b)}{\Jqr(b)})\Jqr(b-u)+\frac{\Hqr(b)}{\Jqr(b)}\Jqr^\prime(b-u)-\Hqr^\prime(b-u)\\
=~&\Big(1-\frac{\delta}{\gamma+\delta}\frac{\Zqr(b)}{\Jqr(b)}\Big(\frac{\phiqr\Hqr(b)}{\Jqr(b)}+1\Big)\Big)\Jqr(b-u)+\frac{\Hqr(b)}{\Jqr(b)}\frac{\delta}{\gamma+\delta}\phiqr\Zqr(b-u)-\frac{\gamma}{\gamma+\delta}\Zq(b-u)\\
=~&\Jqr(b-u)-\frac{\delta}{\gamma+\delta}\Zqr(b-u)+\Big(\frac{\phiqr\Hqr(b)}{\Jqr(b)}+1\Big)\frac{\delta}{\gamma+\delta}\Zqr(b-u)\\
&~-\Big(\frac{\phiqr\Hqr(b)}{\Jqr(b)}+1\Big)\frac{\delta}{\gamma+\delta}\frac{\Zqr(b)}{\Jqr(b)}\Jqr(b-u)-\frac{\gamma}{\gamma+\delta}\Zq(b-u)\\
=~&\frac{\delta}{\gamma+\delta}\Big(\frac{\phiqr\Hqr(b)}{\Jqr(b)}+1\Big)\Big(\Zqr(b-u)-\frac{\Zqr(b)}{\Jqr(b)}\Jqr(b-u)\Big)\\
<~&0,
\end{align*}
provided that the mapping $b\mapsto\Zqr(b)/\Jqr(b)$ is an increasing function in $b$. \corrs{This is in turn true because we have from the definition of $\Jqr$ that, for $x\geq 0$,
$$\frac{\Zqr(x)}{\Jqr(x)}=\frac{\gamma+\delta}{\delta}\Bigg(\frac{1}{1+\frac{\gamma}{\delta}\frac{\Zq(x)}{\Zqr(x)}}\Bigg),$$ which is increasing due to \eqref{eqt.ZqrZq.increasing}.}
\end{proof}
\begin{proof}[Proof of Property 3]
We want to show $\lim_{b\rightarrow \infty}\corr{V_0(h;\pi_b)}=0$ for any $h\geq 0$. The case for $h=0$ is trivial. For $h>0$, Property 2 implies that $\lim_{b\rightarrow \infty}\corr{V_0(h;\pi_b)}$ exists. Using the fact that $V_0(h;\pi_b)\geq 0$, we can conclude that $\lim_{b\rightarrow \infty}V_0(h;\pi_b)$ exists. We shall show the limit is zero. Note that the smoothness condition for $b_l=0$ is equivalent to 
\begin{equation}\label{eqt.D3}
b_u-\frac{\Hqr(b_u)}{\Jqr(b_u)}=\kappa+\frac{\gamma}{\gamma+\delta}\frac{\mu}{\delta}.
\end{equation}
Since the left hand side is a strictly increasing function, for any $b>b^*$, there is a $\kappa$ corresponding to such $b$ such that the value function of the periodic $(b_u,\corr{0})$ strategy $\pi_{b,0}$ is smooth. As a result, we have $V_0(h;\pi_b)=V_\kappa(h;\corr{\pi_{b,0}})$. Furthermore, since $\lim_{b\rightarrow \infty}\frac{\Hqr(b)}{\Jqr(b)}$ exists, we can conclude that 
\[b-\kappa=\frac{\Hqr(b)}{\Jqr(b)}+\frac{\gamma}{\gamma+\delta}\frac{\mu}{\delta}<K\]
for large enough $K>0$. Now, we can write 
\[
V_0(h;\pi_b)=V_\kappa(h;\pi_{b,0})=\mathbb{E}_h[e^{-\delta T^+_b};T_{b_u}^+<\tau_0^-](b-\kappa)<K\mathbb{E}_h[e^{-\delta T^+_{b}};T_{b}^+<\tau_0^-],
\]
where $T^+_b=\min\{T_i:X(T_i)\geq b\}$. It should be clear that the expected value goes to $0$ when $b$ goes to infinity. Therefore, we can conclude $\lim_{b\rightarrow \infty}V_0(h;\pi_b)=0$ for any $h\geq 0$.
\end{proof}

Suppose when the transacion costs are $\kappa>0$, we have $b_l^*=0$, or equivalently $V_0^\prime(0;\pi_b)=V_\kappa^\prime(0;\pi_{b_u,0}^{\kappa,s})\leq 1$, where $b$ satisfies the smoothness condition (\ref{eqt.D3}). This means 
\begin{equation}
\lim_{h\rightarrow 0}\frac{V_0(h;\pi_b)}{h}\leq 1.
\end{equation}

Now, suppose further the fixed transaction costs increases to $\tilde{\kappa}>\kappa$. In view of Property 1, it means an increase in $\kappa$ on the r.h.s. of (\ref{eqt.D3}) must be compensated by an increase in $b_u$ on the r.h.s. of (\ref{eqt.D3}). Hence, we must choose a larger $b_u$ for $b_l=0$ to achieve smoothness. The new upper barrier is denoted as $\tilde{b}>b$. Since for a fixed $h>0$, $V_0(h;\pi_b)$ is decreasing in $b$ (Property 2), we have
\begin{equation}
V_\kappa^\prime(0;\pi_{b_u,0}^{\tilde{\kappa},s})=\lim_{h\rightarrow 0}\frac{V_0(h;\pi_{\tilde{b}})}{h}\leq\lim_{h\rightarrow 0}\frac{V_0(h;\pi_{b})}{h}\leq 1,
\end{equation}
which shows $b_l^*=0$ when the transaction costs are $\tilde{\kappa}$.

In summary, increasing $\kappa$ can never help to avoid $b_l^*=0$. As such, we shall choose
\begin{equation}
\kappa_0=\inf\{\kappa:\mbox{Liquidation-at-first-opportunity stategy is optimal}\},
\end{equation}
with the convention $\inf\emptyset=\infty$.
\end{document}